\newcommand{\bas}[1]{\begin{align*}#1\end{align*}}
\newcommand{\ba}[1]{\begin{align}#1\end{align}}
\newcommand{\spn}{\mathrm{span}}
\newcommand{\ip}[1]{\langle#1\rangle}
\newcommand{\bone}{\mathbf{1}}
\newcommand{\bzero}{\mathbf{0}}
\newcommand{\rd}{\color{red}}
\newcommand{\bk}{\color{black}}
\newcommand{\cC}{\mathcal{C}}
\newcommand{\E}{\mathbb{E}}
\newcommand{\icone}{\bone_{\mathcal{C}_1}}
\newcommand{\ictwo}{\bone_{\mathcal{C}_2}}
\newcommand{\icthree}{\bone_{\mathcal{C}_3}}
\newcommand{\ind}{\mathbbm{1}}
\renewcommand{\P}{\mathbb{P}}
\newcommand{\hpi}{\tilde{\pi}}
\newcommand{\pstar}{p_0}
\newcommand{\qstar}{q_0}
\newcommand{\tstar}{t_0}
\newcommand{\lambdastar}{\lambda_0}
\newcommand{\astar}{a}
\newcommand{\phat}{p^{(1)}}
\newcommand{\qhat}{q^{(1)}}
\newcommand{\var}{\text{Var}}
\newtheorem{theorem}{Theorem}
\newtheorem{proposition}[theorem]{Proposition}
\newtheorem{lemma}[theorem]{Lemma}
\newtheorem{corollary}[theorem]{Corollary}
\newtheorem{remark}{Remark}
\title{When random initializations help: a study of variational inference for community detection }
\author{
  Purnamrita Sarkar\thanks{Equal contribution.} \thanks{All authors contributed equally to the short version of the paper that appeared in NeurIPS 2018.} \\
  Department of Statistics and Data Science\\
    University of Texas, Austin\\
      Austin, TX 78712, USA \\
  \texttt{purna.sarkar@austin.utexas.edu} \\
   \and
 Y. X. Rachel Wang\footnotemark[1]
       \footnotemark[2]\\
  School of Mathematics and Statistics\\
       University of Sydney\\
       NSW 2006, Australia \\
  \texttt{rachel.wang@sydney.edu.au} \\
  \and
  Soumendu Sundar Mukherjee\footnotemark[2]\\
  Interdisciplinary Statistical Research Unit (ISRU)\\
       Indian Statistical Institute, Kolkata\\
       Kolkata 700108, India \\
    \texttt{soumendu041@gmail.com}
}
\date{}
\begin{document}
\maketitle

\begin{abstract}
Variational approximation has been widely used in large-scale Bayesian inference recently, the simplest kind of which involves imposing a mean field assumption to approximate complicated latent structures. Despite the computational scalability of mean field, theoretical studies of its loss function surface and the convergence behavior of iterative updates for optimizing the loss are far from complete. In this paper, we focus on the problem of community detection for a simple two-class Stochastic Blockmodel (SBM) with equal class sizes. Using batch co-ordinate ascent (BCAVI) for updates, we show different convergence behavior with respect to different initializations. When the parameters are known or estimated within a reasonable range and held fixed, we characterize conditions under which an initialization can converge to the ground truth. On the other hand, when the parameters need to be estimated iteratively, a random initialization will converge to an uninformative local optimum.
\end{abstract}

\section{Introduction}\label{sec:intro}
Variational approximation has recently gained a huge momentum in contemporary Bayesian statistics~\citep{Jordan:1999:VM,Blei:2003:LDA,Jaakkola:1999:IMF:308574.308663}. Mean field is the simplest type of variational approximation, and is a popular tool in large scale Bayesian inference. It is particularly useful for problems which involve complicated latent structure, so that direct computation with the likelihood is not feasible. The main idea of variational approximation is to obtain a tractable lower bound on the complete log-likelihood of any model. This is, in fact, akin to the Expectation Maximization algorithm~\citep{dempster1977maximum}, where one obtains a lower bound on the marginal log-likelihood function via the expectation with respect to the conditional distribution of the latent variables under the current estimates of the underlying parameters.  In contrast, for  mean field variational approximation, the lower bound or ELBO is computed using the expectation with respect to a product distribution over the latent variables. 

While there are many advances in developing new mean field type approximation methods for Bayesian models, the theoretical behavior of these algorithms is not well understood. There is one line of theoretical work that studies the asymptotic consistency of variational inference, most of which  focuses on the global optimizer of variational methods under specific models. For example, for Latent Dirichlet Allocation (LDA)~\citep{Blei:2003:LDA} and Gaussian mixture models, it is shown in~\cite{pati2017statistical} that the global optimizer is statistically consistent. \cite{westling2015beyond} connects variational estimators to profile M-estimation, and shows consistency and asymptotic normality of those estimators.
For Stochastic Blockmodels (SBM)~\citep{holland1983stochastic,hofman2008bayesian}, \cite{bickel2013asymptotic} shows that the global optimizer of the variational log-likelihood is consistent and asymptotically normal. 
For more general cases, \cite{wang2017frequentist} proves a variational Bernstein-von Mises theorem, which states that the variational posterior converges to the Kullback-Leibler minimizer of a normal distribution, centered at the truth.

Recently, a lot more effort is being directed towards understanding the statistical convergence behavior of non-convex algorithms in general. For Gaussian mixture models and exponential families with missing data, \cite{wang2004convergence,wang2006convergence} prove local convergence to the true parameters. The same authors also show that the covariance matrix from variational Bayesian approximation for the Gaussian mixture model is ``too small'' compared with that obtained for the maximum likelihood estimator \citep{wang2005inadequacy}. The robustness of variational Bayes estimators is further discussed in~\cite{giordano2017covariances}. For LDA, \cite{awasthi2015some} shows that, with proper initialization, variational inference algorithms converge to the global optimum.

In this paper, we will focus on the community detection problem in networks under SBM. Here the latent structure involves unknown community memberships and as a result, the data likelihood requires summing over all possible community labels. Optimization of the likelihood involves a combinatorial search, and thus is infeasible for large-scale graphs. The mean field approximation has been used popularly for this task \citep{blei2017variational,zhang2017convergence}. In~\cite{bickel2013asymptotic}, it is proved that the global optimum of the mean field approximation to the likelihood behaves optimally in the dense degree regime, where the average expected degree of the network grows faster than the logarithm of the number of vertices. In~\cite{zhang2017convergence}, it is shown that if the initialization of mean field is close enough to the truth then one gets convergence to the truth at the minimax rate.  However, in practice, it is usually not possible to initialize like that unless one uses a pilot algorithm. Most initialization techniques like spectral clustering~\citep{rohe2011spectral,ng2002spectral} will return correct clustering in the dense degree regime, thus rendering the need for mean field updates redundant. 

Indeed, in many practical scenarios, without prior knowledge one simply uses multiple random initializations,  the efficacy of which is model-dependent. In order to understand the behavior of random initializations, one needs to first better understand the landscape of the mean field loss. There are few such studies for non-convex optimization in the literature; notable examples include~\cite{mei:nonconvex,ghorbani2018instability,jin2016local,xu2016global}. In~\cite{xu2016global}, the authors fully characterize the landscape of the likelihood of the equal proportion Gaussian Mixture Model with two components, where the main message is that most random initializations should indeed converge to the ground truth. In contrast, for topic models, it has been established that, for some parameter regimes, variational inference exhibits instability and returns a posterior mean that is uncorrelated with the truth~\cite{ghorbani2018instability}.  In this respect, for network models, there has not been much work characterizing the behavior of the variational loss surface.

 In this article, in the context of a specific SBM, we give a complete characterization of all the critical points and establish the behavior of random initializations 
for batch co-ordinate ascent (BCAVI) updates for mean field likelihood (with known and unknown model parameters).
Our results thus complement those of~\cite{zhang2017theoretical}. For simplicity, we work with equal-sized two-class stochastic blockmodels. When the parameters are known, we show conditions under which random initializations can converge to the ground truth. In particular, we show that centering random initializations around a half ensures convergence happens a good fraction of time, and this property holds even if we only have access to reasonable estimates of true parameters. We also analyze the setting with unknown model parameters, where they are estimated jointly with the community memberships. In this case, we see that indeed, with high probability, a random initialization never converges to the ground truth, thus showing the critical importance of a good initialization for network models.
\section{Setup and preliminaries}\label{sec:setup}
The stochastic blockmodel \texttt{SBM}$(B,Z,\pi)$ is a generative model of networks with community structure on $n$ nodes. Its dynamics is as follows: there are $K$ communities $\{1,\ldots,K\}$ and each node belongs to a single community, where this membership is captured by the rows of the $n \times K$ matrix $Z$, where the $i$th row of $Z$, i.e. $Z_{i,\cdot}$, is the community membership vector of the $i$th node and has a \texttt{Multinomial}$(1;\pi)$ distribution, independently of the other rows. Given the community structure, links between pairs of nodes are determined solely by the block memberships of the nodes in an independent manner. That is, if $A$ denotes the adjacency matrix of the network, then given $Z$, $A_{ij}$ and $A_{kl}$ are independent for $(i,j)\ne (k,l)$, $i<j$, $k<l$, and
\[
\P(A_{ij} = 1 \mid Z) = \P(A_{ij} = 1 \mid Z_{ia} = 1, Z_{jb} = 1) = B_{ab}.
\]
$B=((B_{ab}))$ is called the block (or community) probability matrix. We have the natural restriction that $B$ is symmetric for undirected networks.

The block memberships are hidden variables and one only observes the network in practice. The goal often is to fit an appropriate SBM to learn the community structure, if any, and also estimate the parameters $B$ and $\pi$.

The complete likelihood for the SBM is given by
\begin{equation}
\P(A, Z; B,\pi) = \prod_{i<j}\prod_{a, b} (B_{ab}^{A_{ij}} (1 - B_{ab})^{1 - A_{ij}})^{Z_{ia}Z_{jb}} \prod_i\prod_a \pi_a^{Z_{ia}}.
\end{equation}
As $Z$ is not observable, if we integrate out $Z$, we get the data likelihood
\begin{equation}
\P(A;B,\pi) = \sum_{Z\in\mathcal{Z}} \P(A, Z; B,\pi),
\end{equation}
where $\mathcal{Z}$ is the space of all $n \times K$ matrices with exactly one $1$ in each row.

In principle we can optimize the data likelihood to estimate $B$ and $\pi$. However, $\P(A;B,\pi)$ involves a sum over a complicated large finite set (the cardinality of this set is $K^n$), and hence is not easy to deal with.
A well-known alternative approach is to  optimize the variational log-likelihood \citep{bickel2013asymptotic}, which has a less complicated dependency structure, the simplest of which is mean field log-likelihood (see, e.g., \citep{wainwright2008graphical}). We defer a detailed discussion of the mean field principle in the Appendix. 

For the SBM, the variational log-likelihood with respect to a distribution $\psi$ is given by
\[
	\sum_Z \log \bigg(\frac{\P(A,Z; B,\pi)}{\psi(Z)} \bigg)\psi(Z) = \E_\psi \bigg( \sum_{i < j, a, b}Z_{ia} Z_{jb}(\theta_{ab} A_{ij} - f(\theta_{ab})) \bigg) - \texttt{KL}(\psi || \pi^{\otimes n}),
\]
where $\theta_{ab} = \log \left(\frac{B_{ab}}{1- B_{ab}}\right),$ $f(\theta) = \log (1 + e^\theta)$ and $\pi^{\otimes n}$ denotes the product measure on $\mathcal{Z}$ with the rows of $Z$ being i.i.d. \texttt{Multinomial}$(1;\pi)$. 
A special case of the variational log-likelihood is the mean field log-likelihood (see, e.g., \citep{wainwright2008graphical}), where one approximates $\Psi$ by
\begin{equation}
\Psi_{MF} \equiv \{\psi : \psi(z_1,\ldots,z_n) = \prod_{j=1}^n \psi_j(z_j)\}.
\end{equation}
Define 
$\ell_{MF}(\psi,\theta, \pi) 
= \sum_{i < 
	j, a,b}\psi_{ia}\psi_{jb}(\theta_{ab} A_{ij} - f(\theta_{ab})) - 
\sum_{i}\texttt{KL}(\psi_i||\pi).
$
 For SBM the mean field approximation is equivalent to optimizing $\ell_{MF}(\psi, \theta, \pi)$ as follows:
\begin{align*}
&\max_\psi \ell_{MF}(\psi, \theta, \pi)\\
\mbox{subject to}
&	\sum_a \psi_{ia} = 1, \text{ for all } 1\le i \le n\\
&	\psi_{ia} \ge 0, \text{ for all } 1\le i\le n, 1\le a \le K,
\end{align*}
where each $\psi_{i}$ is a discrete probability distribution over 
$\{1,\ldots,K\}.$ 
\subsection{Mean field updates for a two-parameter two-block SBM}
Consider the stochastic blockmodel with two blocks with prior block probability $\pi, 1 - \pi$ respectively  and block probability matrix $B = (p - q)I + q J$, where $p > q$, $I$ is the identity matrix, and $J = \bone\bone^\top$ is the matrix of all 1's. For simplicity, we will denote $\psi_{i1}$ as $\psi_i$. Then the mean field log-likelihood is
\begin{align*}
\ell(\psi, p, q, \pi) &= \frac{1}{2}\sum_{i,j : i \ne j} [\psi_i (1 - \psi_j) + \psi_j (1 - \psi_i)] [A_{ij} \log\bigg(\frac{q}{1 - q}\bigg) + \log(1 - q)]\\
&\qquad+ \frac{1}{2}\sum_{i,j : i \ne j} [\psi_i \psi_j + (1 - \psi_i) (1 - \psi_j)] [A_{ij} \log\bigg(\frac{p}{1 - p}\bigg) + \log(1 - p)]\\
&\qquad\qquad- \sum_{i} [\log \bigg(\frac{\psi_i}{\pi} \bigg) \psi_i
+ \log \bigg(\frac{1 - \psi_i}{1 - \pi} \bigg) (1 - \psi_i)].
\end{align*}
For simplicity of exposition, we will assume that $\pi$ (which is essentially a prior on the block memberships) is known and equals $1/2$. Let $\mathcal{C}_i, i = 1, 2$ be the two communities. Let $\hpi = \frac{|\mathcal{C}_1|}{n}$. It is clear that $\hpi = \frac{1}{2} + O_P(\frac{1}{\sqrt{n}}).$ Assuming $\hpi = \frac{1}{2}$ from the start will not change our conclusions but make the algebra a lot nicer, which we do henceforth. Now
\begin{align*}
	\frac{\partial \ell}{\partial \psi_i} &= \frac{1}{2} \sum_{j:j \ne i} 2[1 - 2\psi_j] [A_{ij} \log\bigg(\frac{q}{1 - q}\bigg) + \log(1 - q)] \\
	&\qquad + \frac{1}{2} \sum_{j:j \ne i} 2[2\psi_j - 1] [A_{ij} \log\bigg(\frac{p}{1 - p}\bigg) + \log(1 - p)] - \log\bigg( \frac{\psi_i}{1 - \psi_i} \bigg)\\
	&= 4t \sum_{j:j \ne i} (\psi_j - \frac{1}{2})(A_{ij} - \lambda) - \log\bigg( \frac{\psi_i}{1 - \psi_i} \bigg),
\end{align*}
where $t = \frac{1}{2}\log\big(\frac{p(1 - q)}{q(1 - p)}\big)$ and $\lambda = \frac{1}{2t}\log\big(\frac{1 - q}{1 - p}\big).$ Detailed calculations of other first and second order partial derivatives are given in Section~\ref*{sec:deriv_stat_points} of the Appendix.
The co-ordinate ascent (CAVI) updates for $\psi$ are
\[
  \log\frac{\psi_i^{(new)}}{1 - \psi_i^{(new)}} =  4t \sum_{j \ne i} (\psi_j - \frac{1}{2})(A_{ij} - \lambda).
\]
Introducing an intermediate variable $\xi$ for the updates, let $f(x) = \log (\frac{x}{1 - x} )$ and $\xi_i = f(\psi_i)$. Then at iteration $s$, given the current values of $p$ and $q$ for computing $t$ and $\lambda$,
the batch version (BCAVI) of this is
\begin{align*}
  \xi^{(s)} = 4t (A - \lambda(J - I))(\psi^{(s - 1)} - \frac{1}{2}\bone),
\end{align*}
and $\psi^{(s)} = g(\xi^{(s)})$, where $g$ is the sigmoid function $g(x)=1/(1+e^{-x})$.

We will study these updates in two setttings: i) when the true model parameters $\pstar, \qstar$ are known (or estimated and kept fixed), and ii) when the model parameters $\pstar,\qstar$ need to be jointly estimated with $\psi$. The detailed BCAVI updates for each setting will be described in Section~\ref{sec:main}.



\section{Main results}\label{sec:main}
In this section, we state and discuss our main results. All the proofs appear in the Appendix.

 We begin with introducing some notations. In the following, we will see the following vectors repeatedly: $\psi = \frac{1}{2}\bone, \bone, \bzero, \icone, \ictwo.$ Among these, $\bone$ corresponds to the case where every node is assigned by $\psi$ to $\cC_1$, and, similarly, for $\bzero$, to $\cC_2.$ On the other hand, $\bone_{\cC_i}$ are the indicators of the clusters $\cC_i$ and hence correspond to the ground truth community assignment. Finally, $\frac{1}{2}\bone$ corresponds to the solution where a node belong to each community with equal probability. 
 
 The next propositions show some useful inequalities for $t$ and $\lambda$ computed from general $p$ and $q$. 
 
\begin{proposition}\label{prop:bd_t_lambda}
	Suppose $1 > p > q > 0$. Then
	\begin{enumerate}
		\item $\frac{(p - q) (1 + p - q)}{2(1 - q) p} < t < \frac{(p - q) (1 - p + q)}{2(1 - p) q},$ and
		\item $q < \lambda < p$.
	\end{enumerate}
\end{proposition}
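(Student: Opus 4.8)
The plan is to reduce both parts to the single rewriting
\[
2t \;=\; \log\frac{p}{1-p}-\log\frac{q}{1-q}\;=\;\log\frac{p}{q}+\log\frac{1-q}{1-p}\;=\;\int_q^p\frac{dx}{x(1-x)},
\]
where the last equality is the partial fraction $\frac{1}{x(1-x)}=\frac1x+\frac1{1-x}$. The middle (sum-of-logs) form drives part~1 and the integral form drives part~2.

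For part~1, I would apply the elementary inequalities $\frac{u-1}{u}<\log u<u-1$, which are strict for every $u>1$, to each of the two logarithms above; both arguments $p/q$ and $(1-q)/(1-p)$ exceed $1$ since $p>q$. The lower bound $\log u>1-\frac1u$ gives $2t>\frac{p-q}{p}+\frac{p-q}{1-q}$, and putting the two fractions over $p(1-q)$ produces exactly $\frac{(p-q)(1+p-q)}{p(1-q)}$, i.e.\ the stated lower bound on $t$. Symmetrically, $\log u<u-1$ gives $2t<\frac{p-q}{q}+\frac{p-q}{1-p}=\frac{(p-q)(1-p+q)}{q(1-p)}$, the stated upper bound. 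Strictness is automatic because neither argument equals $1$.

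For part~2, the cleanest route is to exhibit $\lambda$ as a weighted average. Since $\lambda=\frac{1}{2t}\log\frac{1-q}{1-p}$ and $\frac{1}{1-x}=\frac{x}{x(1-x)}$,
\[
\lambda=\frac{\int_q^p\frac{dx}{1-x}}{\int_q^p\frac{dx}{x(1-x)}}=\frac{\int_q^p x\,w(x)\,dx}{\int_q^p w(x)\,dx},\qquad w(x)=\frac{1}{x(1-x)}>0.
\]
Thus $\lambda$ is the mean of the identity function over $(q,p)$ against the strictly positive weight $w$, which forces $q<\lambda<p$ because the interval is nondegenerate ($p>q$).

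The computations are short, so the work is essentially conceptual. For part~1 the only subtlety is pairing the two log bounds correctly so the fractions collapse into the numerators $1\pm(p-q)$; for part~2 the step I expect to be least obvious is the partial-fraction manipulation that reveals $\lambda$ as a $w$-weighted average, after which positivity of $w$ finishes the argument with no estimation. Should one prefer to avoid the integral, part~2 can instead be reduced to $\frac{q}{1-q}<\frac{\log((1-q)/(1-p))}{\log(p/q)}<\frac{p}{1-p}$ and handled by the same $\frac{u-1}{u}<\log u<u-1$ inequalities.
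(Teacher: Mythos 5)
Your proof is correct. For part~1 it is in substance identical to the paper's: the paper applies $\frac{a-b}{a}<\log\frac{a}{b}<\frac{a-b}{b}$ to the pairs $(p,q)$ and $(1-q,1-p)$, which is exactly your $\frac{u-1}{u}<\log u<u-1$ with $u=p/q$ and $u=(1-q)/(1-p)$, paired the same way. For part~2, however, you take a genuinely different route. The paper reuses the same log bounds to sandwich $\lambda=\frac{N}{M+N}$ with $N=\log\frac{1-q}{1-p}$, $M=\log\frac{p}{q}$, substituting $N_{\min}=\frac{p-q}{1-q}$, $M_{\max}=\frac{p-q}{q}$ for the lower bound (and symmetrically for the upper) so that the prefactors $p-q$ cancel and the quotients collapse to $q$ and $p$ exactly; this implicitly relies on $\frac{N}{M+N}$ being increasing in $N$ and decreasing in $M$, a monotonicity step the paper leaves unstated. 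Your integral representation $\lambda=\int_q^p x\,w(x)\,dx\big/\int_q^p w(x)\,dx$ with $w(x)=\frac{1}{x(1-x)}$ sidesteps all of that: it is an exact identity rather than a two-sided estimate, strictness of $q<\lambda<p$ is automatic from positivity of $w$ on a nondegenerate interval, and it explains structurally \emph{why} $\lambda$ lies between $q$ and $p$ (it is a weighted mean of the interval). The paper's version has the virtue of staying within the elementary log inequalities that it refines again in Proposition~2, so the two propositions share one toolkit; your version is shorter for part~2 and arguably more illuminating, and your closing remark correctly observes that it degenerates to the paper's sandwich argument if one insists on avoiding the integral.
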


The next proposition refines the separation between $\lambda$ and $p$, $q$, when $p\asymp q \asymp \rho_n, \rho_n \to 0$.

\begin{proposition}
	If $p\asymp q \asymp \rho_n, \rho_n \to 0$ and $p-q=\Omega(\rho_n)$, then
	\begin{align}
	\lambda-q&=\Omega(\rho_n) > 0, \label{eq:lambdalb_sep} \\
	\frac{p+q}{2}-\lambda&=\Omega(\rho_n)>0. \label{eq:lambdaub_sep}
	\end{align}
	\label{prop:lambda_lbub}
\end{proposition}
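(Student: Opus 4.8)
The plan is to reduce both inequalities to elementary one-variable convexity bounds after rewriting $\lambda$ in a convenient form. Set $u = \log(p/q)$ and $v = \log\frac{1-q}{1-p}$, so that $2t = u + v$ and $\lambda = \frac{v}{u+v}$. Under the hypotheses $p \asymp q \asymp \rho_n$ and $p - q = \Omega(\rho_n)$, the ratio $r := p/q$ lies in a fixed compact interval $[1+\delta, R]$ bounded away from $1$: indeed $r - 1 = (p-q)/q$ is bounded below by a positive constant since $p - q = \Omega(\rho_n)$ and $q = \Theta(\rho_n)$, while $r = p/q = \Theta(1)$ is bounded above. Consequently $u = \log r = \Theta(1)$ is bounded away from $0$, while $v = (p - q) + O(\rho_n^2) = \Theta(\rho_n)$ by a Taylor expansion of $\log(1-\cdot)$; in particular $2t = u + v = \Theta(1)$, consistent with Proposition~\ref{prop:bd_t_lambda}. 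These scalings reduce everything to controlling the numerators of $\lambda - q$ and $\frac{p+q}{2} - \lambda$, since both denominators are $\Theta(1)$.

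For \eqref{eq:lambdalb_sep} I would write $\lambda - q = \frac{v(1-q) - qu}{u+v}$ and expand the numerator. Using $v(1-q) = (p-q) + O(\rho_n^2)$ and $qu = q\log r$, the numerator equals $q(r - 1 - \log r) + O(\rho_n^2)$. The function $h(r) = r - 1 - \log r$ satisfies $h(1) = 0$ and $h'(r) = 1 - 1/r > 0$ for $r > 1$, so $h$ is increasing and $h(r) \ge h(1+\delta) > 0$ on the relevant interval. Since $q = \Theta(\rho_n)$, the numerator is at least $c\,\rho_n - C\rho_n^2 = \Omega(\rho_n) > 0$ for large $n$, and dividing by $u + v = \Theta(1)$ gives $\lambda - q = \Omega(\rho_n)$.

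For \eqref{eq:lambdaub_sep} I would similarly write $\frac{p+q}{2} - \lambda = \frac{(p+q)u + (p+q-2)v}{2(u+v)}$. The exact identity $(p+q)u = q(r+1)\log r$ together with $(p+q-2)v = -2(p-q) + O(\rho_n^2) = -2q(r-1) + O(\rho_n^2)$ shows the numerator equals $q\big[(r+1)\log r - 2(r-1)\big] + O(\rho_n^2)$. Setting $k(r) = (r+1)\log r - 2(r-1)$, one checks $k(1) = 0$, $k'(1) = 0$, and $k''(r) = (r-1)/r^2 > 0$ for $r > 1$, so $k$ is strictly increasing and $k(r) \ge k(1+\delta) > 0$. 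As before, this makes the numerator $\Omega(\rho_n)$ and the claim follows after dividing by $2(u+v) = \Theta(1)$.

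The main obstacle is that in both numerators the leading $\Theta(\rho_n)$ contributions nearly cancel --- for instance $(p-q)$ against $q\log(p/q)$ in the first case --- so a crude bound only yields nonnegativity (as already guaranteed by Proposition~\ref{prop:bd_t_lambda}), not the sharper $\Omega(\rho_n)$ lower bound. The delicate point is therefore to extract the exact leading constant $q\,h(r)$ (respectively $q\,k(r)$) and verify, via the strict convexity-type inequalities $h(r) > 0$ and $k(r) > 0$ combined with $r$ being bounded away from $1$, that it is genuinely bounded below by a constant multiple of $\rho_n$. One must also confirm that the $O(\rho_n^2)$ remainders in the Taylor expansions are controlled uniformly, so that they cannot overturn the leading term for large $n$.
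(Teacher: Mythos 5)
Your proof is correct and takes essentially the same route as the paper: both reduce the two claims to the strict positivity of convexity-type functions of the ratio $r = p/q$, which is bounded away from $1$ by the hypothesis $p-q=\Omega(\rho_n)$, with the $(1-p)$, $(1-q)$ factors absorbed into $O(\rho_n^2)$ remainders that cannot overturn the $\Theta(\rho_n)$ leading term. Indeed, your $k(r) = (r+1)\log r - 2(r-1)$ is exactly the paper's $h(x) = (2+x)\log(1+x) - 2x$ with $x = r-1$, and your lower bound via $h(r) = r - 1 - \log r \ge h(1+\delta) > 0$ is the monotonicity repackaging of the paper's quantitative inequality $\log(1+x) \le x - \frac{x^2}{2(1+x)}$, which gives $r - 1 - \log r \ge \frac{(r-1)^2}{2r}$.
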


\subsection{Known \texorpdfstring{$\pstar, \qstar$}{}:}
\label{subsec:knownpq}
In this case, denoting the true model parameters $\pstar, \qstar$ ($\pstar>\qstar$), we assume these parameters are known and thus need only consider the updates for $\psi$. We consider the case where the true $\pstar$, $\qstar$ are of the same order, that is, $\pstar\asymp \qstar \asymp \rho_n$ with $\rho_n$ possibly going to 0. The BCAVI updates are:
\begin{equation}\label{eq:bcavi_sample_known_pq}
\xi^{(s+1)} = 4\tstar (A-\lambdastar(J-I))(\psi^{(s)} - \frac{1}{2}\bone),\\
\end{equation}
where $\tstar$ and $\lambdastar$ are calculated using $\pstar$ and $\qstar$.
In what follows, we will also study the population version of this update which replaces $A$ by $\E(A \mid Z) = ZBZ^\top - \pstar I =: P - \pstar I$. Hence for convenience, denote $M:=P - \pstar I- \lambdastar(J - I)$. 
The population BCAVI updates are
\begin{equation}\label{eq:bcavi_known_pq}
\xi^{(s+1)} = 4\tstar M(\psi^{(s)} - \frac{1}{2}\bone).\\
\end{equation}

The eigendecomposition of $P - \lambdastar J$ will play a crucial role in our analysis. Note that it has rank two and two eigenvalues $e_\pm = n\alpha_\pm$, where $\alpha_+ =  \frac{\pstar + \qstar}{2} - \lambdastar, \alpha_- = \frac{\pstar - \qstar}{2}$, with eigenvectors $\bone$ and $\icone - \ictwo$ respectively. Now it can be easily checked that the eigenvalues of $M$ are $\nu_1 = e_+ -  (\pstar - \lambdastar)$, $\nu_2 = e_- - (\pstar - \lambdastar)$ and $\nu_j = -  (\pstar - \lambdastar)$, $j = 3,\ldots, n$. The eigenvector of $M$ corresponding to $\nu_1$ is $u_1 = \bone$, and the one corresponding to $\nu_2$ is $u_2 = \icone - \ictwo.$

We first present a proposition related to the landscape of the objective function. In the known $\pstar, \qstar$ case, $\frac{1}{2}\bone$ is a saddle point of the population mean field log-likelihood.
\begin{proposition}\label{prop:saddle}
	$\psi = \frac{1}{2}\bone$ is a saddle point of the population mean field log-likelihood when $\pstar$ and $\qstar$ are known, for all $n$ large enough.
\end{proposition}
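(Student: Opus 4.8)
\emph{Proof proposal.} The plan is to show that $\frac{1}{2}\bone$ is an interior stationary point of the population objective and that the Hessian there is indefinite; since a stationary point admitting both a strictly positive and a strictly negative Hessian eigenvalue can be neither a local maximum nor a local minimum, it must be a saddle. First I would verify the first-order condition. In the population version (replacing $A$ by $P - \pstar I$ and recalling $M = P - \pstar I - \lambdastar(J - I)$), the gradient computed above takes the form $\partial \ell / \partial \psi_i = 4\tstar [M(\psi - \frac{1}{2}\bone)]_i - \log\frac{\psi_i}{1-\psi_i}$. At $\psi = \frac{1}{2}\bone$ the vector $\psi - \frac{1}{2}\bone$ vanishes, killing the bilinear term, while $\log\frac{1/2}{1/2} = 0$; hence $\nabla \ell(\frac{1}{2}\bone) = 0$. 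Because $\frac{1}{2}\bone$ lies in the interior of $[0,1]^n$, the ordinary unconstrained Hessian governs the local behavior.

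Next I would compute the Hessian by differentiating the gradient. The bilinear term contributes $4\tstar M$, and the entropy term $-\log\frac{\psi_i}{1-\psi_i}$ contributes the diagonal matrix $-\diag(1/(\psi_i(1-\psi_i)))$ with no off-diagonal entries. Evaluated at $\psi = \frac{1}{2}\bone$ the diagonal part equals $-4I$, so the Hessian is $H = 4\tstar M - 4I = 4(\tstar M - I)$. In particular $H$ is simultaneously diagonalizable with $M$ and has eigenvalues $4(\tstar \nu_j - 1)$, $j = 1,\dots,n$, where $\nu_1,\nu_2$ and $\nu_3 = \cdots = \nu_n = -(\pstar - \lambdastar)$ are the eigenvalues of $M$ recorded in the excerpt, with $u_2 = \icone - \ictwo$ the eigenvector for $\nu_2$.

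It then remains to exhibit eigenvalues of both signs. For the negative direction, Proposition~\ref{prop:bd_t_lambda} gives $\lambdastar < \pstar$ and $\tstar > 0$, so $\nu_j = -(\pstar - \lambdastar) < 0$ for $j \ge 3$ and $4(\tstar \nu_j - 1) < -4 < 0$. For the positive direction I would use $\nu_2 = \frac{n(\pstar - \qstar)}{2} - (\pstar - \lambdastar)$: since $\pstar > \qstar$ this grows linearly in $n$, and the lower bound on $\tstar$ in Proposition~\ref{prop:bd_t_lambda} is of order $(\pstar - \qstar)/\pstar = \Omega(1)$ under $\pstar \asymp \qstar \asymp \rho_n$ with $\pstar - \qstar = \Omega(\rho_n)$, so $\tstar \nu_2 = \Omega(n\rho_n) \to \infty$ once the expected degree satisfies $n\rho_n \to \infty$. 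Hence $4(\tstar \nu_2 - 1) > 0$ for all $n$ large enough, $H$ is indefinite, and $\frac{1}{2}\bone$ is a saddle.

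The routine parts are the gradient and Hessian bookkeeping (carried out in the appendix, Section~\ref*{sec:deriv_stat_points}); the delicate point is the positive eigenvalue, where $\tstar \nu_2$ must be shown to exceed $1$ in the sparse regime. This is exactly where Proposition~\ref{prop:bd_t_lambda} is essential: its lower bound keeps $\tstar$ bounded away from $0$, so that the scaling $\tstar \nu_2 \asymp n\rho_n$ is genuine and the threshold $\tstar \nu_2 > 1$ is crossed for large $n$. Proposition~\ref{prop:lambda_lbub} additionally certifies $\alpha_+ = \frac{\pstar + \qstar}{2} - \lambdastar > 0$, hence $\nu_1 > 0$, though only $\nu_2$ and the repeated eigenvalue $-(\pstar - \lambdastar)$ are needed to conclude indefiniteness.
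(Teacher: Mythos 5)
Your proposal is correct and follows the paper's argument almost step for step: stationarity is read off from the first-order equations \eqref{eq:first_derivative} (the term $M(\psi - \frac{1}{2}\bone)$ and the logit both vanish at $\frac{1}{2}\bone$), the Hessian is $-4I + 4\tstar M$ with eigenvalues $-4 + 4\tstar\nu_j$, and indefiniteness follows from the repeated negative eigenvalue $\nu_j = -(\pstar - \lambdastar) < 0$, $j \ge 3$, together with one eigenvalue of $M$ growing with $n$.

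The one substantive difference is the choice of growing direction. The paper takes $\nu_1 = n\alpha_+ - (\pstar - \lambdastar) = \Theta(n)$ and concludes $h_1 > 0$, which implicitly requires $\alpha_+ > 0$; this is guaranteed by Proposition~\ref{prop:lambda_lbub} in the sparse regime $\pstar \asymp \qstar \asymp \rho_n \to 0$, but can fail for dense parameters (indeed Theorem~\ref{thm:pop_known_pq} explicitly treats the case $\alpha_+ < 0$). You instead use $\nu_2 = \frac{n(\pstar - \qstar)}{2} - (\pstar - \lambdastar)$, whose leading coefficient $\alpha_- = (\pstar - \qstar)/2$ is positive unconditionally, so your positive eigenvalue needs no sign condition on $\alpha_+$ and covers the dense case as well; you correctly note that Proposition~\ref{prop:bd_t_lambda} keeps $\tstar$ bounded below so that $\tstar\nu_2 \asymp n\rho_n \to \infty$ crosses the threshold $1$, which is the same quantitative point the paper needs for $h_1$. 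In this respect your variant is marginally more robust than the paper's, at no extra cost.
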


We next give conditions on the initialization which determine their convergence behavior when using the population BCAVI~\eqref{eq:bcavi_known_pq}. To facilitate our discussion, we will write the BCAVI updates in the eigenvector coordinates of $M$. To this end, define $\zeta^{(s)}_i = \langle \psi^{(s)}, u_i \rangle/\|u_i\|^2 = \langle \psi^{(s)}, u_i \rangle/n$, for $i = 1, 2$. We can then write
\begin{align*}
\psi^{(s)} &= \langle\psi^{(s)}, u_1/\|u_1\| \rangle  u_1/\|u_1\| + \langle\psi^{(s)}, u_2/\|u_2\| \rangle  u_2/\|u_2\| + v^{(s)} = \zeta_1^{(s)} u_1 + \zeta_2^{(s)} u_2 + v^{(s)}.
\end{align*}
So, using \eqref{eq:bcavi_known_pq} in conjunction with the above decomposition, coordinate-wise we have:
\begin{align}\label{eq:main_decomp}
\xi^{(s+1)}_i &= 4\tstar n\bigg((\zeta_1^{(s)} - \frac{1}{2})\alpha_+ +  \sigma_i \zeta_2^{(s)}\alpha_- \bigg) + 4\tstar \nu_3 \bigg( (\zeta_1^{(s)} - \frac{1}{2}) + \sigma_i \zeta_2^{(s)} + v^{(s)}_i \bigg)\\
&=: n a_{\sigma_i}^{(s)} + b_i^{(s)},
\end{align}
where $\sigma_i = 1$, if $i$ is in $\mathcal{C}_1$, and $-1$ otherwise.

\begin{theorem}[Population behavior]\label{thm:pop_known_pq} The limit behavior of the population BCAVI updates~\eqref{eq:bcavi_known_pq} is characterized by the signs of $\alpha_+$ and $a_{\pm1}^{(0)}$, where $\alpha_+=(\pstar+\qstar)/2-\lambdastar$ and $a_{\pm1}^{(s)}$ for iteration $s$ is defined in~\eqref{eq:main_decomp}. Assume that $|na_{\pm1}^{(0)}| \rightarrow \infty$. Define $\ell(\psi^{(0)}) = \ind(a_{+1}^{(0)} > 0) \icone + \ind(a_{-1}^{(0)} > 0) \ictwo$. Then, we have
	$$\frac{\|\psi^{(1)} - \ell(\psi^{(0)})\|^2}{n} = O(\exp(-\Theta(n\min\{|a^{(0)}_{+1}|, |a^{(0)}_{-1}|\}))) = o(1).$$
	We also have for any $s\ge 2$
	\begin{align*}
	\frac{\|\psi^{(s)} - \ell(\psi^{(0)})\|^2}{n} =\begin{cases} O(\exp(-\Theta(n\tstar \alpha_-))),&\mbox{if $a_{+1}^{(0)}a_{-1}^{(0)} < 0$},\\
	O(\exp(-\Theta(n\tstar|\alpha_+|)),&\mbox{if $a_{+1}^{(0)}a_{-1}^{(0)} > 0$, and $\alpha_+ > 0$}.\\
	\end{cases}
	\end{align*}

Finally, if $a_{+1}^{(0)}a_{-1}^{(0)} > 0$ and $\alpha_+ < 0$, then, for any $s \ge 2$, we have
\[
	\min \bigg\{\frac{\|\psi^{(s)} - \bone\|^2}{n}, \frac{\|\psi^{(s)} - \bzero\|^2}{n}\bigg\} = O(\exp(-\Theta(n\tstar|\alpha_+|)).
\]
In fact, in this case, $\psi^{(s)}$ cycles between $\bone$ and $\bzero$, in the sense that it is close to $\bone$ is one iteration, and to $\bzero$ in the next and so on.
\end{theorem}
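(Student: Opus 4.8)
The plan is to exploit the fact that after a single population update every node's new value depends on $i$ only through its community label $\sigma_i$, so that the dynamics collapses onto the two coordinates $\zeta_1^{(s)},\zeta_2^{(s)}$, and then to track a period-two orbit between $\bone$ and $\bzero$. Since $a_{+1}^{(0)}a_{-1}^{(0)}>0$, the two quantities $a_{\pm1}^{(0)}$ share a common sign, so $\ell(\psi^{(0)})$ equals either $\bone$ (both positive) or $\bzero$ (both negative). By the first display of the theorem, $\|\psi^{(1)}-\ell(\psi^{(0)})\|^2/n$ is exponentially small; without loss of generality I take $\psi^{(1)}$ to lie within $O(\exp(-\Theta(n\min\{|a_{+1}^{(0)}|,|a_{-1}^{(0)}|\})))$ of $\bone$, the case $\bzero$ being symmetric.

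Next I would evaluate the update at the two vertices. Using equal class sizes, $\langle\bone,\icone-\ictwo\rangle=0$, so at $\psi=\bone$ one has $\zeta_1=1,\zeta_2=0$ and hence $a_{\pm1}=4\tstar(\tfrac12\alpha_+)=2\tstar\alpha_+$; at $\psi=\bzero$ one has $\zeta_1=\zeta_2=0$ and $a_{\pm1}=-2\tstar\alpha_+$. Since $\tstar>0$ and $\alpha_+<0$, the first is strictly negative and the second strictly positive. Because $\xi_i^{(s+1)}=n a_{\sigma_i}^{(s)}+b_i^{(s)}$ with $b_i^{(s)}$ bounded (as $\nu_3=-(\pstar-\lambdastar)$ does not grow with $n$ and the remaining factors lie in a bounded range), the sign of $a_{\sigma_i}^{(s)}$ dictates whether $\psi_i^{(s+1)}=g(\xi_i^{(s+1)})$ is pushed to $0$ or to $1$, with exponentially small error $g(-|n a|+O(1))=O(\exp(-n|a|+O(1)))$. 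Thus $\bone$ maps to a vector within $O(\exp(-\Theta(n\tstar|\alpha_+|)))$ of $\bzero$, and $\bzero$ maps back to within the same distance of $\bone$: the iterates cycle.

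To make this rigorous for all $s\ge2$ I would run an induction. Assuming $\|\psi^{(s)}-\mathbf{w}\|^2/n=O(\exp(-\Theta(n\tstar|\alpha_+|)))$ for $\mathbf{w}\in\{\bone,\bzero\}$, the inner products $\zeta_1^{(s)}=\langle\psi^{(s)},\bone\rangle/n$ and $\zeta_2^{(s)}=\langle\psi^{(s)},\icone-\ictwo\rangle/n$ differ from their vertex values $(1,0)$ or $(0,0)$ by an exponentially small amount, so $a_{\pm1}^{(s)}$ differs from $\pm2\tstar\alpha_+$ by an exponentially small amount and in particular keeps its sign. Feeding this through $g$ as above yields $\psi^{(s+1)}$ within $O(\exp(-\Theta(n\tstar|\alpha_+|)))$ of the opposite vertex, closing the induction; in particular one of $\|\psi^{(s)}-\bone\|^2/n,\|\psi^{(s)}-\bzero\|^2/n$ equals this small quantity at every step, which is exactly the claimed bound. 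I would also record that $\zeta_2^{(s)}\approx0$ is self-preserving: when $\zeta_2^{(s)}=0$ the two communities receive identical updates, so $\psi^{(s+1)}$ is (nearly) constant across nodes and $\zeta_2^{(s+1)}=0$ again.

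The main obstacle is the error-propagation step: I must verify that the exponentially small deviation of $\psi^{(s)}$ from a constant vector, after passing through the linear map $M$ and the sigmoid, cannot grow enough to flip the sign of $a_{\pm1}^{(s)}$. The resolution is a scale-separation argument — the signal $n\cdot2\tstar|\alpha_+|$ diverges (under the standing assumption that $n\tstar|\alpha_+|\to\infty$, which is also what makes the stated rate $o(1)$), the bounded term $b_i^{(s)}$ is negligible against it, and the perturbations decay geometrically — so the common sign of $a_{\pm1}^{(s)}$ is pinned throughout and the period-two orbit is stable.
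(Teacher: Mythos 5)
Your mechanics for the cycling regime are correct and coincide with the paper's own argument: the same splitting $\xi_i^{(s+1)} = na_{\sigma_i}^{(s)} + b_i^{(s)}$ from \eqref{eq:main_decomp}, the same vertex computations (at $\psi=\bone$, $(\zeta_1,\zeta_2)=(1,0)$ gives $a_{\pm1}=2\tstar\alpha_+$; at $\bzero$, $(0,0)$ gives $-2\tstar\alpha_+$, so $\alpha_+<0$ forces the period-two orbit), and the same propagation of exponentially small errors through $g$ (the paper's $\|\delta^{(s)}\|_\infty$, which it likewise notes stays $o(1)$ across iterations via relations of the form $a_{\pm1}^{(s+1)} = \mp 2\tstar\alpha_+ + O(\|\delta^{(s)}\|_\infty)$). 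Your closing remark that $n\tstar|\alpha_+|\to\infty$ in the standing sparsity regime is also exactly the observation the paper records at the end of its proof.

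The gap is one of coverage: the theorem has three regimes plus the one-step display, and your proposal proves only the last regime. You invoke ``the first display of the theorem'' to place $\psi^{(1)}$ near $\bone$, but that display is part of what is to be proved; it does follow in one line from the mechanism you describe, namely $\psi^{(1)} = g(na_{+1}^{(0)})\icone + g(na_{-1}^{(0)})\ictwo + \delta^{(0)}$ with $\|\delta^{(0)}\|_\infty = O(\exp(-n\min\{|a_{+1}^{(0)}|,|a_{-1}^{(0)}|\}))$, so you should write it out rather than assume it. More seriously, the mixed-sign regime $a_{+1}^{(0)}a_{-1}^{(0)}<0$ --- convergence to the ground truth $\icone$ or $\ictwo$ at rate $O(\exp(-\Theta(n\tstar\alpha_-)))$ --- is nowhere addressed, and it is not a symmetric copy of your vertex calculation: at $\psi=\icone$ one has $(\zeta_1,\zeta_2)=(\tfrac{1}{2},\pm\tfrac{1}{2})$, so the $\alpha_+$ term in \eqref{eq:main_decomp} vanishes and $a_{\pm1}=\pm 2\tstar\alpha_-$; since $\alpha_->0$ unconditionally, these signs are preserved \emph{regardless} of the sign of $\alpha_+$, which is precisely where the $\alpha_-$ rate and the stability of the ground-truth fixed points come from. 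The same-sign regime with $\alpha_+>0$ (fixed points at $\bone$, $\bzero$, rate $n\tstar|\alpha_+|$) is likewise omitted, though there your induction applies verbatim without the sign flip. In short, you have reproduced the paper's Cases 1--2; Cases 3--4 and the $s=1$ display must be supplied before this is a proof of the full statement.
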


\begin{remark}
	We see from Theorem~\ref{thm:pop_known_pq} that, essentially, we have exponential convergence within two iterations.
\end{remark}

Now we turn to the sample behavior of the updates in~\eqref{eq:bcavi_sample_known_pq}.
\begin{theorem}[Sample behavior]\label{thm:samp_known_pq} For all $s \geq 1$, the same conclusion as Theorem~\ref{thm:pop_known_pq} holds for the sample BCAVI updates in~\eqref{eq:bcavi_sample_known_pq} with high probability as long as $n|a^{(0)}_{\pm 1}| \gg \max \{\sqrt{n\rho_n}\|\psi^{(0)} - \frac{1}{2}\|_{\infty}, 1 \}$, $\sqrt{n\rho_n} = \Omega(\log n)$ and $\psi^{(0)}$ is independent of $A$. 
\end{theorem}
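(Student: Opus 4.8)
\section*{Proof proposal for Theorem~\ref{thm:samp_known_pq}}

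The plan is to write each sample update as the corresponding population update plus a noise term driven by the centered adjacency matrix $E := A - \E(A\mid Z)$, and then to show that on a single high-probability event the noise never dominates the signal, so that the sample trajectory shadows the population one analyzed in Theorem~\ref{thm:pop_known_pq}. Since $A - \lambdastar(J - I) = M + E$, the sample update reads $\xi^{(s+1)} = 4\tstar M(\psi^{(s)} - \frac12\bone) + 4\tstar E(\psi^{(s)} - \frac12\bone)$, whose first term is exactly the population operator. The whole argument rests on two concentration facts, which I would union-bound into one good event: (i) the operator-norm bound $\|E\| = O(\sqrt{n\rho_n})$, valid here because $\sqrt{n\rho_n} = \Omega(\log n)$ (standard for sparse SBM adjacency matrices in the regime $n\rho_n \gtrsim \log n$); and (ii) a coordinatewise bound $\|E x\|_\infty = O(\sqrt{n\rho_n}\,\|x\|_\infty)$, up to logarithmic factors, for any vector $x$ independent of $A$ given $Z$, obtained by Bernstein's inequality applied to each $(Ex)_i = \sum_j E_{ij}x_j$ (summands bounded by $\|x\|_\infty$, conditional variance $O(\rho_n\|x\|_2^2)$) followed by a union bound over the $n$ coordinates.

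For the base case $s=1$ I would exploit $\psi^{(0)}\perp A$: conditioning on $\psi^{(0)}$ makes $x = \psi^{(0)} - \frac12\bone$ fixed, so (ii) gives $4\tstar\|E x\|_\infty = O(\tstar\sqrt{n\rho_n}\,\|\psi^{(0)} - \frac12\|_\infty)$. By \eqref{eq:main_decomp} the population coordinate is $na^{(0)}_{\sigma_i} + b^{(0)}_i$ with $|b^{(0)}_i| = O(\tstar\rho_n)$ (since $\nu_3 = -(\pstar - \lambdastar) = O(\rho_n)$), so the separation hypothesis $n|a^{(0)}_{\pm1}|\gg\max\{\sqrt{n\rho_n}\,\|\psi^{(0)} - \frac12\|_\infty, 1\}$ forces $\xi^{(1)}_i$ to share the sign and the order $n|a^{(0)}_{\sigma_i}|$ of its population counterpart. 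Passing through the sigmoid $g$ then places $\psi^{(1)}$ within $\exp(-\Theta(n\min\{|a^{(0)}_{+1}|,|a^{(0)}_{-1}|\}))$ of $\ell(\psi^{(0)})$, i.e.\ the $s=1$ conclusion.

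For $s\ge 2$ the difficulty is that $\psi^{(s-1)}$ now depends on $A$, so (ii) cannot be applied to $\psi^{(s-1)} - \frac12\bone$ directly. The key device is that the population dynamics started from $\ell(\psi^{(0)})$ visits only the finite, $A$-independent set $\{\icone, \ictwo, \bone, \bzero\}$ (constant in the two convergent cases, period-two in the cycling case). I would run an induction: assuming $\psi^{(s-1)} = w^{(s-1)} + \delta^{(s-1)}$ with $w^{(s-1)}$ a reference vector and $\|\delta^{(s-1)}\|_\infty$ exponentially small, decompose $\xi^{(s)} = 4\tstar M(w^{(s-1)} - \frac12\bone) + 4\tstar E(w^{(s-1)} - \frac12\bone) + 4\tstar(M+E)\delta^{(s-1)}$. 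The first term is the population step from a fixed reference vector, producing signal of order $n\tstar\alpha_-$ or $n\tstar|\alpha_+|$ (computed from $\zeta(w^{(s-1)})$); the second is controlled by (ii) on the fixed vector $w^{(s-1)} - \frac12\bone$, union-bounded over the four references, giving $O(\tstar\sqrt{n\rho_n})$; the third is bounded by $4\tstar(\|M\| + \|E\|)\|\delta^{(s-1)}\|_2 = O(n\rho_n)\cdot\sqrt n\,\|\delta^{(s-1)}\|_\infty$, hence exponentially small. Since $n\rho_n \gg \sqrt{n\rho_n}$ in this regime, the signal dominates, the sign of each $\xi^{(s)}_i$ is preserved, and the sigmoid contracts the new error to $\exp(-\Theta(n\tstar\alpha_-))$ or $\exp(-\Theta(n\tstar|\alpha_+|))$, matching the $s\ge2$ rates (cycling case included). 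The error does not accumulate: each sigmoid step resets $\|\delta^{(s)}\|_\infty$ to the fresh exponential bound regardless of the already tiny $\|\delta^{(s-1)}\|_\infty$, so the induction closes uniformly in $s$.

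The main obstacle is exactly this dependence at $s\ge 2$: bounding $E(\psi^{(s-1)} - \frac12\bone)$ when $\psi^{(s-1)}$ is a function of $A$. Splitting off the $A$-independent reference vector and absorbing the data-dependent remainder through the operator-norm bound on $E$ is the crux, and it is what lets a single concentration event (a finite union bound) support all iterations simultaneously. A secondary, bookkeeping-level point is tracking the logarithmic factors in (ii) so that the separation hypothesis together with $\sqrt{n\rho_n} = \Omega(\log n)$ genuinely dominates the Bernstein tail at $s=1$.
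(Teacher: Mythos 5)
Your proposal is correct and follows essentially the same route as the paper's proof: the same signal-plus-noise decomposition with $E = A - \E(A\mid Z)$, the same conditional Bernstein bound plus union bound for the base case $s=1$ (including the $\log n$ factor that the separation hypothesis must absorb), and for $s\ge 2$ the same key device of splitting $\psi^{(s-1)}$ into an $A$-independent reference vector from the finite set $\{\icone,\ictwo,\bone,\bzero,\frac12\bone\}$ plus an exponentially small remainder controlled via the operator-norm bound $\|E\|_{op} = O_P(\sqrt{n\rho_n})$ (the paper's $R_1 + R_2$ decomposition), closed by induction. The only cosmetic difference is that you split $M(w^{(s-1)} - \frac12\bone) + M\delta^{(s-1)}$ explicitly, whereas the paper keeps $M(\psi^{(s-1)} - \frac12\bone)$ intact and invokes the population analysis on it directly; this is immaterial.
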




%
%

From Theorem~\ref{thm:pop_known_pq}, we can calculate lower bounds on the volumes of the basins of attractions of the limit points of the population BCAVI updates. We have the following corollary.
\begin{corollary}\label{cor:volume_pop}
Define the set of initialization points converging to a stationary point $\mathbf{c}$ as
\[
	\mathcal{S}_{\mathbf{c}} := \{v \mid \limsup_{s \rightarrow \infty}n^{-1}\|\psi^{(s)} - \mathbf{c}\|^2 = O(\exp(-\Theta(n\tstar \min \{|\alpha_+|, \alpha_-\}))), \text{ when } \psi^{(0)} = v\}.
\]
Let $\mathfrak{M}$ be some measure on $[0, 1]^n$, absolutely continuous with respect to the Lebesgue measure. Consider the stationary point $\bone$, then
\[
 	\mathfrak{M}(\mathcal{S}_{\bone}) \ge \lim_{\gamma \uparrow 1}\mathfrak{M}(H_+^{\gamma} \cap H_-^{\gamma} \cap [0,1]^n), 
\]
where the half-spaces $H_{\pm}^{\gamma}$ are given as
\[
H_{\pm}^{\gamma} = \big\{x \mid \langle x, \alpha_+ u_1 \pm \alpha_- u_2 \rangle > \frac{n\alpha_+}{2} + \frac{n^{1 - \gamma}}{4t}\big\}.
\]
Similar formulas can be obtained for the other stationary points. 
\end{corollary}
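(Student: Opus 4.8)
The plan is to read the basin-of-attraction condition directly off the population dynamics in Theorem~\ref{thm:pop_known_pq} and to translate the sign conditions on $a_{\pm1}^{(0)}$ into the linear (half-space) conditions on the initializer $\psi^{(0)}$ that define $H_\pm^\gamma$. Recall from \eqref{eq:main_decomp} that $a_{\sigma}^{(0)} = 4\tstar\big((\zeta_1^{(0)} - \tfrac12)\alpha_+ + \sigma \zeta_2^{(0)}\alpha_-\big)$ for $\sigma \in \{\pm 1\}$, where $\zeta_i^{(0)} = \langle \psi^{(0)}, u_i\rangle/n$. Substituting and clearing the factor $1/n$ gives the exact identity
\[
n a_{\pm1}^{(0)} = 4\tstar\Big(\langle \psi^{(0)}, \alpha_+ u_1 \pm \alpha_- u_2\rangle - \tfrac{n\alpha_+}{2}\Big).
\]
Since $\tstar > 0$ (as $\pstar > \qstar$), the event $\{a_{+1}^{(0)} > 0,\ a_{-1}^{(0)} > 0\}$ is exactly $\{\langle \psi^{(0)}, \alpha_+ u_1 + \alpha_- u_2\rangle > \tfrac{n\alpha_+}{2}\} \cap \{\langle \psi^{(0)}, \alpha_+ u_1 - \alpha_- u_2\rangle > \tfrac{n\alpha_+}{2}\}$, and on this event $\ell(\psi^{(0)}) = \icone + \ictwo = \bone$.

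First I would verify that on $H_+^\gamma \cap H_-^\gamma$ the hypothesis $|na_{\pm1}^{(0)}| \to \infty$ of Theorem~\ref{thm:pop_known_pq} is met: the buffer $\tfrac{n^{1-\gamma}}{4\tstar}$ in the definition of $H_\pm^\gamma$ is chosen precisely so that the identity above yields $na_{\pm1}^{(0)} > n^{1-\gamma} \to \infty$ for every fixed $\gamma \in (0,1)$. Next, I would invoke Proposition~\ref{prop:lambda_lbub}, which gives $\alpha_+ = \tfrac{\pstar+\qstar}{2} - \lambdastar = \Omega(\rho_n) > 0$ under the standing assumptions; this places us in the case $a_{+1}^{(0)} a_{-1}^{(0)} > 0$ with $\alpha_+ > 0$ of Theorem~\ref{thm:pop_known_pq}, which guarantees $n^{-1}\|\psi^{(s)} - \bone\|^2 = O(\exp(-\Theta(n\tstar|\alpha_+|)))$ for all $s \ge 2$. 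Because $|\alpha_+| \ge \min\{|\alpha_+|, \alpha_-\}$, this rate is $O(\exp(-\Theta(n\tstar \min\{|\alpha_+|,\alpha_-\})))$, so every such $\psi^{(0)}$ lies in $\mathcal{S}_{\bone}$; hence $H_+^\gamma \cap H_-^\gamma \cap [0,1]^n \subseteq \mathcal{S}_{\bone}$.

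The conclusion then follows immediately: $\mathfrak{M}(H_+^\gamma \cap H_-^\gamma \cap [0,1]^n) \le \mathfrak{M}(\mathcal{S}_{\bone})$ for every $\gamma$, and taking $\gamma \uparrow 1$ gives the stated bound (the half-spaces enlarge as $\gamma \uparrow 1$, since $n^{1-\gamma}$ decreases, so the limit is the sharpest lower bound this argument produces). The formulas for the other stationary points follow identically by changing the sign pattern of $(a_{+1}^{(0)}, a_{-1}^{(0)})$: e.g., $\bzero$ corresponds to both components negative, while $\icone$ and $\ictwo$ correspond to the two mixed sign patterns, each cutting out the analogous pair of half-spaces through the same identity.

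As for obstacles, there is no deep difficulty here, since the corollary is essentially a geometric repackaging of Theorem~\ref{thm:pop_known_pq}. The only points requiring care are (i) getting the linear identity for $na_{\pm1}^{(0)}$ and its sign right, and (ii) the role of the $n^{1-\gamma}$ buffer---it is what reconciles the open half-spaces one would naively write down from the strict sign conditions with the quantitative divergence requirement $|na_{\pm1}^{(0)}| \to \infty$, and it is exactly why the bound is stated as a limit over $\gamma \uparrow 1$ rather than for the bare half-spaces $\langle x, \alpha_+ u_1 \pm \alpha_- u_2\rangle > n\alpha_+/2$.
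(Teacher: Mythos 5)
Your proposal is correct and follows essentially the same route as the paper's proof: both translate the sign/divergence conditions $a_{\pm1}^{(0)} > n^{-\gamma}$ from Theorem~\ref{thm:pop_known_pq} into the half-space conditions defining $H_\pm^\gamma$ via the linear identity $na_{\pm1}^{(0)} = 4\tstar\big(\langle \psi^{(0)}, \alpha_+ u_1 \pm \alpha_- u_2\rangle - \tfrac{n\alpha_+}{2}\big)$, and then take $\gamma \uparrow 1$ by monotonicity of the half-spaces. If anything, your write-up is slightly more careful than the paper's, since you explicitly verify the divergence hypothesis $|na_{\pm1}^{(0)}| \to \infty$, cite Proposition~\ref{prop:lambda_lbub} for $\alpha_+ > 0$ (which the paper assumes without comment), and note that $|\alpha_+| \ge \min\{|\alpha_+|, \alpha_-\}$ so the theorem's rate implies membership in $\mathcal{S}_{\bone}$.
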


For specific measures $\mathfrak{M}$, one can obtain explicit formulas for these volumes. In practice, these are quite easy to calculate by Monte Carlo simulations. 



In fact, using arguments that go into the proof of Theorem~\ref{thm:pop_known_pq}, we can show that in the large $n$ limit, there are only five stationary points of the mean field log-likelihood, namely $\frac{1}{2}\bone, \bone, \bzero, \icone$, and  $\ictwo$.

We can check that the lower bound required on $n|a^{(0)}_{\pm}|$ by Theorem~\ref{thm:samp_known_pq} always holds when we use initializations of the form $\psi_i^{(0)}\stackrel{iid}{\sim} f_{\mu}$, where $f_{\mu}$ is some distribution with support $[0,1]$ and $\mu\neq\frac{1}{2}$. When $\mu=\frac{1}{2}$, $n|a^{(0)}_{\pm}|=\Theta_P(\sqrt{n}\rho_n)$ which does not satisfy the lower bound. In this case, we have the following theorem showing convergence can happen for a good fraction of the random initializations. 
 
\begin{theorem}[Convergence for random initializations]
	\label{thm:halfcase}
	When $\pstar$ and $\qstar$ are known and $\rho_n \to 0$ at a rate such that $\rho_n\sqrt{n}/\log n \to \infty$, initializing with $\psi_i^{(0)}\sim \text{iid Bernoulli}(\frac{1}{2})$ and using the sample BCAVI updates~\eqref{eq:bcavi_sample_known_pq}, with probability at least $1-\frac{\arctan(c_{\ell})-\arctan(c_{\ell}^{-1})}{\pi}$,
	\begin{align*}
	\| \psi^{(s)} - z_0\|_1 & \leq n\exp(-\rho_n n/\sqrt{c_n}) + \frac{c_n}{n\rho_n} \|\psi^{(s-1)} -z_0 \|_1,
	\end{align*} 
	for $s\geq 3$, $c_n\to \infty$ slowly, where $z_0=\icone$ or $\ictwo$. Here 
	\begin{align*}
	c_{\ell} = \frac{(\pstar-\lambdastar)+c(\lambdastar-\qstar)}{c(\pstar-\lambdastar)+(\lambdastar-\qstar)}, \qquad c=\frac{(\lambdastar-\qstar)(1-\epsilon_n)}{(\pstar-\lambdastar)(1+\epsilon_n)}-\eta
	\end{align*}
	$\epsilon_n\to0$ slowly and $\eta>0$.	
\end{theorem}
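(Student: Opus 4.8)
The plan is to split the argument into three phases: (i) identify, in terms of the initialization $\psi^{(0)}$, the ``good cone'' of starting points that get attracted to $z_0 \in \{\icone, \ictwo\}$ rather than to $\bone$, $\bzero$ or $\frac12\bone$; (ii) compute the probability that a Bernoulli$(1/2)$ start lands in this cone; and (iii) show that, on this event, two sample iterations amplify the initially weak signal enough that the geometric contraction toward $z_0$ takes over from $s \ge 3$.

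For phases (i)--(ii) I would work in the eigenbasis $u_1 = \bone$, $u_2 = \icone - \ictwo$ and track the cluster-wise sums $x_1 = \sum_{i \in \cC_1}(\psi^{(0)}_i - \tfrac12)$ and $x_2 = \sum_{i \in \cC_2}(\psi^{(0)}_i - \tfrac12)$. Since the coordinates are i.i.d.\ centered with variance $1/4$ and the classes are balanced, $x_1$ and $x_2$ are independent, each asymptotically $N(0, n/8)$, so $(x_1, x_2)$ is rotationally symmetric and its argument is uniform on the circle (equivalently $x_2/x_1$ is standard Cauchy). Rewriting $a^{(0)}_{\pm 1}$ from \eqref{eq:main_decomp} using $\alpha_+ + \alpha_- = \pstar - \lambdastar$ and $\alpha_- - \alpha_+ = \lambdastar - \qstar$ gives $a^{(0)}_{+1} \propto x_1(\pstar - \lambdastar) - x_2(\lambdastar - \qstar)$ and $a^{(0)}_{-1} \propto x_2(\pstar - \lambdastar) - x_1(\lambdastar - \qstar)$. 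By Theorem~\ref{thm:pop_known_pq} convergence to the ground truth is exactly the event $a^{(0)}_{+1} a^{(0)}_{-1} < 0$, i.e.\ $(x_1, x_2)$ lying \emph{outside} the wedge bounded by the lines $x_2/x_1 = (\lambdastar - \qstar)/(\pstar - \lambdastar)$ and its reciprocal. The uniform measure of that wedge is precisely the Cauchy probability $\frac{\arctan c_\ell - \arctan c_\ell^{-1}}{\pi}$; the constant $c$ is a noise-adjusted (slack) perturbation of the slope $(\lambdastar - \qstar)/(\pstar - \lambdastar)$ by the factors $1 \pm \epsilon_n$ and $-\eta$, which widens the wedge just enough to remain valid when the population thresholds are replaced by their noisy sample counterparts from phase (iii).

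Phase (iii) is the delicate part. With a Bernoulli$(1/2)$ start one has $n|a^{(0)}_{\pm1}| = \Theta_P(\sqrt n\, \rho_n)$, below the threshold of Theorem~\ref{thm:samp_known_pq}: in $\xi^{(1)} = 4\tstar(A - \lambdastar(J-I))(\psi^{(0)} - \frac12\bone)$ the coherent signal is $\Theta(\rho_n \sqrt n)$ per coordinate while the noise $(A-P)(\psi^{(0)} - \frac12\bone)$ is $\Theta(\sqrt{n\rho_n})$, so the noise dominates coordinatewise and the signs of $\psi^{(1)}$ are almost uninformative. What I would establish is that the \emph{aggregate} projection nonetheless survives: writing $\xi^{(1)}_i = S_i + N_i$ and expanding $g(S_i + N_i) \approx g(N_i) + g'(N_i) S_i$, the near-Gaussianity of $N_i$ with standard deviation $\Theta(\sqrt{n\rho_n})$ gives $\E[g'(N_i)] = \Theta(1/\sqrt{n\rho_n})$, so the class-dependent part of $S_i$ yields a bias of order $\sqrt{\rho_n}$ and hence $\zeta_2^{(1)} = \langle \psi^{(1)}, u_2\rangle/n = \Theta(\sqrt{\rho_n})$, dominating $\zeta_2^{(0)} = \Theta(1/\sqrt n)$. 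Feeding this boosted component into the second update makes the per-coordinate signal $\Theta(n\rho_n^{3/2})$ overwhelm the $\Theta(\sqrt{n\rho_n})$ noise (their ratio is $\sqrt n\, \rho_n \to \infty$ under $\rho_n\sqrt n/\log n \to \infty$), so $\psi^{(2)}$ has the correct sign on all but an exponentially small fraction of coordinates and $\|\psi^{(2)} - z_0\|_1$ is already small.

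For $s \ge 3$ I would then run the same contraction as in the proof of Theorem~\ref{thm:samp_known_pq}: decomposing $\psi^{(s-1)} - \frac12\bone = (z_0 - \frac12\bone) + (\psi^{(s-1)} - z_0)$, the first part drives each coordinate of $\xi^{(s)}$ to a value of order $\pm n\rho_n$ with the correct sign, contributing the saturation term $n\exp(-\rho_n n/\sqrt{c_n})$ after the sigmoid, while the spectral bound $\|A - \lambdastar(J-I)\| = O(\sqrt{n\rho_n})$ on the error part produces the contraction factor $\frac{c_n}{n\rho_n}$, with $c_n \to \infty$ slowly absorbing the concentration constants. The hard part will be the loss of independence between $A$ and $\psi^{(1)}$ in the second iteration: because $\psi^{(1)} = g(\text{function of } A)$, the quadratic-form concentration available when $\psi^{(0)} \perp A$ no longer applies directly, and I expect the bulk of the work to be a decoupling / leave-one-out argument showing that $\psi^{(1)}$ is close, in the directions that matter, to an $A$-independent proxy, so that the second-iteration noise can still be controlled.
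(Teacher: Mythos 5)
Your phase (iii) mechanism (sub-threshold noise smoothing the sigmoid, aggregate bias of order $\sqrt{\rho_n}$ in the $u_2$ direction, then signal $\Theta(n\rho_n^{3/2})$ beating noise $\Theta(\sqrt{n\rho_n})$ in iteration two) is exactly the paper's, but phases (i)--(ii) contain a genuine error that makes the plan prove a strictly weaker statement. The basin is \emph{not} exactly $\{a_{+1}^{(0)}a_{-1}^{(0)}<0\}$: with $s_1 \propto a_{+1}^{(0)}$, $s_2 \propto a_{-1}^{(0)}$, the same-sign region is the wedge with boundary slopes $c_u^{\pm 1}$ where $c_u = \frac{\pstar-\lambdastar}{\lambdastar-\qstar}$, so your characterization yields success probability $1-\frac{\arctan(c_u)-\arctan(c_u^{-1})}{\pi}$, strictly smaller than the theorem's $1-\frac{\arctan(c_\ell)-\arctan(c_\ell^{-1})}{\pi}$ (note $1<c_\ell<c_u$). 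The missing probability mass is precisely the paper's Cases 3--4: same-sign initializations with enough asymmetry, $0< s_2 \le c s_1$ (equivalently $c_\ell T_2 \le T_1 \le c_u T_2$ in the cluster-sum coordinates), \emph{do} converge, because the first sub-threshold iteration acts as a Gaussian-smoothed threshold (Berry--Esseen plus a McDiarmid aggregation step, since the $r_i^{(0)}$ share entries of $A$) and maps the asymmetric same-sign pair $(s_1,s_2)$ to an opposite-sign pair $(s_1^{(1)},s_2^{(1)})$ with $|s_{1,2}^{(1)}|=\Omega_P(n\rho_n^{3/2})$. The constants $\epsilon_n,\eta$ in $c$ are slack in these Gaussian-CDF bounds, not sample-versus-population slack on the cone boundary; and your ``widening the wedge'' reading goes the wrong way --- $c_\ell<c_u$ means the excluded wedge is \emph{narrower} than the same-sign wedge, which contradicts your own claim that convergence holds only for opposite signs. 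Your plan has no mechanism for this sign-flip phenomenon, so the appearance of $c_\ell$ is unexplained. A further omission: you need anti-concentration to rule out degenerate starts --- the paper's Littlewood--Offord lemma gives $|s_1|,|s_2|,|s_1-s_2|=\Omega_P(\rho_n\sqrt n)$, which your linearization $\E[g'(N_i)]=\Theta(1/\sqrt{n\rho_n})$ silently presupposes.

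On the dependence between $A$ and $\psi^{(1)}$, the leave-one-out/decoupling machinery you anticipate is unnecessary. The paper splits $r^{(1)} = (A-P)(z_0-\frac{1}{2}\bone) + (A-P)(\psi^{(1)}-z_0)$, controls the first term by Bernstein plus a union bound (only the two fixed candidates $z_0\in\{\icone,\ictwo\}$ arise), and for the $A$-dependent second term simply counts bad coordinates by Chebyshev: $\sum_i \ind\{|(A-P)_{i,\cdot}(\psi^{(1)}-z_0)|\ge \tau\} \le \tau^{-2}(\psi^{(1)}-z_0)^\top (A-P)^2(\psi^{(1)}-z_0) \le \tau^{-2}\|A-P\|_{op}^2\|\psi^{(1)}-z_0\|_2^2$, a bound valid \emph{uniformly} in $\psi^{(1)}$ however it depends on $A$. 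Combined with $\|A-P\|_{op}=O_P(\sqrt{n\rho_n})$, this immediately produces the saturation term $n\exp(-\rho_n n/\sqrt{c_n})$ and the contraction factor $c_n/(n\rho_n)$ for $s\ge 3$; you should adopt this counting trick rather than attempt decoupling.
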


\begin{remark}
Note that the convergence probability can also be written as $\frac{1}{2} + 2\arctan(c_{\ell}^{-1})$ which is strictly larger than $1/2$. Furthermore, as $c$ gets closer to 1, $c_{\ell}$ approaches 1 and the convergence probability approaches 1. 
\end{remark}

The next corollary shows that even if we do not know $\pstar$ and $\qstar$ and only have their estimates, the above convergence still holds as long as the estimates are reasonably close to $\pstar$ and $\qstar$.

\begin{corollary}[Using parameter estimates]
	\label{cor:pq_noise}
	The same conclusion as in Theorem~\ref{thm:halfcase} holds if we replace $\pstar, \qstar$ with some $\hat{p}, \hat{q}$ satisfying
	\begin{enumerate}
	\item $\frac{\pstar+\qstar}{2} > \hat{\lambda}$, 
	\item $\hat{\lambda} - \qstar = \Omega(\rho_n) > 0$,
	\end{enumerate}
where $\hat{\lambda}$ is computed using $\hat{p}$ and $\hat{q}$. 
\end{corollary}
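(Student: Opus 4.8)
The plan is to treat Corollary~\ref{cor:pq_noise} as a perturbation of Theorem~\ref{thm:halfcase}: I would revisit the proof of that theorem and isolate every place where the true parameters $\pstar,\qstar$ enter through $\lambdastar,\tstar$, then check that conditions 1--2 reproduce exactly the structural properties the argument uses. The starting observation is that the data $A$ is still generated from the true \texttt{SBM}$(\pstar,\qstar)$, so $\E(A\mid Z)=P-\pstar I$ is unchanged and every concentration bound on $A-\E(A\mid Z)$ (hence the requirement $\sqrt{n\rho_n}=\Omega(\log n)$) carries over verbatim. Only the scalars driving the recursion change: the sample update becomes $\xi^{(s+1)}=4\hat{t}\,(A-\hat{\lambda}(J-I))(\psi^{(s)}-\tfrac12\bone)$, with effective population matrix $\hat{M}:=P-\pstar I-\hat{\lambda}(J-I)$.

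Next I would record how the eigenstructure changes. Since $Ju_2=0$, the ground-truth direction $u_2=\icone-\ictwo$ is still an eigenvector of $P-\hat{\lambda}J$ with eigenvalue $n\hat{\alpha}_-$, where $\hat{\alpha}_-=\tfrac{\pstar-\qstar}{2}$ is \emph{identical} to the true-parameter case; only the $\bone$-direction eigenvalue changes, to $n\hat{\alpha}_+$ with $\hat{\alpha}_+=\tfrac{\pstar+\qstar}{2}-\hat{\lambda}$. Equivalently, the within- and between-community entries of $\hat{M}$ are $\pstar-\hat{\lambda}$ and $\qstar-\hat{\lambda}$. I would then show that conditions 1--2 pin down the signs and orders of these quantities exactly as Propositions~\ref{prop:bd_t_lambda} and \ref{prop:lambda_lbub} did for $\lambdastar$: condition 1 gives $\hat{\alpha}_+>0$ and, since it forces $\qstar<\hat{\lambda}<\tfrac{\pstar+\qstar}{2}<\pstar$, also $\pstar-\hat{\lambda}>\tfrac{\pstar-\qstar}{2}=\Omega(\rho_n)$ and $\hat{\lambda}-\qstar<\tfrac{\pstar-\qstar}{2}=O(\rho_n)$; condition 2 supplies the matching lower bound $\hat{\lambda}-\qstar=\Omega(\rho_n)>0$ and, with $\hat{\lambda}>\qstar$, the upper bound $\pstar-\hat{\lambda}<\pstar-\qstar=O(\rho_n)$. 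Thus both $\pstar-\hat{\lambda}$ and $\hat{\lambda}-\qstar$ are positive and $\Theta(\rho_n)$, just like $\pstar-\lambdastar,\lambdastar-\qstar$.

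With these in hand the rest should be a substitution argument $\lambdastar\mapsto\hat{\lambda}$, $\tstar\mapsto\hat{t}$, $\alpha_\pm\mapsto\hat{\alpha}_\pm$ in the decomposition~\eqref{eq:main_decomp} and in the two-step dynamics. The distribution of $\psi^{(0)}_i\iid\text{Bernoulli}(\tfrac12)$ does not involve $\hat{p},\hat{q}$, so the initial projections $\zeta_1^{(0)}-\tfrac12$ and $\zeta_2^{(0)}$ still converge jointly to independent centered Gaussians with the same variances; hence the sign-event $\{a_{+1}^{(0)}a_{-1}^{(0)}<0\}$ that controls convergence to the truth has the same Cauchy-type probability, now expressed through the redefined $c,c_\ell$ built from $\pstar-\hat{\lambda}$ and $\hat{\lambda}-\qstar$. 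A key point making the probabilistic constant robust is that $c$, and hence $c_\ell$, is homogeneous of degree zero in $(\pstar-\hat{\lambda},\hat{\lambda}-\qstar)$: it depends only on the ratio $\tfrac{\hat{\lambda}-\qstar}{\pstar-\hat{\lambda}}$, so the scalar $\hat{t}$ cancels, the ratio is $\Theta(1)$, and condition 1 (giving $\hat{\lambda}-\qstar<\pstar-\hat{\lambda}$) keeps it below $1$, so $c_\ell>1$ and the convergence probability stays strictly above $1/2$.

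The main obstacle is precisely this order-matching and bookkeeping step: I would have to confirm the \emph{upper} bounds $\pstar-\hat{\lambda}=O(\rho_n)$ and $\hat{\lambda}-\qstar=O(\rho_n)$ (which come out of conditions 1--2 jointly, not from either alone) and that $\hat{t}=\Theta(1)$ under reasonable estimates, so that the amplification $n\hat{t}\hat{\alpha}_-=\Theta(n\rho_n)$, the contraction factor $\tfrac{c_n}{n\rho_n}$, and the error term $n\exp(-\rho_n n/\sqrt{c_n})$ all keep the same orders as in Theorem~\ref{thm:halfcase}. Once these orders are verified, every invocation of Propositions~\ref{prop:bd_t_lambda}--\ref{prop:lambda_lbub} for $\lambdastar$ in the original proof is replaced by the corresponding property of $\hat{\lambda}$, and the stated bound follows with $\hat{t},\hat{\lambda}$ and the redefined constants.
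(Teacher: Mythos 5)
Your proposal is correct and takes essentially the same route as the paper: its proof likewise just substitutes $\hat{t},\hat{\lambda}$ for $\tstar,\lambdastar$ in the signal--noise decomposition~\eqref{eq:update_knownpq_signalnoise} from the proof of Theorem~\ref{thm:halfcase}, redefines $\hat{s}_1,\hat{s}_2$ with coefficients $\pstar-\hat{\lambda}$ and $\qstar-\hat{\lambda}$, and observes that conditions 1--2 are precisely what the case analysis and the $\arctan$ probability computation require. Your extra bookkeeping --- checking both gaps $\pstar-\hat{\lambda}$ and $\hat{\lambda}-\qstar$ are positive and $\Theta(\rho_n)$, that $\hat{\alpha}_-$ is unchanged, that $c_\ell$ depends only on the ratio of the gaps, and flagging the implicit need for $\hat{t}=\Theta(1)$ (which the paper relegates to Remark~\ref{rem:pq_noise} via $\hat{p},\hat{q}\asymp\rho_n$, $\hat{p}-\hat{q}=\Omega(\rho_n)$) --- is more explicit than the paper's one-line verification but is the same argument.
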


\begin{remark}
	\label{rem:pq_noise}
	\begin{enumerate}
	\item
	In practice, $\hat{p}$, $\hat{q}$ can be estimates depending on $A$, then the statements in Corollary~\ref{cor:pq_noise} hold with high probability. 
	\item 
	When $\hat{p}, \hat{q} \asymp \rho_n$, $\hat{p} - \hat{q} = \Omega(\rho_n)>0$, $\hat{\lambda}$ lies between $(\hat{p}+\hat{q})/2$ and $\hat{q}$ as suggested by Proposition~\ref{prop:lambda_lbub}. The conditions in Corollary~\ref{cor:pq_noise} imply an upper bound on $\hat{p}$ and a lower bound on $\hat{q}$. Similar constraints hold if $\hat{q} - \hat{p} = \Omega(\rho_n)>0$. An example of the estimate regime is shown in Figure~\ref{fig:pqnoise_corollary}, where $\pstar=0.3$, $\qstar=0.1$, and the yellow area contains $\hat{p}$, $\hat{q}$ such that $\frac{\pstar+\qstar}{2} > \hat{\lambda} > \qstar$.
	\end{enumerate}
\end{remark}

\begin{figure}[ht]
	\centering
	\includegraphics[width=0.4\textwidth]{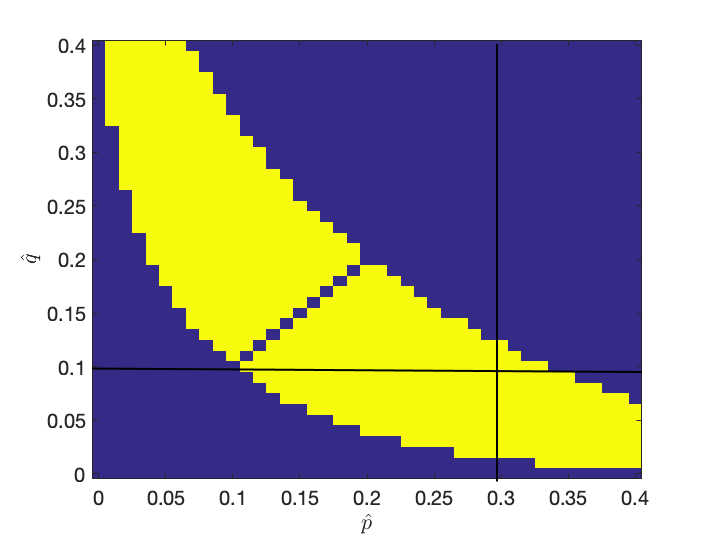} 
	\caption{For $\pstar=0.3$, $\qstar=0.1$, the yellow area shows where $\frac{\pstar+\qstar}{2} > \hat{\lambda} > \qstar$ is satisfied.}
	\label{fig:pqnoise_corollary}
\end{figure}




\subsection{Unknown \texorpdfstring{$\pstar, \qstar$}{}:}
\label{subsec:unknownpq}
In this case, the model parameters $p$ and $q$ are updated jointly with $\psi$. The full BCAVI updates are
\begin{align}\label{eq:bcavi_unknown_pq}
p^{(s)} &= \frac{(\psi^{(s-1)})^\top A \psi^{(s-1)} + (\bone - \psi^{(s-1)})^\top A (\bone - \psi^{(s-1)})}{(\psi^{(s-1)})^\top (J - I) \psi^{(s-1)} + (\bone - \psi^{(s-1)})^\top (J - I) (\bone - \psi^{(s-1)})}, \\ \nonumber
q^{(s)} &= \frac{(\psi^{(s-1)})^\top A (\bone - \psi^{(s-1)})}{(\psi^{(s-1)})^\top (J - I) (\bone - \psi^{(s-1)})}, \\ \nonumber
t^{(s)} &= \frac{1}{2} \log \bigg(\frac{p^{(s)}(1 - q^{(s)})}{q^{(s)}(1 - p^{(s)})}\bigg), \,\, \lambda^{(s)} = \frac{1}{2t^{(s)}} \log \bigg(\frac{1 - q^{(s)}}{1 - p^{(s)}}\bigg), \\ \nonumber
\xi^{(s)} &= 4t^{(s)} (A - \lambda^{(s )}(J - I))(\psi^{(s - 1)} - \frac{1}{2}\bone).
\end{align}
Similar to before, $\pstar\asymp \qstar \asymp \rho_n$ with $\rho_n$ possibly going to 0. In the population version, we would replace $A$ with $\E(A \mid Z) = P - p I.$

In this case with unknown $\pstar$, $\qstar$, our next result shows that $\frac{1}{2}\bone$ changes from a saddle point (Proposition~\ref{prop:saddle}) to a local maximum. 
\begin{proposition}\label{prop:local_max}
	Let $n \ge 2$. Then $(\psi, p, q) = (\frac{1}{2}\bone, \frac{\bone^\top A \bone}{n(n - 1)}, \frac{\bone^\top A \bone}{n(n - 1)})$ is a strict local maximum of the mean field log-likelihood.
\end{proposition}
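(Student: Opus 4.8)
The plan is to verify that the candidate point is an interior critical point of $\ell(\psi,p,q)$ and that the Hessian there is negative definite; since $\psi=\frac12\bone$ lies in the interior of $[0,1]^n$ and the associated $p=q=\bone^\top A\bone/(n(n-1))$ lies in $(0,1)$ (assuming the observed density is neither $0$ nor $1$), negative definiteness of the Hessian at a stationary point immediately yields a strict local maximum of the $C^2$ objective over the $(n+2)$ free variables $(\psi_1,\dots,\psi_n,p,q)$.

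First I would check the first-order conditions. For the $\psi$ coordinates, the gradient formula $\partial\ell/\partial\psi_i = 4t\sum_{j\ne i}(\psi_j-\frac12)(A_{ij}-\lambda) - \log(\psi_i/(1-\psi_i))$ vanishes at $\psi=\frac12\bone$, because every summand carries the factor $\psi_j-\frac12=0$ and because $\log(\frac12/\frac12)=0$. For $p$ and $q$, the BCAVI updates~\eqref{eq:bcavi_unknown_pq} are exactly the stationarity equations $\partial\ell/\partial p=0$ and $\partial\ell/\partial q=0$ for fixed $\psi$; substituting $\psi=\frac12\bone$ into the update formulas gives $p=q=\bone^\top A\bone/(n(n-1))$, so the chosen values are precisely the stationary point in $(p,q)$. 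Hence all $n+2$ partials vanish.

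The crux is the Hessian, and the key simplification is that it is block diagonal at this point. The cross second derivatives $\partial^2\ell/\partial\psi_i\partial p$ and $\partial^2\ell/\partial\psi_i\partial q$ are obtained by differentiating the $\psi$-gradient in $p$ or $q$; every resulting term is again weighted by some $\psi_j-\frac12=0$, so the off-diagonal $\psi$-versus-$(p,q)$ blocks vanish. For the $\psi\psi$ block, the off-diagonal entries equal $4t(A_{ij}-\lambda)=2\big(A_{ij}[\log\tfrac{p}{1-p}-\log\tfrac{q}{1-q}]+\log\tfrac{1-p}{1-q}\big)$, which is identically $0$ when $p=q$, while the diagonal entries are $-1/(\psi_i(1-\psi_i))=-4$; hence $H_{\psi\psi}=-4I$ is negative definite. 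Finally, fixing $\psi=\frac12\bone$ reduces $\ell$ to $\frac14(h(p)+h(q))$ plus a $(p,q)$-independent term, where $h(x)=E\log x+(N-E)\log(1-x)$ with $E=\bone^\top A\bone$ and $N=n(n-1)$; this separates in $p$ and $q$, so the $(p,q)$ block is diagonal with entries $\frac14 h''(p)$ and $\frac14 h''(q)$, and $h''(x)=-E/x^2-(N-E)/(1-x)^2<0$ makes it negative definite.

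Putting the three pieces together, the full Hessian is negative definite, which gives the strict local maximum. The only genuine subtlety --- rather than a real obstacle --- is the bookkeeping that makes the off-diagonal blocks vanish and the recognition that $p=q$ forces $t=0$, which is what collapses the $\psi\psi$ block to $-4I$; the boundary cases where $\bone^\top A\bone\in\{0,n(n-1)\}$ (empty or complete graph) should be excluded or treated separately, since there $p,q$ reach the boundary of $(0,1)$.
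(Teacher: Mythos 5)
Your proposal is correct and follows essentially the same route as the paper: verify stationarity from the first-order equations (where every relevant term carries a factor $\psi_j-\frac{1}{2}=0$), then show the Hessian is block diagonal with $\psi\psi$-block $-4I$ (since $p=q$ makes $t=0$, killing the off-diagonal entries $4t(A_{ij}-\lambda)$) and $(p,q)$-block having the negative entries $-\frac{n(n-1)}{4\hat{a}(1-\hat{a})}$ with $\hat{a}=\frac{\bone^\top A\bone}{n(n-1)}$, exactly as in the paper's displayed Hessian. Your extra bookkeeping — rewriting $4t(A_{ij}-\lambda)$ in a form well defined at $p=q$, and flagging the degenerate cases $\bone^\top A\bone\in\{0,n(n-1)\}$ where $\hat{a}$ hits the boundary of $(0,1)$ — is a minor refinement of hypotheses the paper leaves implicit, not a different argument.
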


Since $\pstar$, $\qstar$ and $\psi$ are unknown and need to be estimated iteratively, we have the following updates for $\phat$ and $\qhat$ given the initialization $\psi^{(0)}$ and show that they can be written in terms of the projection of the initialization in the principal eigenspace of $P$. 

\begin{lemma}
	\label{lem:unknownpupdate}
Let $x=(\psi^{(0)})^T\psi^{(0)}+(\bone-\psi^{(0)})^T(\bone-\psi^{(0)})$ and $y=2(\psi^{(0)})^T(\bone-\psi^{(0)})=n-x$. Projecting $\psi^{(0)}$ onto $u_1$ and $u_2$ and writing $\psi^{(0)}=\zeta_1u_1+\zeta_2u_2+w$, where $w\in \spn\{u_1,u_2\}^\perp$, then
	\begin{align}
	\phat&=\frac{\pstar+\qstar}{2}+\frac{(\pstar-\qstar)(\zeta_2^2-x/2n^2)}{\zeta_1^2+(1-\zeta_1)^2-x/n^2} + O_P(\sqrt{\rho_n}/n),	\nonumber\\
	\qhat&=\frac{\pstar+\qstar}{2}-\frac{(\pstar-\qstar)(\zeta_2^2+y/2n^2)}{2\zeta_1(1-\zeta_1)-y/n^2} + O_P(\sqrt{\rho_n}/n).
	\label{eq:pqupdate}
	\end{align}
\end{lemma}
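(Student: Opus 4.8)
The plan is to evaluate the $s=1$ updates in \eqref{eq:bcavi_unknown_pq} by replacing the random matrix $A$ with its conditional mean and then controlling the fluctuation. Write $A = (P - \pstar I) + E$, where $E := A - \E(A \mid Z)$ has independent, mean-zero entries off the diagonal with variances of order $\rho_n$, and recall that the denominators of $\phat$ and $\qhat$ involve only the deterministic matrix $J - I$, hence carry no noise. The computation then splits into (i) a deterministic quadratic-form calculation with $P - \pstar I$ in place of $A$, and (ii) an error analysis for the bilinear forms in $E$.

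For step (i), I would use the rank-two spectral decomposition of $P = ZBZ^\top$. Since the communities have equal size, $u_1 = \bone$ and $u_2 = \icone - \ictwo$ are orthogonal, each of squared norm $n$, and $P = \frac{\pstar + \qstar}{2}\, u_1 u_1^\top + \frac{\pstar - \qstar}{2}\, u_2 u_2^\top$ (the remaining eigenvalues being $0$). Substituting $\psi^{(0)} = \zeta_1 u_1 + \zeta_2 u_2 + w$ with $w \perp u_1, u_2$, orthogonality gives $u_1^\top \psi^{(0)} = \zeta_1 n$ and $u_2^\top \psi^{(0)} = \zeta_2 n$, so every quadratic/bilinear form in $P$ collapses to an explicit expression in $\zeta_1, \zeta_2$ (the component $w$ drops out because $Pw = 0$). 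The diagonal correction contributes $-\pstar\|\psi^{(0)}\|^2$ and $-\pstar\|\bone - \psi^{(0)}\|^2$ in the $\phat$-numerator and $-\pstar(\psi^{(0)})^\top(\bone - \psi^{(0)})$ in the $\qhat$-numerator; together with the $J$- and $I$-forms in the denominators, these are exactly what assemble into the auxiliary quantities $x$ and $y = n - x$. The final move of (i) is purely algebraic: I would add and subtract $\frac{\pstar+\qstar}{2}$ times the respective denominator inside each numerator, which peels off the leading constant $\frac{\pstar+\qstar}{2}$ and leaves precisely the stated fractions, with numerators proportional to $(\pstar - \qstar)$.

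For step (ii), the remaining obstacle---and the only genuinely probabilistic part---is to show that each bilinear form $v^\top E v'$ appearing in the numerators, with $v, v' \in \{\psi^{(0)}, \bone - \psi^{(0)}\}$, is of order $O_P(n\sqrt{\rho_n})$. Treating $\psi^{(0)}$ as fixed (it is independent of $A$), a direct variance computation gives $\var(v^\top E v' \mid Z) \lesssim \rho_n \sum_{i<j}(v_i v'_j + v_j v'_i)^2 \lesssim \rho_n \|v\|^2\|v'\|^2 = O(n^2 \rho_n)$ since all entries lie in $[0,1]$, so Chebyshev yields the claimed $O_P(n\sqrt{\rho_n})$. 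Dividing by the denominators, which are $\Theta(n^2)$ whenever $\zeta_1$ (and, for $\qhat$, $\zeta_1(1-\zeta_1)$) stays bounded away from its degenerate value---the regime relevant for the random initializations studied later---converts this into the relative error $O_P(\sqrt{\rho_n}/n)$ quoted in the statement. I expect step (ii) to be the main technical point, since it is where one must control both the variance and the non-degeneracy of the denominators; everything in step (i) is bookkeeping once the spectral decomposition is in hand.
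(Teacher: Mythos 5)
Your proposal is correct and follows essentially the same route as the paper's proof: decompose $A = \E(A\mid Z) + E$, collapse the population quadratic forms via the rank-two eigendecomposition $P = \frac{\pstar+\qstar}{2}u_1u_1^\top + \frac{\pstar-\qstar}{2}u_2u_2^\top$, peel off $\frac{\pstar+\qstar}{2}$ times the denominator, and control the noise bilinear forms by a variance bound plus Chebyshev, exactly as the paper does. The only cosmetic difference is your non-degeneracy caveat on $\zeta_1$ for the $\phat$ denominator: the paper notes that $\zeta_1^2+(1-\zeta_1)^2 \ge \frac{1}{2}$ makes that denominator unconditionally at least $n^2/2 - 2n$, so no such condition is needed there (for $\qhat$ the caveat is genuinely implicit in both treatments).
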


	 Since $(\psi^{(0)})^T(\bone-\psi^{(0)})>0$, we have $\zeta_1(1-\zeta_1)\geq \zeta_2^2$.
This gives:
\begin{align}\label{eq:pqbound}
\phat\in \left(\frac{\pstar+\qstar}{2}+O_P(\sqrt{\rho_n}/n),\pstar\right],\qquad
\qhat\in \left[\qstar,\frac{\pstar+\qstar}{2}+O_P(\sqrt{\rho_n}/n)\right).
\end{align}

It is interesting to note that $\phat$ is always smaller than $\qhat$ except when it is $O(\sqrt{\rho_n}/n)$ close to $(\pstar+\qstar)/2$. In that regime, one needs to worry about the sign of $t$ and $\lambda$. In all other regimes, $t,\lambda$ are positive.

Using the update forms in Lemma~\ref{lem:unknownpupdate}, the following result shows that the stationary points of the population mean field log-likelihood lie in the principle eigenspace $\spn\{u_1,u_2\}$ of $P$ in a limiting sense.

\begin{proposition}\label{prop:stationary_characterization}
	Consider the case with unknown $\pstar$, $\qstar$ and $\rho_n\to0$, $n\rho_n\to\infty$. Let $(\psi, \tilde{p}, \tilde{q})$ be a stationary point of the population mean field log-likelihood. If $\psi=\psi_u+\psi_{u^\perp}$, where $\psi_u\in \text{span}\{u_1,u_2\}$ and $\psi_{u^\perp} \perp\text{span}\{u_1,u_2\}$, then $\|\psi_{u^\perp}\|=o(\sqrt{n})$ as $n\to \infty$.
\end{proposition}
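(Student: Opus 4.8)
The plan is to use the characterization of stationary points as fixed points of the population BCAVI updates~\eqref{eq:bcavi_unknown_pq}. At a stationary point $(\psi, \tilde p, \tilde q)$ the $\psi$-update being a fixed point reads $\psi = g(\xi)$ with $\xi = 4\tilde t\, \tilde M(\psi - \tfrac12\bone)$, where $\tilde t, \tilde\lambda$ are computed from $\tilde p, \tilde q$ and $\tilde M = (P - \pstar I) - \tilde\lambda(J - I)$ is the population analogue of $M$. Exactly as in the known-parameter analysis, write $\tilde M = (P - \tilde\lambda J) - (\pstar - \tilde\lambda)I$; then $u_1 = \bone$ and $u_2 = \icone - \ictwo$ are eigenvectors with eigenvalues of order $n\rho_n$, while on $\spn\{u_1,u_2\}^\perp$ the operator $\tilde M$ acts as the scalar $-(\pstar - \tilde\lambda)$, which is only $O(\rho_n)$. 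This spectral gap between the signal subspace and its complement is what drives the result.

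The key step is to project the fixed-point equation onto $\spn\{u_1,u_2\}^\perp$. Writing $\xi = \xi_u + \xi_{u^\perp}$ with $\xi_u \in \spn\{u_1,u_2\}$, and using that $\tfrac12\bone \in \spn\{u_1\}$ while $\tilde M$ is scalar on the complement, I get $\xi_{u^\perp} = -4\tilde t(\pstar - \tilde\lambda)\psi_{u^\perp}$. Now observe that $\xi_u = \beta_1 u_1 + \beta_2 u_2$ is constant on each community, taking the values $\beta_1 \pm \beta_2$; hence $g(\xi_u)$ is also constant on each community and therefore lies in $\spn\{\icone,\ictwo\} = \spn\{u_1,u_2\}$, so its projection onto the complement vanishes. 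Using that $g$ is $\tfrac14$-Lipschitz and that orthogonal projection is a contraction, I obtain $\|\psi_{u^\perp}\| = \|\Pi(g(\xi) - g(\xi_u))\| \le \tfrac14\|\xi - \xi_u\| = \tfrac14\|\xi_{u^\perp}\|$, where $\Pi$ denotes projection onto $\spn\{u_1,u_2\}^\perp$. Combining the two relations yields the self-bounding inequality $\|\psi_{u^\perp}\| \le |\tilde t|\,|\pstar - \tilde\lambda|\,\|\psi_{u^\perp}\|$.

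It then remains to show the multiplier $|\tilde t|\,|\pstar - \tilde\lambda|$ is strictly below $1$ for large $n$. For this I would use the parameter-update fixed-point equations together with $\pstar \asymp \qstar \asymp \rho_n$: the population formulas express $\tilde p, \tilde q$ as ratios whose numerators involve entries of $P - \pstar I$, all $O(\rho_n)$, over nonnegative denominators of order $n^2$; combined with the entrywise bound $\qstar \le P_{ij} \le \pstar$ and $\psi, \bone - \psi \ge 0$ this gives $\tilde p, \tilde q \asymp \rho_n$, whence $\tilde t = \Theta(1)$ and $\tilde\lambda = O(\rho_n)$, so $|\pstar - \tilde\lambda| = O(\rho_n) \to 0$. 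Thus the multiplier tends to $0$ and the inequality forces $\psi_{u^\perp} = 0$, which is in fact stronger than the stated $\|\psi_{u^\perp}\| = o(\sqrt n)$; the degenerate stationary point with $\tilde p = \tilde q$ (so $\tilde t = 0$ and $\psi = \tfrac12\bone$) is covered trivially by the same inequality. I expect the main obstacle to be precisely this control of the stationary parameters $\tilde p, \tilde q$: ruling out that $\tilde q$ collapses much faster than $\rho_n$ (which would blow up $\tilde t$) needs the lower bound $\tilde q \gtrsim \qstar$ coming from $P \succeq \qstar J$ entrywise, and one must separately ensure the denominators in the parameter updates do not degenerate for boundary values of $\psi$.
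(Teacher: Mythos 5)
Your proposal is correct, and it takes a genuinely different route from the paper's proof. The paper argues by cases on $\zeta_2$: if $\zeta_2^2 = \Omega(1)$, the stationary parameters separate, $\tilde t = \Omega(1)$, and writing $\xi = n\tilde a + \tilde b$ with $\tilde a$ blockwise constant and $\|\tilde b\|_\infty = O(\rho_n)$ shows $\psi = g(\xi)$ is within $O(\rho_n)$ per coordinate of the blockwise-constant vector $g(n\tilde a)$, giving $\|\psi_{u^\perp}\| = O(\rho_n \sqrt n)$; if $\zeta_2 = o(1)$ it further subdivides ($\|\psi_{u^\perp}\|^2 = o(n)$ versus $\Omega(n)$, and $n|\tilde a_i| \gg |\tilde b_i|$ versus $n|\tilde a_i| \asymp |\tilde b_i|$), in each subcase concluding $\|\psi_{u^\perp}\| = o(\sqrt n)$. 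You bypass the case analysis entirely: projecting the fixed-point identity $\psi = g(\xi)$ onto $\spn\{u_1,u_2\}^\perp$, where $\tilde M$ acts as the scalar $-(\pstar - \tilde\lambda)$, and using that $g(\xi_u)$ is blockwise constant together with the global $\tfrac14$-Lipschitz bound on $g$, yields the self-bounding inequality $\|\psi_{u^\perp}\| \le |\tilde t|\,|\pstar - \tilde\lambda|\,\|\psi_{u^\perp}\|$. The a priori control you flag as the main obstacle does go through cleanly: at an interior stationary point the stationarity equations \eqref{eq:first_derivative} in $p$ and $q$ express $\tilde p$ and $\tilde q$ as weighted averages of the off-diagonal entries of $P$ with strictly positive weights (so no denominator degeneracy at interior points, where $\psi = g(\xi) \in (0,1)^n$), giving $\tilde p, \tilde q \in [\qstar, \pstar]$; with $\pstar \asymp \qstar \asymp \rho_n$ this yields $|\tilde t| = O(1)$, and Proposition~\ref{prop:bd_t_lambda} places $\tilde\lambda$ between $\tilde q$ and $\tilde p$, so $|\pstar - \tilde\lambda| \le \pstar - \qstar = O(\rho_n)$ and the multiplier tends to $0$. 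One small correction: your parenthetical claim $\tilde t = \Theta(1)$ is false in general --- at stationary points with $\tilde p - \tilde q = o(\rho_n)$ (e.g.\ near $\tfrac12\bone$, where the paper's own Case~2 shows $\tilde t = o(1)$) it fails --- but your argument only needs the upper bound $|\tilde t| = O(1)$, which your estimates do deliver. The payoff of your route is substantial: no case analysis, no use of the hypothesis $n\rho_n \to \infty$, and the strictly stronger conclusion that once $|\tilde t|\,|\pstar - \tilde\lambda| < 1$ (all $n$ large enough), $\psi_{u^\perp} = 0$ exactly, i.e.\ every interior stationary point is exactly blockwise constant --- consistent with, and sharper than, the paper's $o(\sqrt n)$ statement and its remark that only five stationary points survive in the large-$n$ limit.
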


Lemma~\ref{lem:unknownpupdate} basically shows that if $\zeta_2$ is vanishing, then $\phat$ and $\qhat$ concentrates around the average of the conditional expectation matrix, i.e. $(\pstar+\qstar)/2$. The next result shows that if one uses independent and identically distributed initialization, then $\zeta_2$ is indeed vanishing. This is not surprising, since $\zeta_2$ measures correlation with the second eigenvector of $P$ $u_2$ which is basically the $\icone - \ictwo$ vector. 

Consider a simple random initialization, where the entries of $\psi^{(0)}$ are i.i.d with mean $\mu$, we show that the update converges to $\frac{1}{2}\bone$ with small deviations within one update. This shows the futility of random initialization.

\begin{lemma}\label{lem:iidpsi-unknown}
	Consider the initial distribution $\psi^{(0)}_i\stackrel{iid}{\sim} f_\mu$ where $f$ is a distribution supported on $(0,1)$ with mean $\mu$. If $\mu$ is bounded away from $0$ and $1$ and $n \rho_n = \Omega(\log^2 n)$, using the updates in~\eqref{eq:bcavi_unknown_pq}, then $\|\psi^{(1)}-\frac{1}{2}\bone\|_2 = O_P(1)$, $\|\psi^{(s)}-\frac{1}{2}\bone\|_2 = O_P(\sqrt{\rho_n})$ for all $s\geq 2$. 
\end{lemma}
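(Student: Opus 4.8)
The plan is to reduce everything to controlling the magnitude of the pre-sigmoid vector $\xi^{(s)}$ from \eqref{eq:bcavi_unknown_pq}. Since the sigmoid $g$ satisfies $g(0)=\tfrac12$ and $0\le g'\le\tfrac14$, it is $\tfrac14$-Lipschitz, so $\|\psi^{(s)}-\tfrac12\bone\|_2=\|g(\xi^{(s)})-g(\bzero)\|_2\le\tfrac14\|\xi^{(s)}\|_2$ and it suffices to bound $\|\xi^{(s)}\|_2$. First I would record the effect of the i.i.d.\ initialization on the two relevant projections. With $\psi^{(0)}_i\iid f_\mu$ and equal class sizes, $\zeta_1^{(0)}=\langle\psi^{(0)},\bone\rangle/n=\mu+O_P(1/\sqrt n)$, while $\E\zeta_2^{(0)}=0$ and $\var(\zeta_2^{(0)})=\var(f_\mu)/n$, so $\zeta_2^{(0)}=O_P(1/\sqrt n)$. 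The vanishing of $\zeta_2^{(0)}$ is the crux: a random start carries no signal along $u_2=\icone-\ictwo$. Feeding $(\zeta_2^{(0)})^2=O_P(1/n)$, $x/n^2,\,y/n^2=\Theta(1/n)$, and the denominators $\zeta_1^2+(1-\zeta_1)^2\to\mu^2+(1-\mu)^2$ and $2\zeta_1(1-\zeta_1)\to 2\mu(1-\mu)$ (both bounded away from $0$ because $\mu$ is bounded away from $0,1$) into Lemma~\ref{lem:unknownpupdate} gives, with $\bar p:=\tfrac{\pstar+\qstar}{2}$,
\[
\phat=\bar p+O_P(\sqrt{\rho_n}/n),\qquad \qhat=\bar p+O_P(\sqrt{\rho_n}/n).
\]

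Next I would extract the size of $t^{(1)},\lambda^{(1)}$ and, most importantly, a cancellation in the all-ones direction. Writing $\phat=\bar p(1+u)$, $\qhat=\bar p(1+v)$ with $u,v=O_P(1/(n\sqrt{\rho_n}))$, a first-order expansion gives $4t^{(1)}=2(u-v)/(1-\bar p)+\cdots=O_P(1/(n\sqrt{\rho_n}))$ and $4t^{(1)}\lambda^{(1)}=2\log\frac{1-\qhat}{1-\phat}=O_P(\sqrt{\rho_n}/n)$. The key point is that the leading terms of $\bar p\cdot 4t^{(1)}$ and of $4t^{(1)}\lambda^{(1)}$ cancel identically (using $\bar p(1+\gamma)=\gamma$ for $\gamma=\bar p/(1-\bar p)$), so a second-order expansion yields $4t^{(1)}(\bar p-\lambda^{(1)})=O_P(1/n^2)$. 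I would then use the exact decomposition $P=\bar p J+\tfrac{\pstar-\qstar}{2}u_2u_2^\top$ and set $R:=A-P$ to write
\[
A-\lambda^{(1)}(J-I)=(\bar p-\lambda^{(1)})J+\tfrac{\pstar-\qstar}{2}\,u_2u_2^\top+\lambda^{(1)}I+R.
\]

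Applying this to $w^{(0)}:=\psi^{(0)}-\tfrac12\bone$ (with $\|w^{(0)}\|_2=\Theta(\sqrt n)$, $Jw^{(0)}=n(\zeta_1^{(0)}-\tfrac12)\bone$, $u_2u_2^\top w^{(0)}=n\zeta_2^{(0)}u_2$) and multiplying by $4t^{(1)}$, I would bound $\|\xi^{(1)}\|_2$ term by term: the $J$-term is $O_P(1/n^2)\cdot n\cdot\sqrt n=O_P(1/\sqrt n)$; the $u_2$-term is $O_P(\sqrt{\rho_n})$ precisely because $\zeta_2^{(0)}=O_P(1/\sqrt n)$; the $\lambda^{(1)}I$-term is $O_P(\sqrt{\rho_n/n})$; and the noise term is $4|t^{(1)}|\,\|R\|_{\mathrm{op}}\|w^{(0)}\|_2=O_P(1/(n\sqrt{\rho_n}))\cdot O_P(\sqrt{n\rho_n})\cdot\Theta(\sqrt n)=O_P(1)$, using the standard spectral bound $\|R\|_{\mathrm{op}}=O_P(\sqrt{n\rho_n})$ valid under $n\rho_n=\Omega(\log^2 n)$. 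Summing gives $\|\xi^{(1)}\|_2=O_P(1)$, hence the first claim; note the only non-vanishing contribution is pure noise (in the population, $R=\bzero$ and $\psi^{(1)}=\tfrac12\bone$ exactly). For $s\ge 2$ I would induct: if $\|\psi^{(s-1)}-\tfrac12\bone\|_2=O_P(r_{s-1})$ then $\zeta_1^{(s-1)}-\tfrac12$ and $\zeta_2^{(s-1)}$ are $O_P(r_{s-1}/\sqrt n)$ by Cauchy--Schwarz, and since $\psi^{(s-1)}\approx\tfrac12\bone$ forces $x^{(s-1)}\approx n/2$, the same estimates $p^{(s)},q^{(s)}=\bar p+O_P(\sqrt{\rho_n}/n)$, $t^{(s)}=O_P(1/(n\sqrt{\rho_n}))$ and the same cancellation hold. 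The term-by-term bounds then give $\|\xi^{(s)}\|_2=O_P(\max\{\sqrt{\rho_n},1/\sqrt n\}\,r_{s-1})=O_P(\sqrt{\rho_n}\,r_{s-1})$ (as $n\rho_n\to\infty$ makes $1/\sqrt n=O(\sqrt{\rho_n})$); with $r_1=O_P(1)$ this yields $r_2=O_P(\sqrt{\rho_n})$ and $r_s=O_P(\sqrt{\rho_n})$ for all $s\ge 2$, in fact with geometric decay.

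The hard part will be the delicate cancellation $4t^{(1)}(\bar p-\lambda^{(1)})=O_P(1/n^2)$: a naive bound only gives $O_P(\sqrt{\rho_n}/n)$, which would blow the $J$-term up to $O_P(\sqrt{n\rho_n})$ and destroy the result, so one genuinely must expand $t$ and $t\lambda$ to second order and exploit the algebraic identity that kills the leading order. A secondary subtlety is that for $s=1$ the $u_2$-term requires the sharp i.i.d.\ concentration $\zeta_2^{(0)}=O_P(1/\sqrt n)$ (the crude Cauchy--Schwarz bound $O_P(1)$ would be too weak since $\|w^{(0)}\|_2=\Theta(\sqrt n)$), whereas for $s\ge 2$ the Cauchy--Schwarz bound on $\zeta_2^{(s-1)}$ already suffices because $\psi^{(s-1)}$ is near $\tfrac12\bone$; one must also check $t^{(s)}$ never blows up, which follows from $|p^{(s)}-q^{(s)}|=O_P(\sqrt{\rho_n}/n)$ holding with high probability under the stated degree regime.
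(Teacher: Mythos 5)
Your proposal is correct and follows essentially the same route as the paper's proof: the i.i.d.\ initialization kills the $u_2$-projection ($\zeta_2^{(0)}=O_P(n^{-1/2})$), Lemma~\ref{lem:unknownpupdate} collapses $\hat p,\hat q$ to $\frac{\pstar+\qstar}{2}+O_P(\sqrt{\rho_n}/n)$, whence $t^{(s)}=O_P\big(\frac{1}{n\sqrt{\rho_n}}\big)$ and $4t^{(s)}\big|\frac{\pstar+\qstar}{2}-\lambda^{(s)}\big|=O_P(n^{-2})$, and the spectral split of $A-\lambda(J-I)$ together with $\|A-P\|_{op}=O_P(\sqrt{n\rho_n})$ drives the induction to $O_P(1)$ at $s=1$ and $O_P(\sqrt{\rho_n})$ thereafter. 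The only differences are cosmetic: where you obtain the crucial smallness of $4t(\bar p-\lambda)$ by a second-order Taylor cancellation, the paper reads $\big|\frac{\pstar+\qstar}{2}-\lambda^{(1)}\big|\le\max(|\epsilon_1|,|\epsilon_2|)$ directly off the sandwich bounds \eqref{eq:lambdaub}--\eqref{eq:lambdalb} and multiplies by the $t$-bound \eqref{eq:t}; where you use the $\tfrac14$-Lipschitz property of $g$ plus Cauchy--Schwarz, the paper tracks $\zeta_1,\zeta_2$ via $g(\xi)=\tfrac12+\tfrac14\xi+O(\xi^3)$; and for $s\ge2$ you should state explicitly (as the paper does) that the $O_P(\sqrt{\rho_n}/n)$ error in $p^{(s)},q^{(s)}$ is rederived via operator-norm bounds on quadratic forms rather than the Chebyshev argument of Lemma~\ref{lem:unknownpupdate}, since $\psi^{(s-1)}$ is no longer independent of $A$ --- a step your own machinery already supplies.
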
	


Perhaps, it is also instructive to analyze the case where the initialization is in fact correlated with the truth, i.e. $E[\psi^{(0)}_i]=\mu_{\sigma_i}$. To this end, we will consider the following initialization scheme.
\begin{lemma}\label{lem:goodinitunknown}
Consider an initial $\psi^{(0)}$ such that 
\begin{align}\label{eq:zeta-eqmean}
\zeta_1&=\frac{(\psi^{(0)})^T  \bone}{n}=\frac{\mu_1+\mu_2}{2}+O_P(1/\sqrt{n}),\nonumber\\
\zeta_2&=\frac{(\psi^{(0)})^T u_2}{n}=\frac{\mu_1-\mu_2}{2}+O_P(1/\sqrt{n}).
\end{align} 

If $\mu_1, \mu_2$ are bounded away from 0 and 1 and satisfy 
\begin{align}\label{eq:museparation}
|\mu_1-\mu_2| >\max\left(2|\mu_1+\mu_2-1|+O_P\left(\frac{\sqrt{\rho_n\log^2 n/n}}{\pstar-\qstar}\right),\left(\frac{\rho_n\log n}{n(\pstar-\qstar)^2}\right)^{1/3}\right) ,
\end{align} \bk
and $n\rho_n =\Omega(\log^2 n)$, then $\psi^{(1)} = \icone + O_P(\exp(-\Omega(\log n)))$ or $ \ictwo + O_P(\exp(-\Omega(\log n)))$, where the error term is uniform for all the coordinates.
\end{lemma}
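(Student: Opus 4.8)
The plan is to run a single BCAVI sweep~\eqref{eq:bcavi_unknown_pq} from the correlated start $\psi^{(0)}$ and show it essentially lands on the ground truth. Throughout I take $\mu_1 > \mu_2$ (so the target is $\icone$); the case $\mu_1 < \mu_2$ follows by symmetry and yields $\ictwo$. Writing $\psi^{(0)} = \zeta_1 u_1 + \zeta_2 u_2 + w$ as in Lemma~\ref{lem:unknownpupdate}, the hypotheses~\eqref{eq:zeta-eqmean} give $\zeta_1 - \tfrac12 = \tfrac{\mu_1 + \mu_2 - 1}{2} + O_P(n^{-1/2})$ and $\zeta_2 = \tfrac{\mu_1 - \mu_2}{2} + O_P(n^{-1/2}) > 0$.

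\textbf{Step 1: locating the parameter updates.} First I would substitute the hypothesized $\zeta_1, \zeta_2$ into Lemma~\ref{lem:unknownpupdate}. Since $x/n^2, y/n^2 = O(1/n)$ while $\zeta_2^2 \asymp (\mu_1 - \mu_2)^2 \gg 1/n$, this yields $\phat - \qhat \asymp (\pstar - \qstar)\zeta_2^2 > 0$ and $\phat, \qhat \asymp \rho_n$ by~\eqref{eq:pqbound}. Hence $t^{(1)} > 0$ with $t^{(1)} \asymp (\phat - \qhat)/\rho_n \asymp (\pstar - \qstar)\zeta_2^2/\rho_n$, and by Proposition~\ref{prop:bd_t_lambda}(2) applied to $\phat, \qhat$ we have $\qhat < \lambda^{(1)} < \phat$. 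Setting $\tilde\alpha_+ := \tfrac{\pstar + \qstar}{2} - \lambda^{(1)}$, this last containment gives the crucial bound $|\tilde\alpha_+| < \tfrac{\pstar - \qstar}{2} = \alpha_-$.

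\textbf{Step 2: diagonalizing the update.} Writing $W := A - \E(A \mid Z) = A - (P - \pstar I)$, I decompose $A - \lambda^{(1)}(J - I) = (P - \lambda^{(1)}J) - (\pstar - \lambda^{(1)})I + W$. As in the known-parameter analysis, $u_1 = \bone$ and $u_2 = \icone - \ictwo$ are eigenvectors of $P - \lambda^{(1)}J$ with eigenvalues $n\tilde\alpha_+$ and $n\alpha_-$, while $\spn\{u_1, u_2\}^\perp$ lies in its kernel. Applying this to $\psi^{(0)} - \tfrac12\bone = (\zeta_1 - \tfrac12)u_1 + \zeta_2 u_2 + w$, the $i$-th coordinate of the update becomes
\[
\xi^{(1)}_i = 4 t^{(1)}\Big[\, n\tilde\alpha_+\big(\zeta_1 - \tfrac12\big) + \sigma_i\, n\alpha_- \zeta_2 - (\pstar - \lambda^{(1)})\big(\psi^{(0)}_i - \tfrac12\big) + \big(W(\psi^{(0)} - \tfrac12\bone)\big)_i \,\Big],
\]
where $\sigma_i = +1$ on $\cC_1$ and $-1$ on $\cC_2$.

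\textbf{Step 3: sign and magnitude per coordinate.} The discriminative term $\sigma_i n\alpha_- \zeta_2$ carries the sign $\sigma_i$ and has magnitude $\asymp n(\pstar - \qstar)|\mu_1 - \mu_2|$. I would show the three remaining terms are dominated by it. The common bias $n\tilde\alpha_+(\zeta_1 - \tfrac12)$ is controlled using $|\tilde\alpha_+| < \alpha_-$ together with the first part of~\eqref{eq:museparation}, $|\mu_1 - \mu_2| > 2|\mu_1 + \mu_2 - 1| + \cdots$, whose factor $2$ leaves a definite margin so the net signal stays $\asymp n\alpha_-\zeta_2$. The identity contribution is $O(\rho_n)$, negligible against $n(\pstar-\qstar)|\mu_1-\mu_2|$. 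For the noise I would condition on $\psi^{(0)} \perp A$ and apply Bernstein plus a union bound to get $\|W(\psi^{(0)} - \tfrac12\bone)\|_\infty = O_P(\sqrt{n\rho_n}\log n)$ uniformly; dividing by $n\alpha_-$ reproduces exactly the $O_P\big(\sqrt{\rho_n \log^2 n / n}/(\pstar - \qstar)\big)$ correction in~\eqref{eq:museparation}, so the correct sign is preserved at every coordinate. Consequently $\sigma_i \xi^{(1)}_i \gtrsim t^{(1)} n\alpha_-|\zeta_2| \asymp n(\pstar - \qstar)^2|\mu_1 - \mu_2|^3/\rho_n$, and the second (cube-root) part of~\eqref{eq:museparation} forces this to be $\Omega(\log n)$. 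Feeding it into the sigmoid $g$ gives $\psi^{(1)}_i = (\icone)_i + O(\exp(-\Omega(\log n)))$ uniformly in $i$, which is the claim.

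\textbf{Main obstacle.} The delicate regime is $|\mu_1 - \mu_2| \to 0$, where the signal is weak and both separation thresholds are simultaneously active. The hard part is the \emph{joint} control of the data-dependent quantities $t^{(1)}, \lambda^{(1)}$ (through Lemma~\ref{lem:unknownpupdate}) and of the uniform-over-coordinates fluctuation of $W(\psi^{(0)} - \tfrac12\bone)$: the two parts of~\eqref{eq:museparation} are precisely calibrated so that the first guarantees the correct sign against both the $u_1$-bias and the noise, while the cube-root second part guarantees an $\Omega(\log n)$ margin, which is exactly what drives the exponentially small coordinate-wise error.
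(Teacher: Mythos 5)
Your proposal is correct and follows essentially the same route as the paper's proof: it uses Lemma~\ref{lem:unknownpupdate} to pin down $\phat,\qhat$, deduces $t^{(1)}=\Theta((\pstar-\qstar)\zeta_2^2/\rho_n)$ and $|(\pstar+\qstar)/2-\lambda^{(1)}|\le(\pstar-\qstar)/2$ (the paper derives the latter from the explicit logarithmic inequalities \eqref{eq:log_ratio_bound}--\eqref{eq:lambdalb}, where you invoke Proposition~\ref{prop:bd_t_lambda} applied to $\phat,\qhat$ --- the same fact), and then decomposes $\xi^{(1)}_i$ exactly as the paper's $n(\kappa_1+\sigma_i\kappa_2)+b_i^{(0)}+nr_i^{(0)}$, with the two parts of \eqref{eq:museparation} playing their intended roles (sign via the factor-$2$ margin plus the $O_P(\sqrt{n\rho_n}\log n)$ Bernstein bound on the noise, and the $\Omega(\log n)$ magnitude $\Theta(|\mu_1-\mu_2|^3 n(\pstar-\qstar)^2/\rho_n)$ via the cube-root condition). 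Your direct dominance argument $|\kappa_2|>|\kappa_1|+\max_i|r_i^{(0)}|$ subsumes the paper's product-sign check $(\kappa_1+\kappa_2+r_i^{(0)})(\kappa_1-\kappa_2+r_j^{(0)})<0$, so the differences are purely cosmetic.
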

\begin{remark}
	
\begin{enumerate}
	\item The lemma states that provided the separation between $\pstar$ and $\qstar$ does not vanish too fast, if the initial $\psi^{(0)}$ is centered around two slightly different means, e.g., $\mu_1=1/2+\epsilon_n$ and $\mu_2=1/2-\epsilon_n$ for some constant $\epsilon_n\to 0$, then we converge to the truth within one iteration. 
	
	\item 
	
	Now we have $\|\psi^{(1)} - z_0\|_1 = o_P(n)$, which satisfies the condition in~\cite{zhang2017theoretical} ($\|\psi^{(1)} - z_0\|_1 \leq c_0 n$ for some constant $c_0$ small enough with high probability). The rest of their regularity conditions can also be checked. Thus for $s> 1$,
	\begin{align*}
	\|\psi^{(s)} - z_0\|_1 \leq n\exp(-C_1n \rho_n) + \frac{C_2}{\sqrt{n\rho_n}} \|\psi^{(s-1)} - z_0\|_1 
	\end{align*} 
	with high probability. 

\end{enumerate}
\end{remark}

\section{Numerical results}\label{sec:simu}
In Figure~\ref{fig:region}-(a), we have generated a network from an SBM with parameters $\pstar = 0.4, \qstar = 0.025$, and two equal sized blocks of $100$ nodes each. We generate $5000$ initializations $\psi^{(0)}$ from $\mathrm{Beta}(\alpha,\beta)^{\otimes n}$ (for four sets of $\alpha$ and $\beta$) and map them to  $a_{\pm 1}^{(0)}$. We perform sample BCAVI updates on $\psi^{(0)}$ with known $\pstar, \qstar$ and color the points in the $a_{\pm 1}^{(0)}$ co-ordinates according the limit points they have converged to. In this case, $\alpha_+ > 0$, hence based on Theorems~\ref{thm:pop_known_pq} and~\ref{thm:samp_known_pq}, we expect points with $a_{+1}^{(0)}a_{-1}^{(0)} < 0$ to converge to the ground truth (colored green or magenta) and those with $a_{+1}^{(0)}a_{-1}^{(0)} > 0$ to converge to $\bzero	$ or $\bone$. As expected, points falling in the center of the first and third quadrants have converged to $\bzero$ or $\bone$. The points converging to the ground truth lie more toward the boundaries but mostly remain in the same quadrants, suggesting possible perturbations arising from the sample noise and small network size. We see that this issue is alleviated when we increase $n$. 


\begin{figure}[ht]
	\centering
	\begin{tabular}{cc}
	\includegraphics[width=0.51\textwidth]{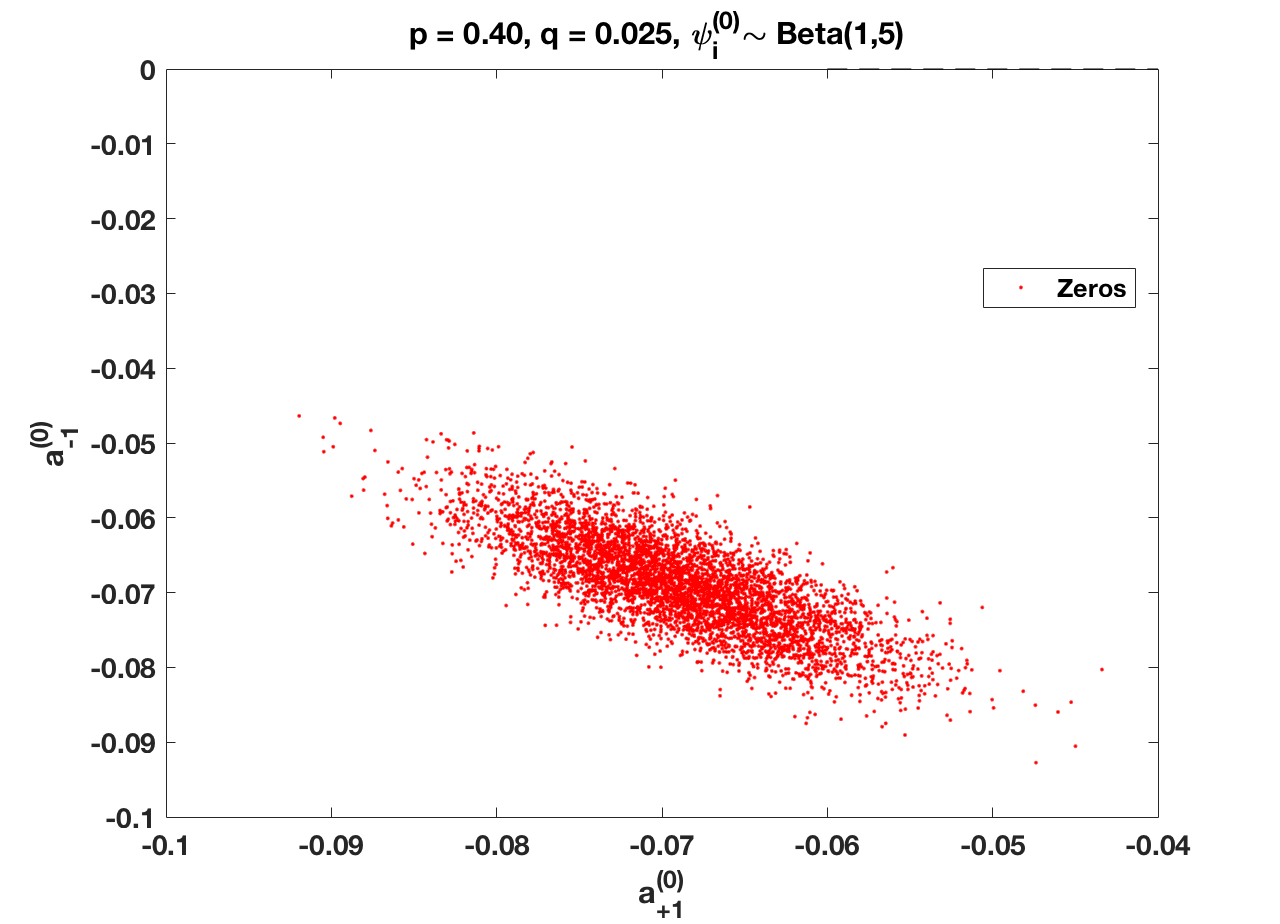} & \includegraphics[width=0.5\textwidth]{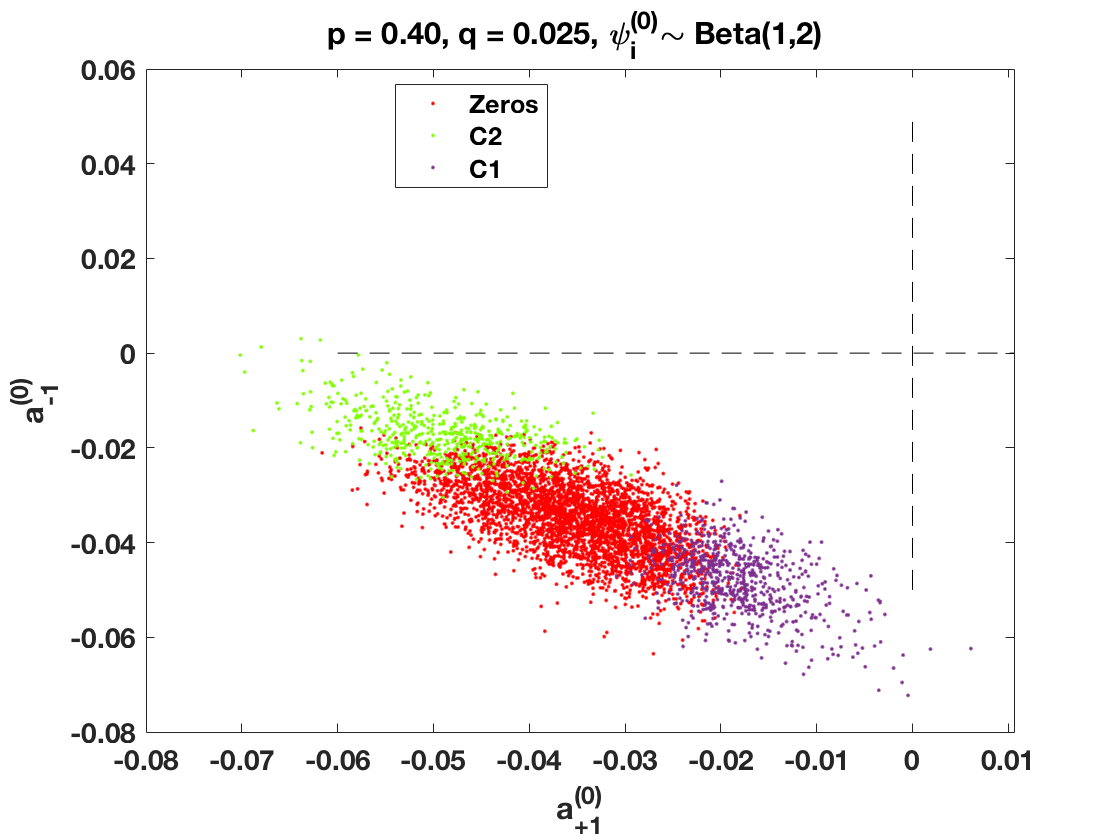}\\
	(a)&(b)\\
	\includegraphics[width=0.49\textwidth]{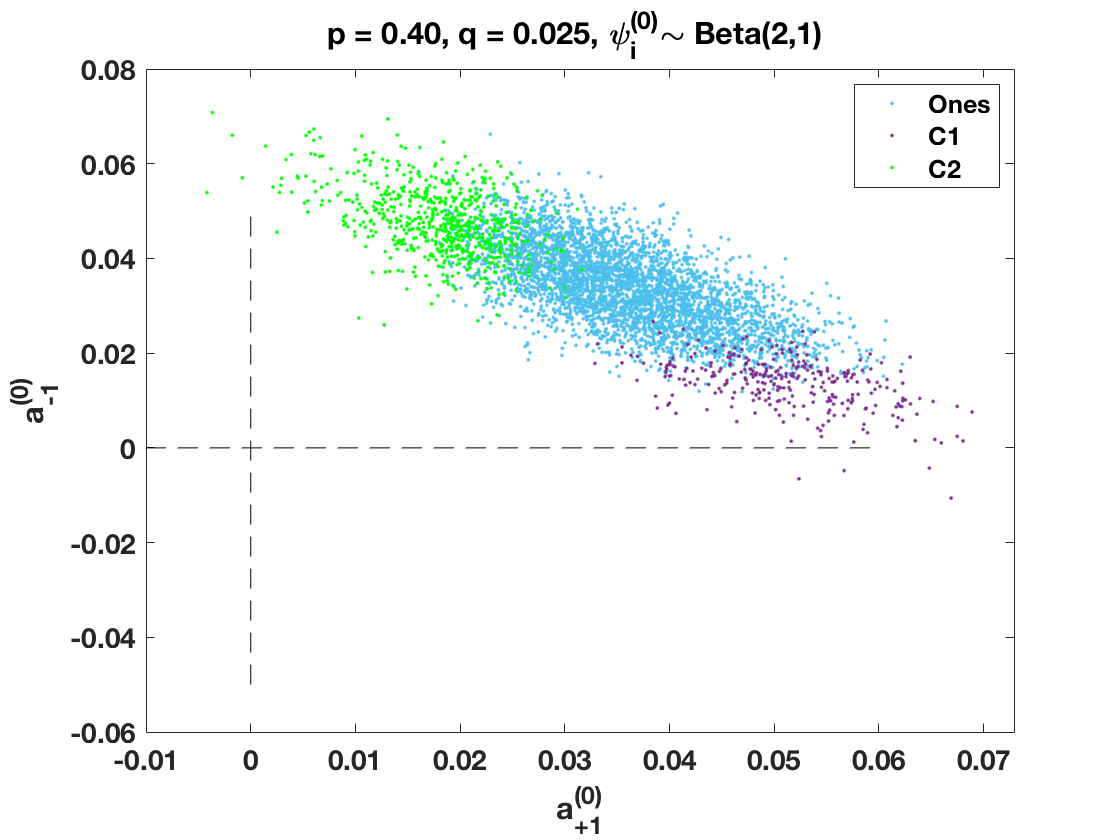} & \includegraphics[width=0.5\textwidth]{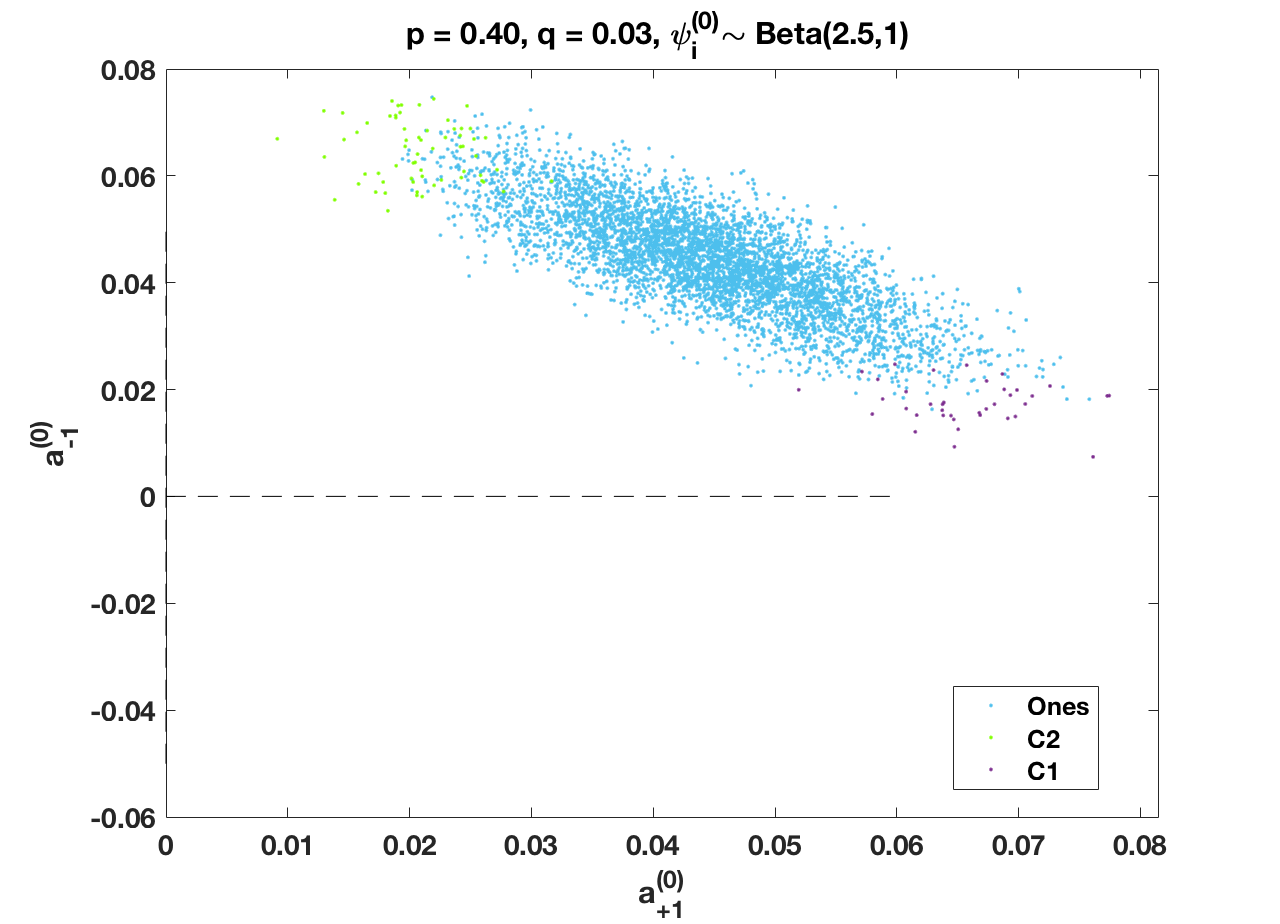}\\
	(c)& (d)
	\end{tabular}
	\caption{ $n = 200$ and $5000$,  $\psi^{(0)}\sim\mathrm{Beta}(\alpha,\beta)^{\otimes n}$ for various values of $\alpha$ and $\beta$. These $\psi^{(0)}$ are mapped to $(a_{+1}^{(0)}, a_{-1}^{(0)})$ (see ~\eqref{eq:main_decomp}) and plotted.  $C_1$ (magenta) and $C_2$ (green) correspond to the limit points $\icone$ and $\ictwo$. Other limit points are `Ones', i.e. $\bone$ (blue) and `Zeros', i.e. $\bzero$ (red). }
	\label{fig:region}
\end{figure}
The notable thing is, in Figure~\ref{fig:region}-(a) and (d), the Beta distribution has mean $0.16$ and $0.71$ respectively. So the initialization is more skewed towards values that are closer to zero or closer to one. In these cases most of the random runs converge to the all zeros or all ones, with very few converging to the ground truth. However, for Figure~\ref{fig:region}-(b) and (d), the mean of the Beta is $0.3$ and $0.7$, and we see considerably more convergences to the ground truth. Also, (b) and (d) are, in some sense, mirror images of each other, i.e. in one, the majority converges to $\bzero$; whereas in the other, the majority converges to $\bone$.

In Figure~\ref{fig:pqnoise}, we examine whether convergence can hold even when the exact values of $\pstar$, $\qstar$ are unknown using the initiliazation scheme in Theorem~\ref{thm:halfcase} and Corollary~\ref{cor:pq_noise}. In each heatmap, the dashed lines indicate the true parameter values used to generate an adjacency matrix $A$. The heatmap contains pairs of $\hat{p}, \hat{q}$ that we use in the sample BCAVI updates~\eqref{eq:bcavi_sample_known_pq} for fixed parameters initialized with $\psi_i^{(0)}\sim\text{iid Bernoulli}(\frac{1}{2})$. For each pair of parameters, we use 50 such random initializations and compute the average clustering accuracy. In both cases, we can see that as long as the parameter estimates fall into a reasonable range around the true values, convergence to the ground truth happens for a high fraction of the random initializations. The plots are symmetric in terms of $\hat{p}$ and $\hat{q}$, suggesting the estimates do not have to respect the relationship $\hat{p}>\hat{q}$ as discussed in Remark~\ref{rem:pq_noise}.  

\begin{figure}[ht]
	\centering
	\begin{tabular}{cc}
		\includegraphics[width=0.5\textwidth]{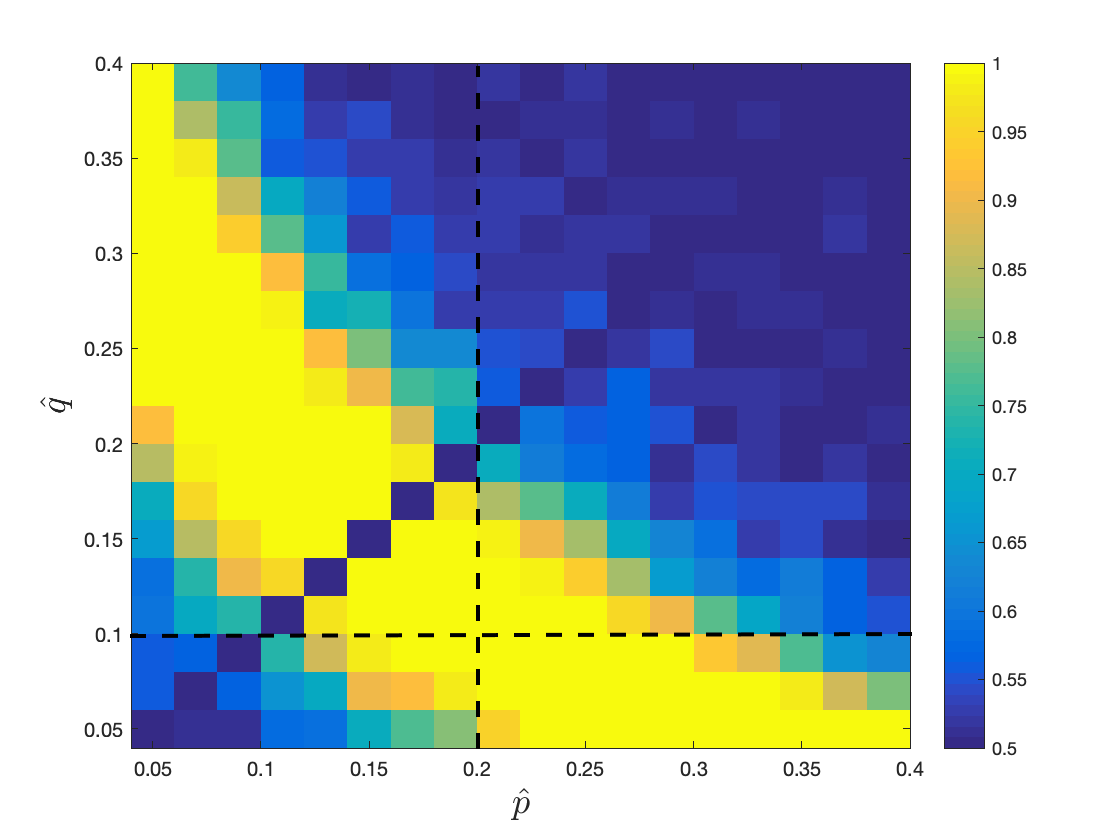} & \includegraphics[width=0.5\textwidth]{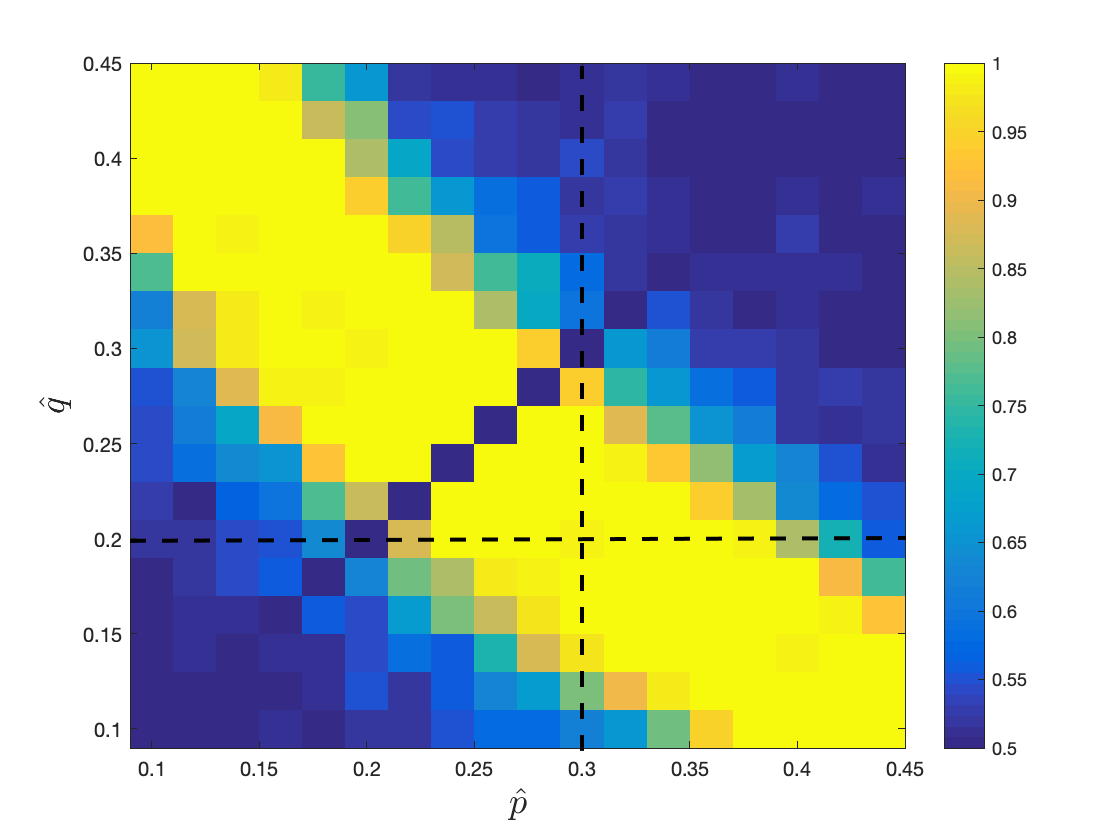}\\
		(a)&(b)\\
	\end{tabular}
	\caption{Average clustering accuracy using 50 random initializations $\psi_i^{(0)}\sim\text{iid Bernoulli}(\frac{1}{2})$ and different $\hat{p}, \hat{q}$ values in the BCAVI updates with fixed parameters. The dashed lines show the true parameter values, (a) $\pstar=0.2$, $\qstar=0.1$, (b) $\pstar=0.3$, $\qstar=0.2$. } 
\label{fig:pqnoise}
\end{figure}

In Figure~\ref{fig:error}, we examine initializations of the type described in Lemma~\ref{lem:goodinitunknown} and the resulting estimation error. For each $c_0$, we initialize $\psi^{(0)}$ such that $\E(\psi^{(0)})= (1/2+c_0)\icone + (1/2-c_0)\ictwo$ with iid noise. The y-axis shows the average distance between $\psi^{(20)}$ and the true $z_0$ from 500 such initializations, as measured by $\Vert \psi^{(20)} - z_0\Vert_1/n$.  For every choice of $\pstar, \qstar$, a network of size 400 with two equal sized blocks was generated. In all cases, sufficiently large $c_0$ guarantees convergence to the truth. We also observe that the performance deteriorates when $\pstar-\qstar$ becomes small, either when $\pstar$ decreases or when the network becomes sparser.

\begin{figure}[h]
	\centering
	\includegraphics[width=0.5\textwidth]{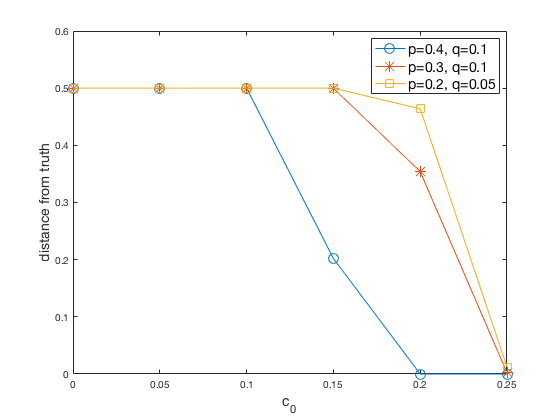}
	\caption{Average distance between the estimated $\psi$ and the true $z_0$ with respect to $c_0$, where $\E(\psi^{(0)})= (1/2+c_0)\icone + (1/2-c_0)\ictwo$.}
	\label{fig:error}
\end{figure}

\section{Discussion}\label{sec:conc}
In this paper, we work with the BCAVI mean field variational algorithm for a simple two class stochastic blockmodel with equal sized classes. Mean field methods are used widely for their scalability. However, existing theoretical works typically analyze the behavior of the global optima, or the local convergence behavior when initialized near the ground truth. In the simple setting considered, we show two interesting results. First, we show that, when the model parameters are known, random initializations centered around half converge to the ground truth a good fraction of time. The same convergence holds if some reasonable estimates of the model parameters are known and held fixed throughout the updates.  In contrast, when the parameters are not known and estimated iteratively with the mean field parameters, we show that a random initialization converges, with high probability, to a meaningless local optimum. This shows the futility of using multiple random initializations when no prior knowledge is available. 

In view of recent works on the optimization landscape for Gaussian mixtures \citep{jin2016local,xu2016global}, we would like to comment that, despite falling into the category of latent variable models, the SBM has fundamental differences from Gaussian mixtures which require different analysis techniques. The posterior probabilities of the latent labels in the latter model can be easily estimated when the parameters are known, whereas this is not the case for SBM since the posterior probability $\P(Z_i|A)$ depends on the entire network. The significance of the results in Section~\ref{subsec:knownpq} lies in characterizing the convergence of label estimates given the correct parameters for general initializations, which is different from the type of parameter convergence shown in~\citep{jin2016local,xu2016global}. Furthermore, as most of the existing literature for the SBM focuses on estimating the labels first, our results provide an important complementary direction by suggesting that one could start with parameter estimation instead. 

While we only show results for two classes, we expect that our main theoretical results generalize well to $K>2$ and will leave the analysis for future work. As an illustration, consider a setting similar to that of Figure~\ref{fig:region} but for $n=450$ with $K=3$ equal sized classes. $\pstar=0.5$, $\qstar=0.01$ are known and $\psi^{(0)}$ is initialized with a Dirichlet$(0.1, 0.1, 0.1)$ distribution. Each row of the matrix in Figure~\ref{fig:sep_K3} represents a stationary cluster membership vector from a random initialization. 

In Figure~\ref{fig:sep_K3}, all 1000 random initializations converge to stationary points $\psi$ lying in the span of $\{\icone, \ictwo, \icthree\}$, which are the membership vectors for each class. We represent the node memberships with different colors, and there are $1 + \binom{3}{2} = 4$ different types of stationary points, not counting label permutations. Another stationary point (the all ones vector that puts everyone in the same class) can be obtained with other initialization schemes, e.g., when the rows of $\psi^{(0)}$ are identical. For a general $K$- blockmodel, we conjecture that the number of stationary points grows exponentially with $K$. Similar to Figure~\ref{fig:region}, a significant fraction of the random initializations converge to the ground truth. On the other hand, when $\pstar,\qstar$ are unknown, random initializations always converge to the uninformative stationary point $(1/3, 1/3, 1/3)$, analogous to Lemma~\ref{lem:iidpsi-unknown}.
\begin{figure}[h]
	\centering
	\includegraphics[width=0.5\textwidth]{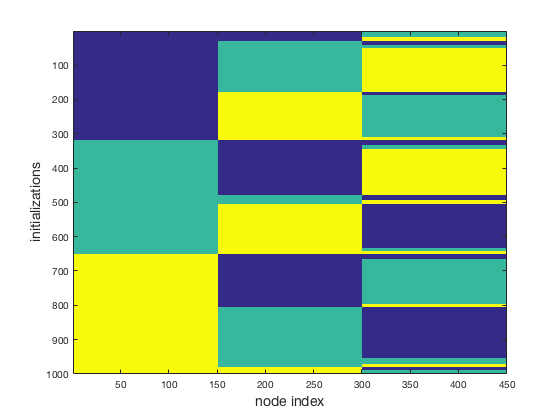}  \\
\caption{Convergence to stationary points for known $\pstar,\qstar$, $K=3$. Rows permuted for clarity.}
\label{fig:sep_K3}
\end{figure}


\section*{Acknowledgement}
SSM thanks Professor Peter J. Bickel for helpful discussions. PS is partially funded by NSF grant DMS1713082. YXRW is supported by the ARC DECRA Fellowship.


\appendix
\section*{Appendix A.}
\label{app:theorem}



This appendix provides derivation of stationarity equations for the mean field log-likelihood and the proofs of our main results.
\section{The Variational principle and mean field}
We start with the following simple observation:
\begin{align*}
\log P(A; B, \pi) &= \log \sum_Z P(A,Z; B,\pi)= \log \left(\sum_Z \frac{P(A,Z; B,\pi)}{\psi(Z)}\psi(Z) \right)\\
&\overset{\text{(Jensen)}}{\ge} \sum_Z \log \left(\frac{P(A,Z; B,\pi)}{\psi(Z)} \right)\psi(Z) \hskip20pt\forall \psi \text{ prob. on }\mathcal{Z}.
\end{align*}
In fact, equality holds for $\psi^*(Z) = P(Z|A; B,\pi)$. Therefore, if $\Psi$ denotes the set of all probability measures on $\mathcal{Z}$, then 
\begin{equation}
\log P(A; B,\pi) = \max_{\psi \in \Psi} \sum_Z \log \left(\frac{P(A,Z; B,\pi)}{\psi(Z)} \right)\psi(Z).
\end{equation}
The crucial idea from variational inference  is to replace the set $\Psi$ above by some easy-to-deal-with subclass $\Psi_0$ to get a lower bound on the log-likelihood.
\begin{equation}
\label{eq:mf_lowerbound}
\log P(A; B,\pi) \ge \max_{\psi \in \Psi_0 \subset \Psi} \sum_Z \log \left(\frac{P(A,Z; B,\pi)}{\psi(Z)} \right)\psi(Z).
\end{equation}
Also the optimal $\psi_\star\in \Psi_0$ is a potential candidate for an estimate of $P(Z|A; B,\pi)$. Estimating $P(Z|A; B,\pi)$ is profitable since then we can obtain an estimate of the community membership matrix by setting $Z_{ia} = 1$ for the $i$th agent where 
\begin{equation}
a = \arg \max_b P(Z_{ib} = 1 | A; B, \pi).
\end{equation}
The goal now has become optimizing the lower bound in~\eqref{eq:mf_lowerbound}.
\section{Derivation of stationarity equations}\label{sec:deriv_stat_points}
\begin{align}\label{eq:first_derivative}\nonumber
 & \frac{\partial \ell}{\partial \psi_i} = 4t \sum_{j : j \ne i} (\psi_j - \frac{1}{2})(A_{ij} - \lambda) - \log\bigg( \frac{\psi_i}{1 - \psi_i} \bigg),\\ \nonumber
 & \frac{\partial \ell}{\partial p} = \frac{1}{2} \sum_{i, j : i \ne j} (\psi_i \psi_j + (1 - \psi_i)(1 - \psi_j)) \bigg(A_{ij} \bigg(\frac{1}{p} + \frac{1}{1 - p}\bigg) - \frac{1}{1 - p}\bigg),  \\
 & \frac{\partial \ell}{\partial q} = \frac{1}{2} \sum_{i, j : i \ne j} (\psi_i (1 - \psi_j) + (1 - \psi_i)\psi_j) \bigg(A_{ij} \bigg(\frac{1}{q} + \frac{1}{1 - q}\bigg) - \frac{1}{1 - q}\bigg).
\end{align}

Therefore
\begin{align}\label{eq:second_derivative}\nonumber
\frac{\partial^2 \ell}{\partial \psi_j \partial \psi_i} &= 4t (A_{ij} - \lambda) (1 - \delta_{ij}) - \frac{1}{\psi_i(1 - \psi_i)} \delta_{ij}, \\ \nonumber
\frac{\partial^2 \ell}{\partial \psi_i \partial p} &= \frac{1}{2} \sum_{j : j \ne i} \bigg(\frac{1}{2} - \psi_j \bigg) \bigg(A_{ij} \bigg(\frac{1}{p} + \frac{1}{1 - p}\bigg) - \frac{1}{1 - p}\bigg),  \\ \nonumber
\frac{\partial^2 \ell}{\partial \psi_i\partial q} &= \frac{1}{2} \sum_{j : j \ne i} \bigg(\psi_i - \frac{1}{2}\bigg) \bigg(A_{ij} \bigg(\frac{1}{q} + \frac{1}{1 - q}\bigg) - \frac{1}{1 - q}\bigg),\\ \nonumber
\frac{\partial^2 \ell}{\partial p^2} &= \frac{1}{2} \sum_{i, j : i \ne j} (\psi_i \psi_j + (1 - \psi_i)(1 - \psi_j)) \bigg(A_{ij} \bigg(-\frac{1}{p^2} + \frac{1}{(1 - p)^2}\bigg) - \frac{1}{(1 - p)^2}\bigg), \\ \nonumber
\frac{\partial^2 \ell}{\partial q^2} &= \frac{1}{2} \sum_{i, j : i \ne j} (\psi_i (1 - \psi_j) + (1 - \psi_i)\psi_j) \bigg(A_{ij} \bigg(-\frac{1}{q^2} + \frac{1}{(1 - q)^2}\bigg) - \frac{1}{(1 - q)^2}\bigg),\\
\frac{\partial^2 \ell}{\partial q\partial p} &= 0.
\end{align} 

\section{Proofs of main results}
\begin{proof}[Proof of Proposition~\ref{prop:bd_t_lambda}]
	For any $a > b > 0$, we have
	\[
	\frac{a - b}{a} < \log\bigg(\frac{a}{b}\bigg) < \frac{a - b}{b},
	\]
	which can be proved using the inequality $\log(1 + x) < x$ for $x > -1, x \ne 0$. Therefore
	\[
	\frac{p - q}{p} < \log\bigg(\frac{p}{q}\bigg) < \frac{p - q}{q}, \,\,\,\text{and }\,\,\, \frac{p - q}{1 - q} < \log\bigg(\frac{1 - q}{1 - p}\bigg) < \frac{p - q}{1 - p}.
	\]
	So
	\[
	\frac{(p - q) (1 + p - q)}{2(1 - q) p} < t = \frac{1}{2} \bigg(\log\bigg(\frac{p}{q}\bigg) + \log\bigg(\frac{1 - q}{1 - p}\bigg) \bigg) < \frac{(p - q) (1 - p + q)}{2(1 - p) q},
	\]
	and
	\[
	q = \frac{\frac{p - q}{1 - q}}{\frac{p - q}{q} + \frac{p - q}{1 - q}} < \lambda = \frac{\log(\frac{1 - q}{1 - p})}{\log(\frac{p}{q}) + \log(\frac{1 - q}{1 - p})} <  \frac{\frac{p - q}{1 - p}}{\frac{p - q}{p} + \frac{p - q}{1 - p}} = p.
	\]

\end{proof}

\begin{proof}[Proof of Proposition~\ref{prop:lambda_lbub}]
	Let $y=(p-q)/(1-p) >0$. 
	We will use the well known inequalities~\citep{vu17162}:
	\begin{align}
	\log(1+y)&\geq \frac{2y}{2+y}\geq \frac{y}{1+y}, \label{eq:logub}\\
	\log(1+y)&\leq y-\frac{y^2}{2(1+y)}\label{eq:loglb}
	\end{align}
	Using Eq~\eqref{eq:loglb},
	\begin{align*}
	\lambda&=\frac{\log\frac{1-q}{1-p}}{\log\frac{p}{q}+\log\frac{1-q}{1-p}}\geq \frac{y}{(1+y)\log\frac{p}{q}+y}\geq\frac{(p-q)}{\log\frac{p}{q}+(p-q)}
	\end{align*}
	Using Eq~\eqref{eq:logub} we get:
	\begin{align*}
	\lambda-q&\geq \frac{(p-q)-q\log(p/q)-O(\rho_n^2)}{\log\frac{p}{q}+(p-q)}\\
	&\geq  \frac{(p-q)-q\left(\frac{p-q}{q}-\frac{(p-q)^2}{2pq}\right)-O(\rho_n^2)}{\log\frac{p}{q}+(p-q)}\\
	&\geq  \frac{\frac{(p-q)^2}{2p}-O(\rho_n^2)}{\log\frac{p}{q}+O(\rho_n)}=\Omega(\rho_n)\\
	\end{align*}
	The last step is true since $p-q=\Omega(\rho_n)$.
	
	Now we prove Eq~\eqref{eq:lambdaub_sep}. Let $x:=p/q-1=\Omega(1)$, since $p-q=\Omega(\rho_n)$.
	\begin{align}
	\lambda&\leq \frac{p-q}{(1-p)\log\frac{p}{q}+(p-q)}\nonumber\\
	\frac{p+q}{2}-\lambda &\geq\frac{\frac{p+q}{2}\log(p/q)-(p-q)-O(\rho_n^2)}{(1-p)\log\frac{p}{q}+(p-q)}\nonumber\\
	&=q\frac{(1+x/2)\log (1+x)-x-O(\rho_n)}{\log(p/q)+O(\rho_n)}\label{eq:lambdalb2}
	\end{align}
	Consider the function $h(x)$ defined below, where $x=p/q-1=\Omega(1)$.
	\begin{align*}
	h(x)&=(2+x)\log(1+x)-2x\\
	h'(x)&=\log(1+x)+\frac{2+x}{1+x}-2=\log(1+x)-\frac{x}{1+x}\\
	&\geq \frac{2x}{2+x}-\frac{x}{1+x}=\frac{x^2}{(2+x)(1+x)}=\Omega(1)
	\end{align*}
	Plugging into Eq~\eqref{eq:lambdalb2} we get:
	\begin{align*}
	\frac{p+q}{2}-\lambda&\geq q\frac{h(x)-O(\rho_n)}{2\log(p/q)+O(\rho_n)}=\Omega(\rho_n)
	\end{align*}
\end{proof}

\subsection{Proofs of results in Section~\ref{subsec:knownpq}}

\begin{proof}[Proof of Proposition~\ref{prop:saddle}]
	That $\psi = \frac{1}{2}\bone$ is a stationary point is obvious from the stationarity equations \eqref{eq:first_derivative}. The eigenvalues of $-4I + 4\tstar M$, the Hessian at $\frac{1}{2}\bone$, are $h_i = - 4 + 4\tstar \nu_i$. We have $\nu_1 = n\alpha_+ - (\pstar - \lambdastar) = \Theta(n)$, and hence so is $h_1$. Also, $ \pstar - \lambdastar > 0$, so that $\nu_3 < 0$, and hence $h_3 < 0$. Thus we have two eigenvalues of the opposite sign.
\end{proof}

\begin{proof}[Proof of Theorem~\ref{thm:pop_known_pq}]
	From \eqref{eq:main_decomp}, we have
	\[
	\psi^{(s+1)}_i = g(n a_{\sigma_i}^{(s)} + b_i^{(s)}) = g(na_{\sigma_i}^{(s)}) + \delta_i^{(s)},
	\]
	where $|\delta_i^{(s)}| = O(\exp(-n |a_{\sigma_i}^{(s)}|))$, where we have used the fact that
	\[
	g(nx + y) - g(nx) = g(nx)g(nx+y)(e^y - 1)\exp(-(nx+y)).
	\]
	Writing as a vector, we have
	\begin{equation}\label{eq:psi_approx}
	\psi^{(s+1)} = g(n a_{+1}^{(s)}) \bone_{\mathcal{C}_1} + g(n a_{-1}^{(s)}) \bone_{\mathcal{C}_2} + \delta^{(s)},
	\end{equation}
	where $\|\delta^{(s)}\|_{\infty} = \max_i |\delta_i^{(s)}| = O(\exp(-n \min\{| a_{+1}^{(s)}|,  |a_{-1}^{(s)}|\}))$. Note that by our assumption, $\|\delta^{(0)}\|_\infty = O(\exp(-n \min\{| a_{+1}^{(s)}|,  |a_{-1}^{(s)}|\})) = o(1)$. Now
	\[
	\zeta^{(s+1)}_1 = \frac{\langle\psi^{(s + 1)}, u_1\rangle}{n} = \frac{g(n a_{+1}^{(s)}) + g(n a_{-1}^{(s)})}{2} + O(\|\delta^{(s)}\|_\infty),
	\]
	and
	\[
	\zeta^{(s+1)}_2 = \frac{\langle\psi^{(s + 1)}, u_2\rangle}{n} = \frac{g(n a_{+1}^{(s)}) - g(n a_{-1}^{(s)})}{2} + O(\|\delta^{(s)}\|_\infty).
	\]
	Note that $g(na_{\pm1}^{(s)}) = \bone_{\{a_{\pm 1}^{(s)} > 0\}} + O(\|\delta^{(s)}\|_\infty)$. Now, using \eqref{eq:psi_approx},we have
	\begin{align}\label{eq:lim_closeness}\nonumber
	&\frac{\|\psi^{(s+1)} - \ell(\psi^{(0)})\|_2^2}{n}\\ \nonumber &\qquad=\frac{\|(g(n a_{+1}^{(s)}) - \bone_{\{a_{+1}^{(0)} > 0\}}) \icone + (g(n a_{-1}^{(s)}) - \bone_{\{a_{-1}^{(0)} > 0\}}) \ictwo + \delta^{(s)}\|^2}{n}\\ \nonumber
	&\qquad\le \frac{2(\|(g(n a_{+1}^{(s)}) - \bone_{\{a_{+1}^{(0)} > 0\}}) \icone\|_2^2 + \|(g(n a_{-1}^{(s)}) - \bone_{\{a_{-1}^{(0)} > 0\}}) \ictwo \|_2^2 + \|\delta^{(s)}\|^2)}{n}\\ \nonumber
	&\qquad\le |g(n a_{+1}^{(s)}) - \bone_{\{a_{+1}^{(0)} > 0\}}|^2 + |g(n a_{-1}^{(s)}) - \bone_{\{a_{-1}^{(0)} > 0\}}|^2 + 2 \|\delta^{(s)}\|_{\infty}^2\\
	&\qquad= |\bone_{\{a_{+ 1}^{(s)} > 0\}} - \bone_{\{a_{+1}^{(0)} > 0\}}|^2 + |\bone_{\{a_{+ 1}^{(s)} > 0\}} - \bone_{\{a_{-1}^{(0)} > 0\}}|^2 + O(\|\delta^{(s)}\|_{\infty}^2).
	\end{align}
	From the above representation and our assumption on $n|a^{(0)}_{\pm 1}|$, the bound for $s = 1$ follows. We will now consider the four different cases of different signs of $a_{\pm 1}^{(s)}$.
	\vskip5pt
	\textbf{Case 1: $a_1^{(s)} > 0, a_{-1}^{(s)} > 0.$} In this case $g(na_1^{(s)}) = g(n a_{-1}^{(s)})= 1 + O(\|\delta^{(s)}\|_{\infty})$, so that 
	\[
	(\zeta_1^{(s+1)}, \zeta_2^{(s+1)}) = (1, 0) + O(\|\delta^{(s)}\|_{\infty}).
	\]
	This implies that
	\[
	a_{\pm 1}^{(s+1)} = 2\tstar\alpha_+ + O(\|\delta^{(s)}\|_{\infty}).
	\]
	If $\alpha_+ > 0$, $a_{\pm 1}^{(s+1)}$ have the same sign as $a_{\pm 1}^{(s)}$. Otherwise, if $\alpha_+ < 0$, both of them become negative (and we thus have to go to Case 2 below). Note that, here and in the subsequent cases, we are using that fact that $\|\delta^{(s)}\|_{\infty} = o(1)$, for $s = 0$, by our assumption and it stays the same for $s \ge 1$ because of relations like the above (that is $a_{\pm 1}^{(1)} = - 2\tstar \alpha_+ + o(1)$, so that $\|\delta^{(1)}\|_{\infty} = \exp(-n \min\{| a_{+1}^{(1)}|,  |a_{-1}^{(1)}|\}) = O(\exp(-C n\tstar\alpha_+)) = o(1)$, and so on). 
	\vskip5pt
	\textbf{Case 2: $a_1^{(s)} < 0, a_{-1}^{(s)} < 0.$} In this case $1 - g(na_1^{(s)}) = 1 - g(n a_{-1}^{(s)})= 1 + O(\|\delta^{(s)}\|_{\infty})$, so that 
	\[
	(\zeta_1^{(s+1)}, \zeta_2^{(s+1)}) = (0, 0) + O(\|\delta^{(s)}\|_{\infty}).
	\]
	This implies that
	\[
	a_{\pm 1}^{(s+1)} = -2\tstar\alpha_+ + O(\|\delta^{(s)}\|_{\infty}).
	\]
	If $\alpha_+ > 0$, $a_{\pm 1}^{(s+1)}$ have the same sign as $a_{\pm 1}^{(s)}$. Otherwise, if $\alpha_+ < 0$, both of them become positive (and we thus have to go to Case 1 above). 
	\vskip5pt
	\textbf{Case 3: $a_1^{(s)} > 0, a_{-1}^{(s)} < 0.$} In this case $g(na_1^{(s)}) = 1 - g(n a_{-1}^{(s)})= 1 + O(\|\delta^{(s)}\|_{\infty})$, so that 
	\[
	(\zeta_1^{(s+1)}, \zeta_2^{(s+1)}) = (\frac{1}{2}, \frac{1}{2}) + O(\|\delta^{(s)}\|_{\infty}).
	\]
	This implies that
	\[
	a_{\pm 1}^{(s+1)} = \pm 2\tstar\alpha_- + O(\|\delta^{(s)}\|_{\infty}).
	\]
	Since $\alpha_- > 0$, $a_{\pm 1}^{(s+1)}$ have the same sign as $a_{\pm 1}^{(s)}$. 
	\vskip5pt
	\textbf{Case 4: $a_1^{(s)} < 0, a_{-1}^{(s)} > 0.$} In this case $1 - g(na_1^{(s)}) = g(n a_{-1}^{(s)})= 1 + O(\|\delta^{(s)}\|_{\infty})$, so that 
	\[
	(\zeta_1^{(s+1)}, \zeta_2^{(s+1)}) = (\frac{1}{2}, -\frac{1}{2}) + O(\|\delta^{(s)}\|_{\infty}).
	\]
	This implies that
	\[
	a_{\pm 1}^{(s+1)} = \mp 2\tstar\alpha_- + O(\|\delta^{(s)}\|_{\infty}).
	\]
	Since $\alpha_- > 0$, $a_{\pm 1}^{(s+1)}$ have the same sign as $a_{\pm 1}^{(s)}$. 
	\vskip10pt
	
	Note that, in the case $\alpha_+ = 0$, $a_{\pm 1}^{(s)} = \pm 4 \tstar \zeta_2^{(s)} \alpha_-$, so that $a_{\pm 1}^{(s)}$ have opposite signs and we land in Cases 3 or 4.
	
	We conclude that, if $\alpha_+ \ge 0$, then we stay in the same case where we began, and otherwise if $\alpha_+ < 0$ we have a cycling behavior between Cases 1 and 2. Now the desired conclusion follows from the bound \eqref{eq:lim_closeness}.
	
In the proof above, we can allow sparser graphs, with $\pstar, \qstar \gg \frac{1}{n}$. More explicitly, let $\pstar = \rho_n a, \qstar = \rho_n b$, with $a > b > 0$ and $\rho_n \gg \frac{1}{n}$. Then, $\tstar = \Omega(1),$ and $\alpha_{+} \le \pstar - \qstar = \rho_n(a - b), \alpha_- = (\pstar - \qstar)/2 = \rho_n (a - b)/2$. So, we do have $n\tstar |\alpha_\pm| \rightarrow \infty$.   
\end{proof}

\begin{proof}[Proof of Theorem~\ref{thm:samp_known_pq}]
We begin by noting that $ A-\lambdastar(J-I)-M = A - \E (A|Z) := A-\tilde{P}$. For the first iteration, we rewrite the sample iterations \eqref{eq:bcavi_sample_known_pq} as
\begin{align*}
	\xi^{(1)} &= 4\tstar M\bigg(\psi^{(0)} - \frac{1}{2}\bone\bigg) + 4\tstar \underbrace{ (A - \tilde{P})\bigg(\psi^{(0)} - \frac{1}{2}\bone\bigg)}_{=: r^{(0)}}.
\end{align*}
Therefore, similar to the population case, we have
\begin{align*}
	\psi^{(1)}_i &= g(n a^{(0)}_{\sigma_i} + b^{(0)}_i + 4\tstar r^{(0)}_i).
\end{align*}
Note that
\begin{align}
\label{eq:rnoisedef}
	r^{(0)}_i = \sum_{j \ne i} (A_{ij} - \tilde{P}_{ij}) ( \psi^{(0)}_j - \frac{1}{2}).
\end{align}
Since our probability statements will be with respect to the randomness in $A$ and $\psi^{(0)}$ is independent of $A$, we may assume that $\psi^{(0)}$ is fixed. Let $Y_{ij} = (A_{ij} - \tilde{P}_{ij}) (\psi^{(0)}_j - \frac{1}{2})$. Then the $Y_{ij}$ are independent random variables for $j \ne i$, and $\E(Y_{ij}) = 0$. Also, $|Y_{ij}| \le |\psi^{(0)}_j - \frac{1}{2}| \le \|\psi^{(0)} - \frac{1}{2}\|_{\infty} = \Delta$, say, and $\E Y_{ij}^2 = (\psi^{(0)}_j - \frac{1}{2})^2 \mathrm{Var}(A_{ij}) = O(\rho_n (\psi^{(0)}_j - \frac{1}{2})^2)$. So, by Bernstein's inequality,
\begin{align}
	\P(\frac{1}{n}\sum_{j \ne i} Y_{ij} > \epsilon ) &\le \exp\bigg( \frac{-\frac{1}{2}n^2\epsilon^2}{\sum_{j \ne i} \E Y_{ij}^2 + \frac{1}{3}\Delta n \epsilon} \bigg)	\notag\\
	&\le \exp\bigg( \frac{-\frac{1}{2}n^2\epsilon^2}{C \rho_n \|\psi^{(0)} - \frac{1}{2}\|_{2}^2 + \frac{1}{3}\Delta n\epsilon} \bigg) \notag\\
	&\le  \exp\bigg( \frac{-\frac{1}{2}n^2\epsilon^2}{C n\rho_n \Delta^2  + \frac{1}{3}\Delta n \epsilon} \bigg).
	\label{eq:bernstein}
\end{align}
It follows from here that $ r^{(0)}_i = O(\sqrt{n\rho_n}\Delta \log n)$ with high probability, if $\sqrt{n\rho_n} = \Omega(\log n)$. In fact, by taking a suitably large constant in the big ``Oh'', we can show, via a union bound, that $\max_{i}  r^{(0)}_i = O(\sqrt{n\rho_n}\Delta \log n)$ with high probability.


Now, from our assumption $n|a^{(0)}_{\pm 1}| \gg \max \{\sqrt{n\rho_n}\| \psi^{(0)} - \frac{1}{2}\|_{\infty} \log n, 1 \}$, it follows that $n a^{(0)}_{\sigma_i} \gg 4\tstar r^{(0)}_i + b^{(0)}_i$ with high probability, simultaneously for all $i$.
Thus, similar to the population case, we can write
\[
	\psi^{(1)} = g(n a_{+1}^{(0)}) \bone_{\mathcal{C}_1} + g(n a_{-1}^{(0)}) \bone_{\mathcal{C}_2} + \hat{\delta}^{(0)},
\]
where $\|\hat{\delta}^{(0)}\|_{\infty} = O(\exp(-n\min\{|a_{+1}^{(0)}|, |a_{-1}^{(0)}|\})) = o(1)$, with high probability.  After this the proof proceeds like the the proof of Theorem~\ref{thm:pop_known_pq}, and so we omit it.

Let us consider the case with $s=2$ and we will show $r^{(1)}_i$ can be bounded in a general way. Now 
\begin{align*}
\xi^{(2)}
& = 4\tstar M(\psi^{(1)}-\frac{1}{2}\bone) + 4\tstar r^{(1)}	\\
& = 4\tstar M(\psi^{(1)}-\frac{1}{2}\bone) + \underbrace{4\tstar(A-\tilde{P})(\psi^{(1)}-\ell(\psi^{(0)}))}_{R_1} +  \underbrace{4\tstar(A-\tilde{P})(\ell(\psi^{(0)})-\frac{1}{2}\bone)}_{R_2}.	\\
\end{align*}
Now the analysis of the first term follows from Theorem~\ref{thm:pop_known_pq}. It is also easy to see $\max_i |R_{2,i}| = O_P(\sqrt{n\rho_n})$, since $\ell(\psi^{(0)}) \in \{\icone, \ictwo, \bone, \bzero, \frac{1}{2}\bone \}$. For $R_1$,
\begin{align*}
\max_i |R_{1,i}| & \leq \| R_1 \|_2 \leq C \| A-\tilde{P} \|_{op} \| \psi^{(1)}-\ell(\psi^{(0)}) \|_2		\\
 & = O_P(\sqrt{n\rho_n}) \sqrt{n} \cdot O (\exp(-\Theta(n\min\{|a_{+1}^{(0)}|, |a_{-1}^{(0)}|\}))) = o_P(1),
\end{align*}
under our assumption that $n|a^{(0)}_{\pm 1}| \gg \max \{\sqrt{n\rho_n}\| \psi^{(0)} - \frac{1}{2}\|_{\infty} \log n, 1 \}$. Hence $\max_i |r_i^{(1)}| = O_P(\sqrt{n\rho_n})$, and $n a^{(1)}_{\sigma_i} \gg 4\tstar r^{(1)}_i + b^{(1)}_i$ with high probability, simultaneously for all $i$. The same analysis as in the $s=1$ case follows. 

The case for general $s$ can be proved by induction using the same decomposition of $r^{(s)}$.
%
%
%
\end{proof}

\begin{proof}[Proof of Corollary~\ref{cor:volume_pop}]

From Theorem~\ref{thm:pop_known_pq}, it follows that, when $\alpha_+ > 0$,
\begin{align*}
	\mathfrak{M}(\mathcal{S}_{\bone}) &\ge \mathfrak{M}(\{\psi^{(0)} \mid  a_{+1}^{(0)} > 0, a_{-1}^{(0)} > 0, n a_{\pm 1}^{(0)} \gg 1\}\\
	&= \mathfrak{M}(\{\psi^{(0)} \mid  a_{+1}^{(0)} \gg \frac{1}{n}, a_{-1}^{(0)} \gg \frac{1}{n} \}) \\
	&\ge \mathfrak{M}(\{\psi^{(0)} \mid  a_{+1}^{(0)} > \frac{1}{n^\gamma}, a_{-1}^{(0)} > \frac{1}{n^\gamma} \}),
\end{align*}

for any $0 < \gamma < 1$ and so on for the other other limit points. 


More explicitly,
\begin{align*}
\{ \psi^{(0)} \mid a_{+1}^{(0)} > \frac{1}{n^\gamma}, a_{-1}^{(0)} > \frac{1}{n^\gamma}\} &= \{\psi^{(0)} \mid (\zeta_1^{(0)} - \frac{1}{2})\alpha_+ + \zeta_2^{(0)}\alpha_- > \frac{1}{4t n^\gamma}, \\
& \qquad\qquad\qquad\qquad\qquad (\zeta_1^{(0)} - \frac{1}{2})\alpha_+ - \zeta_2^{(0)}\alpha_- > \frac{1}{4t n^\gamma}\} \\
&= H_+^{\gamma} \cap H_-^{\gamma} \cap [0,1]^n,
\end{align*}

All in all, we have
\[
 	\mathfrak{M}(\mathcal{S}_{\bone}) \ge \lim_{\gamma \uparrow 1}\mathfrak{M}(H_+^{\gamma} \cap H_-^{\gamma} \cap [0,1]^n).
\]
This completes the proof.
\end{proof}

The main proof of Theorem~\ref{thm:halfcase} relies on a few lemmas, which we defer to the end of the proof. 
\begin{proof}[Proof of Theorem~\ref{thm:halfcase}]
	
For convenience, we assume $A$ has self loops, which has no effect on the conclusion. Similar to the notation used in the proof of Theorem~\ref{thm:samp_known_pq}, we decompose $\xi_i$ as the population update plus noise,
\begin{align}
\xi_i^{(s+1)} = 4\tstar \underbrace{M_{i,\cdot}(\psi^{(s)}-\frac{1}{2}\bone)}_{\text{signal}} + 4\tstar \underbrace{ (A-\E(A|Z))_{i,\cdot} (\psi^{(s)}-\frac{1}{2}\bone)}_{r^{(s)}_i}. 
	\label{eq:update_knownpq_signalnoise}
\end{align}
Note that the signal part is constant for $i\in\icone$ and $i\in\ictwo$. For convenience denote 
\begin{align}
s_1& =M_{i,\cdot}(\psi^{(0)}-\frac{1}{2}\bone), \qquad  i\in\icone	\notag\\
s_2& =M_{i,\cdot}(\psi^{(0)}-\frac{1}{2}\bone), \qquad  i\in\ictwo.
\label{eq:s_def}
\end{align}
Similarly, define $s_1^{(1)}$ and $s_2^{(1)}$ in terms of $\psi^{(1)}$. By Lemma~\ref{lem:general_lb_ub}, since $\pstar > \lambdastar > \qstar$, for $\Delta_1, \Delta_2>0$,
\begin{align}
s_1^{(1)} & = (\pstar-\lambdastar) \sum_{i \in\cC_1} (\psi_i^{(1)}-\frac{1}{2}) +  (\qstar-\lambdastar) \sum_{i \in\cC_2} (\psi_i^{(1)}-\frac{1}{2})	\notag\\
& \geq (\pstar-\lambdastar) \frac{n}{2}\left(\frac{1}{2}-\Phi\left(-\frac{s_1-\Delta_1}{\sigma_{\psi}}  \right)  \right) + (\qstar-\lambdastar) \frac{n}{2} \left( \frac{1}{2}-  \Phi\left(-\frac{s_2+\Delta_2}{\sigma_{\psi}}  \right) \right)		\notag\\
& \qquad\qquad  - \underbrace{ O(n\rho_n)(e^{-4\tstar\Delta_1} +e^{-4\tstar\Delta_2}) - O(n\rho_n)\frac{\rho_{\psi}}{\sigma_{\psi}^3}- O_P(\sqrt{n}\rho_n)}_{R_{\psi}}.
\label{eq:s1_1_lb}
\end{align}
Similarly, 
\begin{align}
s_2^{(1)} & = (\qstar-\lambdastar) \sum_{i \in\cC_1} (\psi_i^{(1)}-\frac{1}{2}) +  (\pstar-\lambdastar) \sum_{i \in\cC_2} (\psi_i^{(1)}-\frac{1}{2})	\notag\\
&\leq (\qstar-\lambdastar) \frac{n}{2}\left(\frac{1}{2}-\Phi\left(-\frac{s_1-\Delta_1}{\sigma_{\psi}}  \right)  \right) + (\pstar-\lambdastar) \frac{n}{2} \left( \frac{1}{2}-  \Phi\left(-\frac{s_2+\Delta_2}{\sigma_{\psi}}  \right) \right)	+  R_{\psi}	\notag\\
\label{eq:s2_1_ub}
\end{align}

We consider bounding $s_1^{(1)}$ and $s_2^{(1)}$ based on the signs of $s_1$ and $s_2$, which only depend on $\psi^{(0)}$. Therefore in each case, we first consider the conditional distribution given $\psi^{(0)}$.

{\bf{Case 1: $s_1>0$, $s_2<0$.}}

Let $\Delta_1=\epsilon s_1$, $\Delta_2=-\epsilon s_2$ for some small $\epsilon>0$. We have
\begin{align*}
\frac{1}{2}-\Phi\left(-\frac{(1-\epsilon)s_1}{\sigma_{\psi}}  \right) & \geq \frac{(1-\epsilon)s_1}{\sigma_{\psi}\sqrt{2\pi}} \exp\left( -\frac{(1-\epsilon)^2s_1^2}{2\sigma_{\psi}^2} \right),	\\
\Phi\left(-\frac{(1-\epsilon)s_2}{\sigma_{\psi}} \right) - \frac{1}{2} & \geq  -\frac{(1-\epsilon)s_2}{\sigma_{\psi}\sqrt{2\pi}} \exp\left( -\frac{(1-\epsilon)^2s_2^2}{2\sigma_{\psi}^2} \right),
\end{align*}
where we have used 
\begin{align}
|\Phi(x)-1/2| & = \frac{1}{\sqrt{2\pi}} \int_0^{|x|} e^{-u^2/2}du 	\notag\\ 
&  \geq \frac{|x|}{\sqrt{2\pi}}e^{-x^2/2}.
\label{eq:normal_cdf_lb}
\end{align}
Applying the above to~\eqref{eq:s1_1_lb},
\begin{align}
s_1^{(1)} & \geq \frac{n(1-\epsilon)}{2\sqrt{2\pi}\sigma_{\psi}} ((\pstar-\lambdastar)|s_1| + (\lambdastar-\qstar)|s_2| ) \exp\left( -\frac{(1-\epsilon)^2s_2^2\vee s_1^2}{2\sigma_{\psi}^2}  \right) - R_{\psi}.
\label{eq:s1_1_case1}
\end{align}
Similar arguments show
\begin{align}
s_2^{(1)} & \leq -\frac{n(1-\epsilon)}{2\sqrt{2\pi}\sigma_{\psi}} ((\lambdastar-\qstar)|s_1| + (\pstar-\lambdastar)|s_2| ) \exp\left( -\frac{(1-\epsilon)^2s_2^2\vee s_1^2}{2\sigma_{\psi}^2}  \right) + R_{\psi}	
\label{eq:s2_1_case1}
\end{align}

{\bf{Case 2: $s_1<0$, $s_2>0$.}}

The same analysis applies with the role of $\cC_1$ and $\cC_2$ interchanged. 

{\bf{Case 3: $s_1>0$, $s_2>0$.}} 

WLOG assume $s_1>s_2>0$. Taking $\Delta_1=\Delta_2=\epsilon (s_1-s_2)$,~\eqref{eq:s1_1_lb} becomes
\begin{align}
s_1^{(1)} & \geq \frac{n}{2\sqrt{2\pi}\sigma_{\psi}} [(\pstar-\lambdastar)(s_1-\epsilon(s_1-s_2)) - (\lambdastar-\qstar)(s_2+\epsilon(s_1-s_2)) ] \exp\left( -\frac{(1-\epsilon)^2 s_1^2}{2\sigma_{\psi}^2}  \right) - R_{\psi}		\notag\\
& \geq \frac{n}{2\sqrt{2\pi}\sigma_{\psi}} [(\lambdastar-\qstar) - \epsilon(\pstar-\qstar)]|s_1-s_2| \exp\left( -\frac{(1-\epsilon)^2  s_1^2}{2\sigma_{\psi}^2}  \right) - R_{\psi}	
\label{eq:s1_1_case3}
\end{align}
using $\pstar-\lambdastar > \lambdastar -\qstar$ (Proposition~\ref{prop:lambda_lbub}). Since $\lambdastar-\qstar=\Omega(\rho_n)$ also by Proposition~\ref{prop:lambda_lbub}, choose a $\epsilon$ small enough so that $(\lambdastar-\qstar) - \epsilon(\pstar-\qstar) \geq \Omega(\rho_n)$. 

Similarly, taking $\Delta_1=\epsilon_n s_1$, $\Delta_2=\epsilon_n s_2$,
\begin{align}
s_2^{(1)} & \leq  -\frac{n}{2\sqrt{2\pi}\sigma_{\psi}} [(\lambdastar-\qstar)(1-\epsilon_n)s_1 -(\pstar-\lambdastar)(1+\epsilon_n)s_2] \exp\left( -\frac{(1+\epsilon_n)^2 s_1^2}{2\sigma_{\psi}^2}  \right) + R_{\psi}	
\label{eq:s2_1_case3},
\end{align}
Letting $\epsilon_n\to 0 $ slowly and denote $c=\frac{(\lambdastar-\qstar)(1-\epsilon_n)}{(\pstar-\lambdastar)(1+\epsilon_n)}-\eta$, for some small $\eta>0$. When $s_2\leq c s_1$, 
\begin{align}
s_2^{(1)} & \leq - \frac{n}{2\sqrt{2\pi}\sigma_{\psi}}  \eta (\pstar-\lambdastar) |s_1| + R_{\psi}.
\end{align}  
By Lemma~\ref{lem:s_bounds},  $s_2\leq c s_1$ happens with probability
\begin{align*}
P(0<s_2\leq cs_1) =  \frac{\arctan(c_u)}{2\pi} -  \frac{\arctan(c_{\ell})}{2\pi} + O(n^{-1/2}),
\end{align*}
where $c_u = \frac{\pstar-\lambdastar}{\lambdastar-\qstar} $, $c_{\ell} = \frac{(\pstar-\lambdastar)+c(\lambdastar-\qstar)}{c(\pstar-\lambdastar)+(\lambdastar-\qstar)}$.

When $s_2>s_1>0$, the analysis is the same by symmetry. We have the same bounds for $s_1^{(1)}$ and $s_2^{(1)}$ with $s_1$ and $s_2$ interchanged. By a similar calculation, we need
\begin{align*}
P(0<s_1\leq cs_2) =  \frac{\arctan(c_{\ell}^{-1})}{2\pi} -  \frac{\arctan(c_u^{-1})}{2\pi} + O(n^{-1/2}),
\end{align*}

{\bf{Case 4: $s_1<0$, $s_2<0$.}} 
By symmetry, $g(4\tstar(s_1+r_i^{(0)}))-\frac{1}{2} = \frac{1}{2}-g(-4\tstar(s_1+r_i^{(0)}))$ (similarly for $g(4\tstar(s_2+r_i^{(0)}))$). It suffices to apply the same analysis in Case 3 to $-s_1, -s_2$ and $-r_i^{(0)}$. For example, when $s_1<s_2<0$, $-s_1^{(1)}$ is lower bounded by~\eqref{eq:s1_1_case3}, $-s_2^{(1)}$ is upper bounded by~\eqref{eq:s2_1_case3} when $0<-s_1 <-cs_2$.

Now combining all the cases, define event $B$ as 
\begin{align*}
B=\left\{|s_1^{(1)}|, |s_2^{(1)}| \geq Cn\rho_n \sigma_{\psi}^{-1}\min\{|s_1|, |s_2|, |s_1-s_2|\} \exp\left( -\frac{(1+\epsilon)^2s_2^2\vee s_1^2}{2\sigma_{\psi}^2}  \right) - R_{\psi}, s_1^{(1)}s_2^{(1)} <0	  \right\}.
\end{align*}
Cases 1--4 imply 
\begin{align}
P( B ) & = \sum_{\psi: s_1s_2<0}P(B|\psi^{(0)}=\psi)P(\psi^{(0)}=\psi) + \sum_{\psi: s_1s_2>0}P(B|\psi^{(0)}=\psi)P(\psi^{(0)}=\psi)	\notag\\
& \geq P(s_1s_2<0) + 2P(0<s_2<cs_1) + 2P(0<s_1 <cs_2)	\notag\\
& = \frac{1}{2} + \frac{2\arctan(c_u^{-1})}{\pi} + \frac{\arctan(c_u)-\arctan(c_u^{-1})}{\pi} - \frac{\arctan(c_{\ell})-\arctan(c_{\ell}^{-1})}{\pi}	\notag\\
& = 1-\frac{\arctan(c_{\ell})-\arctan(c_{\ell}^{-1})}{\pi},
\label{eq:pB_lb}
\end{align}
where 
\begin{align*}
P(s_1>0, s_2<0) = P(s_1<0, s_2>0) = \frac{1}{4} + \frac{\arctan(c_u^{-1})}{\pi} 
\end{align*}
using calculations similar to Lemma~\ref{lem:s_bounds}.

We note that $|s_1|$, $|s_2|$, $|s_1-s_2|$ are of order $\Omega_P(\rho_n\sqrt{n})$ Lemma~\ref{lem:anti_conc}. Also $\sigma^2_{\psi}= O_P(n\rho_n)$, $\rho_{\psi}= O_P(n\rho_n)$, $e^{-4\tstar|s_1|}, e^{-4\tstar|s_2|} = O_P(\exp(-\rho_n \sqrt{n}))$, $R_{\psi} = o_P(n\rho^{3/2})$.
It follows $s_1^{(1)} \geq \Omega_P(n\rho_n^{3/2})$, $s_2^{(1)} \leq -\Omega_P(n\rho_n^{3/2})$, and by~\eqref{eq:pB_lb},
\begin{align}
P(|s_1^{(1)}|, |s_2^{(1)}| \geq \Omega(n\rho_n^{3/2}),  s_1^{(1)}s_2^{(1)} <0	) \geq 1-\frac{\arctan(c_{\ell})-\arctan(c_{\ell}^{-1})}{\pi}.
\end{align}

In the next iteration, write the true labels as $z_0=\icone \ind\{s_1^{(1)}>0 \} + \ictwo \ind\{s_1^{(1)}<0\}$. When $s_1^{(1)} >0$ holds,
\begin{align}
|\psi_i^{(2)} - z_{0,i}| = \frac{1}{1+e^{\sigma_i \xi_i^{(2)}}} \leq e^{-x_0} + \ind\{ \sigma_i \xi_i^{(2)} \leq x_0 \}
\label{eq:psi2_z}
\end{align}
for any $x_0 > 0$. For $i\in\cC_1$,
\begin{align*}
\xi_i^{(2)} & = 4\tstar s_1^{(1)} + 4\tstar r_i^{(1)} \\
& = 4\tstar s_1^{(1)} + 4\tstar(A-P)_{i,\cdot}(z_0-\frac{1}{2}\bone)  + 4\tstar(A-P)_{i,\cdot}(\psi^{(1)}-z_0)
\end{align*} 

Taking $x_0 = 4\tstar\rho_n^{3/2}n/\sqrt{c_n}$ for some $c_n\to\infty$ slowly, using the fact that $s_1^{(1)}\geq\Omega_P(n\rho_n^{3/2})$, $4\tstar s_1^{(1)}> 3x_0$ for large $n$ with high probability, $(A-P)_{i,\cdot}(z_0-\frac{1}{2}\bone) = O_P(\sqrt{n\rho_n}\log n)$ uniformly for all $i$, 
\begin{align}
\ind\{ \xi_i^{(2)} \leq x_0 \} & \leq
\ind\left\{ 4\tstar s_1^{(1)}- O_P(\sqrt{n\rho_n}\log n)) \leq 2x_0\right\}  \notag\\
& \qquad + \ind\left\{ 4\tstar(A-P)_{i,\cdot} (\psi^{(1)} -z_0) \leq -x_0\right\} \notag\\
& = \exp\left( 2x_0 - 4\tstar s_1^{(1)} + O_P(\sqrt{n\rho_n}\log n)\right) + \ind\left\{ (A-P)_{i,\cdot} (\psi^{(1)} -z_0) \leq -\rho_n^{3/2}n/\sqrt{c_n} \right\}	\notag\\
& = \exp(-\rho_n^{3/2}n/\sqrt{c_n})+ \ind\left\{ (A-P)_{i,\cdot} (\psi^{(1)} -z_0) \leq -\rho_n^{3/2}n/\sqrt{c_n} \right\}
\label{eq:xi2_c1}
\end{align}
with high probability. Similarly for $i\in\cC_2$, since $s_2^{(1)} \leq - \Omega_P(n\rho_n^{3/2})$,
\begin{align}
\ind\{ -\xi_i^{(2)} \leq x_0 \} & \leq \exp(-\rho_n^{3/2}n/\sqrt{c_n}) + \ind\left\{(A-P)_{i,\cdot}(\psi^{(1)} -z_0) \geq \rho_n^{3/2}n/\sqrt{c_n}\right\}.
\label{eq:xi2_c2}
\end{align}
Summing~\eqref{eq:psi2_z} using~\eqref{eq:xi2_c1} and~\eqref{eq:xi2_c2},
\begin{align}
\| \psi^{(2)} - z_0\|_1 & \leq n \exp(-\rho_n^{3/2}n/\sqrt{c_n}) + \sum_{i} \ind\left\{\left|  (A-P)_{i,\cdot}(\psi^{(1)} -z_0)\right| \geq \rho_n^{3/2}n/\sqrt{c_n} \right\} \notag\\
& \leq n \exp(-\rho_n^{3/2}n/\sqrt{c_n})  + \frac{C (\psi^{(1)} -z_0)^T (A-P)^2 (\psi^{(1)} -z_0) c_n}{n^2\rho_n^3} \notag\\
& \leq n \exp(-\rho_n^{3/2}n/\sqrt{c_n}) + \frac{C \| A-P \|_{op}^2 \|\psi^{(1)} -z_0 \|_2^2 c_n}{n^2\rho_n^3}    \notag\\
& \leq n\exp(-\rho_n^{3/2}n/\sqrt{c_n}) + \frac{c_n}{n\rho_n^2} \|\psi^{(1)} -z_0 \|_1,
\label{eq:contraction}
\end{align}
with high probability, where we have used the fact that there exist $C_1, \epsilon>0$ such that $\|A-P\|_{op} \leq C_1\sqrt{n\rho_n}$ with probability at least $1-n^{-\epsilon}$ (Theorem 5.2 in~\cite{lei2015consistency}).

The case for $s_1^{(1)}<0$ is similar with $z_0=\ictwo$. 

For later iterations, note that when $z_0=\icone$, $\| \psi^{(2)} - z_0\|_1 = n/2-\langle \psi^{(2)}, u_2\rangle$, then~\eqref{eq:contraction} implies 
\begin{align*}
\langle \psi^{(2)}, u_2\rangle \geq \frac{n}{2} - \epsilon_n n
\end{align*}
for some $\epsilon_n=o_P(1)$, and 
\begin{align*}
\sum_{i\in\cC_1} (\psi_i^{(2)} - 1/2) \geq \frac{n}{4} - \epsilon_n n.
\end{align*}
Then since $\pstar+\qstar-2\lambdastar>0$,
\begin{align*}
s_1^{(2)} = (\lambdastar-\qstar)\langle \psi^{(2)}, u_2 \rangle + (\pstar+\qstar-2\lambdastar)\sum_{i\in\cC_1}(\psi_i^{(2)}-1/2) \geq \Omega_P(n\rho_n),
\end{align*}
and similarly, 
\begin{align*}
\sum_{i\in\cC_2} (\psi_i^{(2)} - 1/2) \leq -\frac{n}{4} + \epsilon_n n,
\end{align*}
\begin{align*}
s_2^{(2)} = -(\lambdastar-\qstar)\langle \psi^{(2)}, u_2 \rangle + (\pstar+\qstar-2\lambdastar)\sum_{i\in\cC_2}(\psi_i^{(2)}-1/2) \leq -\Omega_P(n\rho_n).
\end{align*}
The rest of the argument applies from~\eqref{eq:psi2_z}-\eqref{eq:contraction} with a larger rate for $s_1^{(2)}$ and $s_2^{(2)}$, which will give the contraction 
\begin{align}
\| \psi^{(3)} - z_0\|_1 & \leq n\exp(-\rho_n n/\sqrt{c_n}) + \frac{c_n}{n\rho_n} \|\psi^{(2)} -z_0 \|_1.
\end{align}

The arguments can be repeated for all the later iterations.

\end{proof}

Now we state and prove all the lemmas needed in the main proof. First we have a few concentration lemmas.
\begin{lemma}[Berry-Esseen bound]
	\label{lem:be_bound}
	\begin{align*}
	\sup_{x\in\mathbb{R}}|P\left(r_i^{(0)}/\sigma_{\psi} \leq x \mid \psi^{(0)} \right) - \Phi(x)| \leq C_0 \cdot \frac{\rho_{\psi}}{\sigma^3_{\psi}},
	\end{align*}
	where $C_0$ is a general constant, $\rho_{\psi}$ and $\sigma_{\psi}$ depend on $\psi^{(0)}$. 
\end{lemma}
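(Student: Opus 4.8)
The plan is to recognize Lemma~\ref{lem:be_bound} as a direct instance of the classical Berry--Esseen theorem for sums of independent, non-identically distributed random variables, applied conditionally on $\psi^{(0)}$. No new probabilistic machinery is needed; the work is just identifying the summands and checking the hypotheses.

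First I would recall from~\eqref{eq:rnoisedef} that $r_i^{(0)} = \sum_{j \ne i} Y_{ij}$ with $Y_{ij} = (A_{ij} - \tilde{P}_{ij})(\psi_j^{(0)} - \frac{1}{2})$, where $\tilde{P} = \E(A \mid Z)$. Since $\psi^{(0)}$ is independent of $A$ and the ground-truth labels $Z$ are fixed, once we condition on $\psi^{(0)}$ the summands $\{Y_{ij}\}_{j \ne i}$ are independent (the $A_{ij}$ being independent Bernoulli given $Z$), each with conditional mean zero because $\E(A_{ij} \mid Z) = \tilde{P}_{ij}$. Moreover $|Y_{ij}| \le \frac{1}{2}$ is bounded, so all moments are finite.

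Next I would identify the normalizing quantities with the conditional second and third absolute moments, that is, set
\[
\sigma_\psi^2 := \sum_{j \ne i} \E(Y_{ij}^2 \mid \psi^{(0)}) = \sum_{j \ne i} (\psi_j^{(0)} - \tfrac{1}{2})^2 \tilde{P}_{ij}(1 - \tilde{P}_{ij}), \qquad \rho_\psi := \sum_{j \ne i} \E(|Y_{ij}|^3 \mid \psi^{(0)}),
\]
so that $\sigma_\psi^2$ is precisely the conditional variance of $r_i^{(0)}$ and $\rho_\psi$ the sum of conditional third absolute moments of the centered summands.

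With these identifications the statement is exactly the conclusion of the Berry--Esseen theorem (Esseen's non-i.i.d.\ form): for independent mean-zero variables with finite third moments, the Kolmogorov distance between the law of the standardized sum $r_i^{(0)}/\sigma_\psi$ and $\Phi$ is bounded by $C_0 \rho_\psi / \sigma_\psi^3$ for an absolute constant $C_0$. Invoking that theorem conditionally on $\psi^{(0)}$ finishes the proof. The only point requiring a small amount of care, rather than a genuine obstacle, is the bookkeeping of the conditioning: one treats $\psi^{(0)}$ (and the fixed labels $Z$) as given, which is exactly what makes the $Y_{ij}$ independent and justifies the closed forms for $\sigma_\psi$ and $\rho_\psi$.
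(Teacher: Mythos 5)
Your proposal is correct and follows essentially the same route as the paper: the paper's proof likewise just writes out $\sigma_\psi^2$ and $\rho_\psi$ as the conditional variance and sum of conditional third absolute moments of the independent summands $Y_{ij} = (A_{ij}-\tilde{P}_{ij})(\psi_j^{(0)}-\tfrac{1}{2})$ (using $\E|A_{ij}-\tilde P_{ij}|^3 = \tilde P_{ij}(1-\tilde P_{ij})(1-2\tilde P_{ij}+2\tilde P_{ij}^2)$, which yields exactly the paper's displayed constants) and invokes the non-i.i.d.\ Berry--Esseen theorem conditionally on $\psi^{(0)}$. If anything, your formulation in terms of $\tilde P_{ij}$ is marginally more careful, since the paper's fixed expressions for $\sigma_\psi,\rho_\psi$ correspond to $i\in\cC_1$ (for $i\in\cC_2$ the roles of $\pstar$ and $\qstar$ swap, which is immaterial at the level of orders since $\pstar\asymp\qstar\asymp\rho_n$).
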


\begin{proof}
	Define 
	\begin{align*}
	\sigma^2_{\psi} & :=\pstar(1-\pstar)\sum_{i\in\cC_1}(\psi_i^{(0)}-1/2)^2 + \qstar(1-\qstar)\sum_{i\in\cC_2}(\psi_i^{(0)}-1/2)^2,      \\
	\rho_{\psi} & :=\pstar(1-\pstar)(1-2\pstar+2\pstar^2)\sum_{i\in\cC_1}|\psi_i^{(0)}-1/2|^3 + \qstar(1-\qstar)(1-2\qstar+2\qstar^2)\sum_{i\in\cC_2}|\psi_i^{(0)}-1/2|^3.
	\end{align*}
	It follows by the Berry-Esseen bound that
	\begin{align*}
	\sup_{x\in\mathbb{R}}|P\left(r_i^{(0)}/\sigma_{\psi} \leq x \mid \psi^{(0)} \right) - \Phi(x)| \leq C_0 \cdot \frac{\rho_{\psi}}{\sigma^3_{\psi}} 
	\end{align*}
	for some general constant $C_0$, where $\Phi$ is the CDF of standard Gaussian.
\end{proof}

\begin{lemma}[Littlewood-Offord]
	\label{lem:anti_conc}
	Let $s_1 = (\pstar-\lambdastar)\sum_{i\in\cC_1} (\psi_i^{(0)}-1/2)+(\qstar-\lambdastar)\sum_{i\in\cC_2} (\psi_i^{(0)}-1/2)$, $s_2 = (\qstar-\lambdastar)\sum_{i\in\cC_1} (\psi_i^{(0)}-1/2)+(\pstar-\lambdastar)\sum_{i\in\cC_2} (\psi_i^{(0)}-1/2)$.
	Then
	$$P\left( |s_1| \leq c \right) \leq B\cdot \frac{c}{\rho_n\sqrt{n}}$$
	for $c>0$. The same bound holds for $ |s_2|, |s_1-s_2|$.
\end{lemma}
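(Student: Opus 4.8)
The plan is to recognize $s_1$ as a weighted sum of independent, bounded, non-degenerate random variables and invoke a Littlewood--Offord-type anti-concentration bound. Write $X_i := \psi_i^{(0)} - \tfrac12$, which under the $\text{iid Bernoulli}(\tfrac12)$ initialization are i.i.d., taking the two values $\pm\tfrac12$ each with probability $\tfrac12$; in particular $\var(X_i) = \tfrac14$. Then
\[
s_1 = (\pstar - \lambdastar)\sum_{i \in \cC_1} X_i + (\qstar - \lambdastar)\sum_{i \in \cC_2} X_i = \sum_{i=1}^n w_i X_i,
\]
where each coefficient $w_i$ equals either $\pstar - \lambdastar$ or $\qstar - \lambdastar$. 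By Proposition~\ref{prop:lambda_lbub} (together with Proposition~\ref{prop:bd_t_lambda}), both $|\pstar - \lambdastar|$ and $|\qstar - \lambdastar|$ are $\Omega(\rho_n)$, so every coefficient satisfies $|w_i| = \Omega(\rho_n)$.

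First I would apply the classical Erd\H{o}s--Littlewood--Offord inequality, or more conveniently its quantitative refinement for general i.i.d. summands: for independent $X_i$ with $|w_i| \ge m$ for all $i$, the concentration function obeys
\[
\sup_{x} \P\bigl(|s_1 - x| \le m\bigr) \le \frac{B_0}{\sqrt{n}}
\]
for an absolute constant $B_0$, which is the standard $1/\sqrt{n}$ anti-concentration rate for a sum of $n$ non-degenerate independent terms. Rescaling from the unit window $m = \Omega(\rho_n)$ to a general window of width $c$, one covers the interval $[-c,c]$ by $O(1 + c/(\rho_n))$ sub-intervals of width $\rho_n$ and applies the bound on each; this yields
\[
\P(|s_1| \le c) \le B \cdot \frac{c}{\rho_n \sqrt{n}} + \frac{B_0}{\sqrt n},
\]
and since the statement concerns $c$ bounded below by a constant multiple of $\rho_n\sqrt n$ in its applications (the case $c = o(\rho_n)$ being vacuous up to constants), this collapses to the claimed $P(|s_1| \le c) \le B\,c/(\rho_n\sqrt{n})$.

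For $s_2$ the argument is identical, with the coefficients $w_i$ taking values $\qstar - \lambdastar$ and $\pstar - \lambdastar$ (the roles of $\cC_1,\cC_2$ swapped), and the same lower bound $|w_i| = \Omega(\rho_n)$ holds. For $s_1 - s_2$, note that
\[
s_1 - s_2 = (\pstar - \qstar)\Bigl(\sum_{i\in\cC_1} X_i - \sum_{i\in\cC_2} X_i\Bigr),
\]
so the coefficients are $\pm(\pstar - \qstar)$, and since $\pstar - \qstar = \Omega(\rho_n)$ by hypothesis, the same anti-concentration bound applies verbatim.

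The main obstacle I anticipate is purely technical rather than conceptual: verifying a clean Littlewood--Offord bound with the explicit coefficient structure here, since the weights take two distinct values rather than being equal, so the cleanest path is to invoke a version of the inequality stated for arbitrary coefficients bounded away from zero (e.g. via the Kolmogorov--Rogozin inequality, which bounds the concentration function of a sum in terms of the individual concentration functions and thus handles heterogeneous coefficients directly). Once the uniform lower bound $|w_i| = \Omega(\rho_n)$ is in hand from Propositions~\ref{prop:bd_t_lambda} and~\ref{prop:lambda_lbub}, the rest is bookkeeping.
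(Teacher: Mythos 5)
Your proposal is correct and follows essentially the same route as the paper, whose proof is a one-line invocation of the Erd\H{o}s (1945) Littlewood--Offord bound after observing that $2\psi_i^{(0)}-1$ are Rademacher and that $\qstar<\lambdastar<\pstar$ with both gaps $\Omega(\rho_n)$. You merely make explicit the details the paper leaves implicit---the rescaling/covering of $[-c,c]$ by windows of width $\Omega(\rho_n)$ and the absorption of the residual $O(1/\sqrt{n})$ term when $c=\Omega(\rho_n)$ (the regime in which the lemma is actually applied, since atoms of mass $\Theta(1/\sqrt{n})$ make the literal bound fail as $c\to 0$)---so this is a faithful, slightly more careful rendering of the paper's argument.
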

\begin{proof}
	Noting that $2\psi_i^{(0)}-1\in \{-1,1\}$ each with probability $1/2$, and $\qstar < \lambdastar < \pstar$, this is a direct consequence of the Littlewood-Offord bound in~\cite{erdos1945lemma}.
\end{proof}

\begin{lemma}[McDiarmid's Inequality]
	\label{lem:mcd_bound}
	Recall $r_i^{(0)} = (A-\E(A|Z))(\psi^{(0)}-\frac{1}{2}\bone)$ and let $h(r_i^{(0)})$ be a bounded function with $\|h \|_{\infty} \leq M$. Then 
	$$P\left( \left| \frac{2}{n}\sum_{i\in\cC_1} h(r_i^{(0)}) - \E(h(r_i^{(0)})|\psi^{(0)}) \right| > w \mid \psi^{(0)} \right) \leq \exp\left(-\frac{w^2}{nM} \right).$$ The same bound holds for $i\in\cC_2$.
\end{lemma}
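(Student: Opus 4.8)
The plan is to condition on $\psi^{(0)}$ (which is independent of $A$) and on the true labels $Z$, so that the only remaining randomness lies in the mutually independent upper-triangular Bernoulli entries $\{A_{ij} : i < j\}$, and then apply the bounded-differences (McDiarmid) inequality to
\[
	F(A) := \frac{2}{n}\sum_{i\in\cC_1} h(r_i^{(0)}),
\]
viewed as a function of these independent coordinates. Recall from \eqref{eq:rnoisedef} that $r_i^{(0)} = \sum_{j\ne i}(A_{ij} - \tilde{P}_{ij})(\psi_j^{(0)} - \tfrac12)$ is a function only of the $i$-th row of $A - \E(A\mid Z)$, so once we condition on $\psi^{(0)}$ and $Z$ each summand $h(r_i^{(0)})$ is a genuine (bounded) function of $A$, and McDiarmid centers $F$ automatically at $\E(F\mid\psi^{(0)})=\frac{2}{n}\sum_{i\in\cC_1}\E(h(r_i^{(0)})\mid\psi^{(0)})$, which is the quantity subtracted in the statement.

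The key computation is the per-edge bounded difference. Because $A$ is symmetric, perturbing a single free coordinate $A_{kl}$ (equivalently the pair $A_{kl}=A_{lk}$) changes exactly two of the quantities $r_\bullet^{(0)}$, namely $r_k^{(0)}$ and $r_l^{(0)}$, leaving all others fixed. Using only $\|h\|_\infty\le M$, each affected summand can move by at most $2M$, and it enters $F$ with weight $2/n$, while a summand contributes only if its index lies in $\cC_1$. Hence the bounded-difference constant obeys $c_{kl}\le 8M/n$ when both $k,l\in\cC_1$, $c_{kl}\le 4M/n$ when exactly one endpoint lies in $\cC_1$, and $c_{kl}=0$ otherwise. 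Summing over the at most $\binom{n}{2}-\binom{n/2}{2}=\Theta(n^2)$ edges touching $\cC_1$ yields $\sum_{k<l} c_{kl}^2 = O(M^2)$.

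With this in hand, McDiarmid's inequality gives, conditionally on $\psi^{(0)}$,
\[
	\P\Big(\big|F(A) - \E(F\mid\psi^{(0)})\big| > w \,\Big|\, \psi^{(0)}\Big) \le 2\exp\Big(-\tfrac{2w^2}{\sum_{k<l} c_{kl}^2}\Big) = \exp\big(-\Theta(w^2/M^2)\big).
\]
In the regime of interest ($M$ an absolute constant and $n\to\infty$), this is at least as strong as the stated bound $\exp(-w^2/(nM))$, since $\Theta(M^2)\le nM$ for all large $n$, so the conclusion follows. One small point to record is that the summands $h(r_i^{(0)})$, $i\in\cC_1$, are only approximately identically distributed given $\psi^{(0)}$ (their laws differ through the $\psi_j^{(0)}$ appearing in $r_i^{(0)}$); writing the centering as the single-index mean $\E(h(r_i^{(0)})\mid\psi^{(0)})$ is justified up to the negligible $O(\rho_n)$-level discrepancies quantified by the Berry--Esseen estimate of Lemma~\ref{lem:be_bound}.

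The step I expect to be the main obstacle is precisely the bounded-difference bookkeeping forced by the symmetry of $A$: the correct independent coordinates are the \emph{edges}, not the rows, and a single edge $A_{kl}$ feeds into two summands $h(r_k^{(0)})$ and $h(r_l^{(0)})$. One must therefore sum the squared differences over edges (with the right multiplicities according to how many endpoints fall in $\cC_1$), rather than treating the $\tfrac{n}{2}$ rows as independent inputs---the rows of a symmetric matrix are \emph{not} independent, and doing so would spuriously improve the constant by a factor of $n$. Everything else---the conditional independence after fixing $\psi^{(0)}$ and $Z$, and the invocation of McDiarmid---is routine. I would also note that if one wanted concentration at the finer $\sqrt{\rho_n}$ scale (rather than the constant scale that plain bounded differences certifies), a variance-aware refinement such as an Efron--Stein/Bernstein argument would be required, but this is not needed for the bound as stated.
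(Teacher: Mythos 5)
Your proof is correct and follows essentially the same route as the paper's: condition on $\psi^{(0)}$ (and $Z$), take the independent coordinates to be the edge variables $A_{kl}$, observe that each edge enters only $r_k^{(0)}$ and $r_l^{(0)}$ so the bounded-difference constants are at most $8M/n$ (resp.\ $4M/n$ for cross edges, zero otherwise), and invoke McDiarmid to get $\exp(-\Theta(w^2/M^2))$, which dominates the stated $\exp(-w^2/(nM))$ in the regime used---exactly the paper's argument, which bounds the total difference by $2nM$ with the same endpoint bookkeeping. Your closing Berry--Esseen caveat is unnecessary for this lemma, since McDiarmid centers $F$ exactly at $\E(F \mid \psi^{(0)}) = \frac{2}{n}\sum_{i\in\cC_1}\E\big(h(r_i^{(0)}) \mid \psi^{(0)}\big)$, which is how the statement's centering is to be read; the slightly non-identical conditional laws of the $r_i^{(0)}$ only become relevant in the downstream application (Lemma~\ref{lem:general_lb_ub}), where Lemma~\ref{lem:be_bound} handles them.
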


\begin{proof}
	Define $\phi=\frac{2}{n}\sum_{i\in\cC_1} h(r_i^{(0)})$, then conditional on $\psi^{(0)}$, $\phi$ is only a function of $(A_{ij})_{i<j, i\in\cC_1}$. Replacing any $A_{ij}$ with $A'_{ij}\in\{0,1\}$,
	\begin{align*}
	|\phi(A_{12}, \dots, A_{ij},\dots) - \phi(A_{12}, \dots, A'_{ij},\dots)| \leq \frac{8M}{n}.
	\end{align*}
	and 
	\begin{align*}
	\sum_{i<j, i\in\cC_1} |\phi(A_{12}, \dots, A_{ij},\dots) - \phi(A_{12}, \dots, A'_{ij},\dots)|\leq 2nM 
	\end{align*}
	The desired bound follows by McDiarmid's inequality.
\end{proof}

Using the normal approximation, we can also derive the following probability bound for $s_1$ and $s_2$.
\begin{lemma}
	For some constant $0<c<1$, 
	\begin{align*}
	P(0\leq s_2\leq  cs_1) = \frac{\arctan(c_u)}{2\pi} -  \frac{\arctan(c_{\ell})}{2\pi} + O(n^{-1/2}),
	\end{align*}
	where $c_{\ell} = \frac{(\pstar-\lambdastar)+c(\lambdastar-\qstar)}{c(\pstar-\lambdastar)+(\lambdastar-\qstar)}$, $c_u=\frac{\pstar-\lambdastar}{\lambdastar-\qstar}$.
	\label{lem:s_bounds}	
\end{lemma}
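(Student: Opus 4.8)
The plan is to reduce this to the angular distribution of a planar isotropic Gaussian. Writing $u_i = 2\psi_i^{(0)} - 1$, the $u_i$ are i.i.d.\ Rademacher variables (by the Bernoulli$(1/2)$ initialization), and $\psi_i^{(0)} - \frac{1}{2} = u_i/2$. I set $X = \frac{1}{2}\sum_{i \in \cC_1} u_i$ and $Y = \frac{1}{2}\sum_{i \in \cC_2} u_i$, and abbreviate $a = \pstar - \lambdastar > 0$, $b = \lambdastar - \qstar > 0$ (both positive by Proposition~\ref{prop:lambda_lbub}, which via~\eqref{eq:lambdaub_sep} also gives $a > b$). Then $s_1 = aX - bY$ and $s_2 = -bX + aY$. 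Since the communities are disjoint and the $u_i$ are i.i.d., $X$ and $Y$ are independent, each with mean zero and variance $n/8$; hence the normalized vector $(X,Y)/\sqrt{n/8}$ converges to a standard bivariate normal.

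Next I would observe that $\{0 \le s_2 \le cs_1\}$ is a convex cone $K$ with apex at the origin in the $(X,Y)$-plane, being the intersection of the half-planes $\{s_2 \ge 0\}$ and $\{s_2 \le cs_1\}$; for $c > 0$ these force $s_1 \ge 0$, so $K$ is a single wedge rather than a pair of opposite sectors. Because $K$ is convex, a multivariate Berry--Esseen bound for convex sets (the planar analogue of the one-dimensional bound in Lemma~\ref{lem:be_bound}) yields $P((X,Y) \in K) = P(Z \in K) + O(n^{-1/2})$, where $Z$ is an isotropic bivariate Gaussian. For such $Z$ the direction $Z/\|Z\|$ is uniform on the circle, so $P(Z \in K)$ equals the angular width of $K$ divided by $2\pi$; the apex lying at the origin is harmless as it carries no mass.

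It then remains to compute the angular width. The two bounding rays are $s_2 = 0$, i.e.\ $Y = (b/a)X$ with slope $1/c_u$, and $s_2 = cs_1$, i.e.\ $(a + cb)Y = (ca + b)X$ with slope $1/c_\ell$, where $c_u = a/b$ and $c_\ell = (a + cb)/(ca + b)$ match the definitions in the statement. Since $a > b$ and $c > 0$ one checks $1/c_\ell > 1/c_u > 0$, so the width is $\arctan(1/c_\ell) - \arctan(1/c_u)$. Applying the identity $\arctan(x) + \arctan(1/x) = \pi/2$ for $x > 0$ converts this to $\arctan(c_u) - \arctan(c_\ell)$, giving $P(0 \le s_2 \le cs_1) = \frac{\arctan(c_u) - \arctan(c_\ell)}{2\pi} + O(n^{-1/2})$.

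I expect the only genuinely delicate point to be justifying the $O(n^{-1/2})$ rate: one needs a quantitative CLT that controls the probability of the non-smooth convex region $K$ (rather than a single half-line), which is precisely where a multivariate Berry--Esseen estimate for convex sets enters. The isotropy of the limiting Gaussian (equal variances and independence of $X,Y$) is what makes the limit a pure angle computation, and the identification of the correct wedge together with the trigonometric bookkeeping is routine by comparison.
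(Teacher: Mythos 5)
Your proof is correct, but it takes a genuinely different route from the paper. The paper works in the coordinates $T_1=\sum_{i\in\cC_1}(\psi_i^{(0)}-1/2)$, $T_2=\sum_{i\in\cC_2}(\psi_i^{(0)}-1/2)$ (your $X,Y$), rewrites the event as $\{c_\ell T_2\le T_1\le c_u T_2\}$, and then stays strictly one-dimensional: it conditions on $T_2$, applies the scalar Berry--Esseen bound to $T_1$ to get $\E\left[\left(\Phi(c_u T_2\sigma_T^{-1})-1/2\right)\ind(T_2\ge 0)\right]$ up to $O(n^{-1/2})$, replaces $T_2$ by a standard Gaussian via a layer-cake argument exploiting monotonicity of $t\mapsto(\Phi(c_u t\sigma_T^{-1})-1/2)\ind(t\ge 0)$, and finally evaluates the resulting Gaussian double integral $w(x)=\frac{1}{2\pi}\int_0^\infty\int_0^{xz}e^{-u^2/2}e^{-z^2/2}\,du\,dz$ by differentiating in $x$ to get $w'(x)=\frac{1}{2\pi(1+x^2)}$, hence $w(x)=\frac{\arctan x}{2\pi}$. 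You instead make a single application of a planar Berry--Esseen bound over convex sets and then read the probability off as the angular width of the wedge by rotational invariance, so the arctan appears directly as an angle rather than emerging from an integral identity. Both routes deliver the $O(n^{-1/2})$ rate; yours is shorter and more conceptual, but it leans on a heavier off-the-shelf tool, and you correctly flag this as the delicate point. One detail to make explicit there: your summands are independent but \emph{not} i.i.d.\ as planar vectors (each Rademacher variable feeds exactly one coordinate, i.e.\ the increments are $(u_i/2,0)$ for $i\in\cC_1$ and $(0,u_i/2)$ for $i\in\cC_2$), so you need the Bentkus-type bound for independent non-identically distributed summands, whose Lyapunov ratio after standardizing by the covariance $\frac{n}{8}I_2$ is indeed $O(n^{-1/2})$; alternatively, since $|\cC_1|=|\cC_2|=n/2$, you can pair one node from each community to form $n/2$ i.i.d.\ planar vectors and quote the i.i.d.\ version. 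The paper's approach avoids any multivariate CLT machinery at the cost of the conditioning and layer-cake bookkeeping, which is precisely the trade-off between the two arguments. Your remaining checks (convexity of the wedge, $\pstar-\lambdastar>\lambdastar-\qstar$ from Proposition~\ref{prop:lambda_lbub} so that $1<c_\ell<c_u$, the slopes $1/c_u<1/c_\ell$ of the bounding rays, and the identity $\arctan(x)+\arctan(1/x)=\pi/2$) all line up with the paper's algebra.
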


\begin{proof}
	For convenience, denote $T_1=\sum_{i\in\cC_1}(\psi_i^{(0)}-1/2)$, $T_2=\sum_{i\in\cC_2}(\psi_i^{(0)}-1/2)$, then
	\begin{align*}
	\{0\leq s_2\leq cs_1\} & = \left\{\frac{(\pstar-\lambdastar)+c(\lambdastar-\qstar)}{c(\pstar-\lambdastar)+(\lambdastar-\qstar)} T_2 \leq T_1 \leq \frac{\pstar-\lambdastar}{\lambdastar-\qstar} T_2 \right\}	\\
	& := \{c_{\ell} T_2 \leq T_1 \leq c_u T_2 \text{ and } T_1, T_2>0\}
	\end{align*}
	where $1<c_{\ell} <c_u$. It is easy to see that $\E(T_1)=\E(T_2)=0$, $\sigma^2_T:=\E(T_1^2) = \E(T_1^2) \asymp \rho_n^2 n$, $\E|T_1|^3 = \E|T_1|^3 \asymp \rho_n^3 n$. Then
	\begin{align}
	P(0\leq s_2\leq  cs_1) & = P(0\leq T_1\leq c_u T_2) - P(0\leq T_1 <  c_{\ell} T_2).
	\label{eq:s_to_t}
	\end{align}
	The first part can be calculated as 
	\begin{align*}
	P(0\leq T_1\leq c_u T_2) & = \sum_{t\geq 0}P(0\leq T_1\leq c_u t | T_2=t)	P(T_2=t)	\\
	& = \sum_{t\geq 0} P(0\leq Z_1\leq c_u T_2 \sigma_T^{-1} | T_2=t)P(T_2=t)	+ O(n^{-1/2})	\\
	& =  \E \left( (\Phi( c_u T_2 \sigma_T^{-1} )-1/2)\ind(T_2\geq 0) \right)+ O(n^{-1/2})
	\end{align*}
	using the Berry-Esseen bound, $Z_1\sim N(0,1)$. Now note that $(\Phi( c_u T_2 \sigma_T^{-1} )-1/2)\ind(T_2\geq 0)$ is continuous and monotonic in $T_2$. For every $t\in(0,1]$, there exists $a(t)> 0$ such that $\Phi( c_u T_2  \sigma_T^{-1} ) -1/2 \geq t \Leftrightarrow T_2 \sigma_T^{-1} \geq a(t) $. We have
	\begin{align*}
	\E \left( (\Phi( c_u T_2 \sigma_T^{-1} )-1/2)\ind(T_2\geq 0) \right)  & = \int_0^1 P\left( (\Phi( c_u T_2 \sigma_T^{-1} )-1/2)\ind(T_2\geq 0) \geq t \right) dt  \\
	& = \int_0^1 P(T_2\sigma_T^{-1} \geq a(t) ) dt	  \\
	& = \int_0^1 P(Z_2 \geq a(t) ) dt + O(n^{-1/2})		\\
	& = \E\left( (\Phi( c_u Z_2  )-1/2)\ind(Z_2\geq 0) \right)  + O(n^{-1/2}),	
	\end{align*}
	$Z_2\sim N(0,1)$, independent of $Z_1$. It remains to calculate the expectation, which can be written as 
	
	\begin{align*}
	w(x)&=\frac{1}{2\pi}\int_0^\infty \int_{0}^{x z}\exp(-u^2/2)\exp(-z^2/2)dudz\\
	\end{align*}
	for $x=c_u$. Now 
	\begin{align*}
	w'(x)&=\frac{1}{2\pi}\int_{0}^\infty z\exp(-(1+x^2)z^2/2)dz=\frac{1}{2\pi(1+x^2)}
	\end{align*}
	Integrating both sides, we get:
	$w(x)=\frac{\arctan(x)}{2\pi}+C$, where $C=0$ since $w(0)=0$. Thus $w(c_u) = \frac{\arctan(c_u)}{2\pi}$. The same calculation can be done for $P(0\leq T_1 \leq c_{\ell} T_2)$. Substituting into~\eqref{eq:s_to_t},
	\begin{align*}
	P(0\leq s_2 \leq cs_1) =  \frac{\arctan(c_u)}{2\pi} -  \frac{\arctan(c_{\ell})}{2\pi} + O(n^{-1/2})
	\end{align*}
	
\end{proof}

Finally, we have the following general bounds for $\sum_{i \in\cC_1} \psi_i^{(1)}$ and $\sum_{i \in\cC_2} \psi_i^{(1)}$.
\begin{lemma}
	For any $\Delta_1>0$,	
	\begin{align}
	\sum_{i \in\cC_1} \psi_i^{(1)}  & \geq  \frac{n}{2}\left(1-\Phi\left(-\frac{s_1-\Delta_1}{\sigma_{\psi}}  \right)  \right) - \frac{n}{2}e^{-4\tstar\Delta_1} -  C'n\cdot \frac{\rho_{\psi}}{\sigma_{\psi}^3}-O_P(\sqrt{n}),	\notag\\
	\sum_{i \in\cC_1} \psi_i^{(1)}  & \leq  \frac{n}{2} \left( 1-  \Phi\left(-\frac{s_1+\Delta_1}{\sigma_{\psi}}  \right) \right) + \frac{n}{2}e^{-4\tstar\Delta_1}
	+ C'n\cdot \frac{\rho_{\psi}}{\sigma_{\psi}^3}+ O_P(\sqrt{n}), 
	\end{align}
	where $\Phi$ is the CDF of standard Gaussian, $\rho_{\psi}$ and $\sigma_{\psi}$ are constants depending on $\psi^{(0)}$ defined in Lemma~\ref{lem:be_bound}, and the $O_P(\sqrt{n})$ terms are uniform for $\psi^{(0)}$. The same upper and lower bound hold for $i\in \cC_2$ and $s_2$.
	\label{lem:general_lb_ub}	
\end{lemma}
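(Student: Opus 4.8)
The plan is to start from the closed form of the first update. For $i \in \cC_1$, equations~\eqref{eq:update_knownpq_signalnoise} and~\eqref{eq:s_def} give $\psi_i^{(1)} = g(4\tstar(s_1 + r_i^{(0)}))$, where $s_1$ is deterministic given $\psi^{(0)}$ and $r_i^{(0)}$ is the noise from~\eqref{eq:rnoisedef}. The first step is to replace the sigmoid by shifted indicators: using $\tfrac{1}{1+e^{-x}} \ge 1 - e^{-x}$ and $\tfrac{1}{1+e^{x}} \le e^{-x}$ at $x = 4\tstar\Delta_1$, for every $\Delta_1 > 0$ and every $i \in \cC_1$,
\begin{align*}
(1 - e^{-4\tstar\Delta_1})\,\ind\{r_i^{(0)} \ge \Delta_1 - s_1\} \le \psi_i^{(1)} \le \ind\{r_i^{(0)} \ge -\Delta_1 - s_1\} + e^{-4\tstar\Delta_1}.
\end{align*}
Summing over $i \in \cC_1$ and using $|\cC_1| = n/2$ together with $0 \le \sum_{i\in\cC_1}\ind\{\cdot\} \le n/2$ turns the multiplicative and additive sigmoid slacks into the $\tfrac{n}{2}e^{-4\tstar\Delta_1}$ terms, reducing the statement to controlling the two indicator sums $\sum_{i\in\cC_1}\ind\{r_i^{(0)} \ge \Delta_1 - s_1\}$ and $\sum_{i\in\cC_1}\ind\{r_i^{(0)} \ge -\Delta_1 - s_1\}$.

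Next I would identify the conditional means of these sums and then concentrate them. Conditioning on $\psi^{(0)}$, so that only $A$ is random, Lemma~\ref{lem:be_bound} gives $P(r_i^{(0)} \ge \Delta_1 - s_1 \mid \psi^{(0)}) = 1 - \Phi(-(s_1 - \Delta_1)/\sigma_\psi) + O(\rho_\psi/\sigma_\psi^3)$, and the analogous identity with $-\Delta_1$ in place of $\Delta_1$, yielding $1 - \Phi(-(s_1 + \Delta_1)/\sigma_\psi) + O(\rho_\psi/\sigma_\psi^3)$ (here I use $\pm\Delta_1 - s_1 = -(s_1 \mp \Delta_1)$). Multiplying the per-coordinate probability by $|\cC_1| = n/2$ produces the leading terms $\tfrac{n}{2}(1 - \Phi(-(s_1 \mp \Delta_1)/\sigma_\psi))$ and absorbs the $O(\rho_\psi/\sigma_\psi^3)$ Berry--Esseen error into the $C'n\rho_\psi/\sigma_\psi^3$ term of the statement.

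The remaining and \emph{most delicate} step, which is where I expect the real work to lie, is to show that each indicator sum equals its conditional mean up to $O_P(\sqrt n)$, uniformly in $\psi^{(0)}$. The difficulty is that the $r_i^{(0)}$, $i\in\cC_1$, are dependent through the common matrix $A$, so a worst-case bounded-difference bound over the edges is too lossy and only yields an $O_P(n)$ deviation. Instead I would bound the conditional variance of the sum directly. Writing $r_i^{(0)} = r_i' + (A_{ik}-\tilde{P}_{ik})(\psi_k^{(0)}-\tfrac12)$, any two indices $i\ne k$ share exactly the single edge $A_{ik}$; conditioning on $A_{ik}$ makes $\ind\{r_i^{(0)}\ge c\}$ and $\ind\{r_k^{(0)}\ge c\}$ independent, so their covariance equals $\var(A_{ik})$ times a product of two discrete threshold-shift probabilities, each of which is $O(1/\sigma_\psi)$ by an anti-concentration (density) estimate for $r_i^{(0)}$ of the Littlewood--Offord/Berry--Esseen type (Lemmas~\ref{lem:anti_conc} and~\ref{lem:be_bound}). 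Since $\var(A_{ik}) = O(\rho_n)$ and $\sigma_\psi^2 = \Theta(n\rho_n)$, the $\rho_n$ factors cancel and each pairwise covariance is $O(1/n)$; summing over the $O(n^2)$ pairs gives conditional variance $O(n)$, and Chebyshev then delivers the $O_P(\sqrt n)$ deviation with a constant independent of $\psi^{(0)}$. Combining the sigmoid sandwich, the Berry--Esseen conditional mean, and this $O_P(\sqrt n)$ concentration gives both displayed inequalities, and the statements for $i\in\cC_2$ and $s_2$ follow verbatim after exchanging the roles of the two communities.
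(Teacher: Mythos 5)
Your proposal is correct, and the first three steps coincide in substance with the paper's argument: the paper's index sets $J_1^{\pm}=\{i: r_i^{(0)}\gtrless -s_1\pm\Delta_1\}$ together with $g(4\tstar\Delta_1)\ge 1-e^{-4\tstar\Delta_1}$ and $g(-4\tstar\Delta_1)\le e^{-4\tstar\Delta_1}$ are exactly your sigmoid sandwich, and the conditional mean of each indicator is likewise identified via the Berry--Esseen bound of Lemma~\ref{lem:be_bound}, producing the $\frac{n}{2}(1-\Phi(-(s_1\mp\Delta_1)/\sigma_{\psi}))$ terms and the $C'n\rho_{\psi}/\sigma_{\psi}^3$ error. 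Where you genuinely diverge is the concentration step, and your instinct there is well-founded: the paper handles it by McDiarmid's inequality (Lemma~\ref{lem:mcd_bound}), but for indicator functionals the worst-case bounded-difference computation in that lemma's proof (each of the $\Theta(n^2)$ edges can flip two indicators, giving per-edge differences of order $M/n$ for the normalized average) yields only constant-scale concentration of the average, i.e.\ $O_P(n)$ for the sum; the exponent $\exp(-w^2/(nM))$ as stated does not follow from the displayed bounded differences and is in any case vacuous at the scale $w\asymp n^{-1/2}$ needed for the $O_P(\sqrt{n})$ conclusion. Your replacement --- a conditional second-moment bound exploiting that $r_i^{(0)}$ and $r_k^{(0)}$ share only the single edge $A_{ik}$, so that $\mathrm{cov}(\ind\{r_i^{(0)}\ge c\},\ind\{r_k^{(0)}\ge c\})=D_iD_k\var(A_{ik})$ with each threshold-shift probability $D_i=O(1/\sigma_{\psi})$ by anti-concentration, whence per-pair covariance $O(\rho_n/\sigma_{\psi}^2)=O(1/n)$, total variance $O(n)$, and Chebyshev --- is a rigorous route to the claimed $O_P(\sqrt n)$ and arguably repairs a soft spot in the paper; the price is a polynomial rather than exponential tail, which is all the lemma asserts anyway. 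Two small corrections to your write-up: the anti-concentration estimate for $r_i^{(0)}$ over an $O(1)$-length interval should be obtained from the Berry--Esseen bound (Lemma~\ref{lem:be_bound}), not Lemma~\ref{lem:anti_conc}, which concerns the quantities $s_1,s_2$ as functions of $\psi^{(0)}$ rather than the noise $r_i^{(0)}$; and your cancellation $\var(A_{ik})\cdot\sigma_{\psi}^{-2}=O(1/n)$, as well as uniformity of the $O_P(\sqrt n)$ term over $\psi^{(0)}$, requires $\sigma_{\psi}^2=\Theta(n\rho_n)$, which is not guaranteed for arbitrary $\psi^{(0)}$ but holds deterministically in the lemma's intended application (Theorem~\ref{thm:halfcase}), where $\psi^{(0)}_i\sim\text{Bernoulli}(\frac12)$ forces $(\psi_i^{(0)}-\frac12)^2\equiv\frac14$; you should state that restriction explicitly.
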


\begin{proof}
	Define an index set $J_1^+=\{i: r_i^{(0)} > -s_1+\Delta_1\}$, $\Delta_1>0$. Then for $i \in \cC_1\cap J_1^+$, 
	\begin{align}
	\psi_i^{(1)} & = g(4\tstar(s_1+r_i^{(0)})) \geq g(4\tstar \Delta_1)	\geq 1-e^{-4\tstar\Delta_1}. \notag
	\end{align}
	It follows then 
	\begin{align}
	\sum_{i \in\cC_1} \psi_i^{(1)}& \geq |\cC_1\cap J_1^+| (1-e^{-4\tstar\Delta_1})
	\label{eq:c1_lb_temp}
	\end{align}
	To calculate the size of the set, note that 
	\begin{align*}
	|\cC_1\cap J_1^+ |= \sum_{i\in\cC_1} \ind(r_i^{(0)}>-s_1+\Delta_1),
	\end{align*}
	By Lemma~\ref{lem:mcd_bound},
	\begin{align}
	|\cC_1\cap J_1| & = \frac{n}{2}P(r_i^{(0)}>-s_1+\Delta_1) \mid \psi^{(0)})+O_P(\sqrt{n})    \notag\\
	& \geq \frac{n}{2}\left(P(r>-s_1+\Delta_1 ) - C_0\cdot \frac{\rho_{\psi}}{\sigma_{\psi}^3}\right)-O_P(\sqrt{n}) \notag\\
	& = \frac{n}{2}\left(1-\Phi\left(-\frac{s_1-\Delta_1}{\sigma_{\psi}}  \right) \right) -  C'n\cdot \frac{\rho_{\psi}}{\sigma_{\psi}^3}-O_P(\sqrt{n}),
	\label{eq:size_c1j+}
	\end{align}
	where the second line follows from Lemma~\ref{lem:be_bound}, with $\Phi$ as the CDF of standard Gaussian, $r\sim N(0, \sigma^2_{\psi})$ and the $O_P(\sqrt{n})$ can be made uniform over $\psi^{(0)}$. \eqref{eq:c1_lb_temp} and~\eqref{eq:size_c1j+} imply
	\begin{align}
	\sum_{i \in\cC_1} \psi_i^{(1)}& \geq  \frac{n}{2}\left(1- \Phi\left(-\frac{s_1-\Delta_1}{\sigma_{\psi}}  \right)  \right) (1-e^{-4\tstar\Delta_1}) \notag\\
	& \qquad -  C'n\cdot \frac{\rho_{\psi}}{\sigma_{\psi}^3}-O_P(\sqrt{n})	  \notag\\
	& \geq \frac{n}{2}\left(1-\Phi\left(-\frac{s_1-\Delta_1}{\sigma_{\psi}}  \right)  \right) - \frac{n}{2}e^{-4\tstar\Delta_1}  	\notag\\
	& \qquad -  C'n\cdot \frac{\rho_{\psi}}{\sigma_{\psi}^3}-O_P(\sqrt{n}).	 
	\label{eq:c1_lb}
	\end{align}
	
	Similarly let $J_1^-
	=\{i: r_i^{(0)} < -s_1-\Delta_1\}$, $\Delta_1>0$. For $i\in\cC_1\cap J_1^-$, 
	\begin{align}
	\psi_i^{(1)} & = g(4\tstar(s_1+r_i^{(0)})) \leq g(-4\tstar \Delta_1) \leq e^{-4\tstar \Delta_1}.	\notag
	\end{align}
	We have 
	\begin{align}
	\sum_{i \in\cC_1} \psi_i^{(1)}& \leq |\cC_1\cap J_1^-| e^{-4\tstar \Delta_1} + \frac{n}{2} - |\cC_1\cap J_1^-|	\notag\\
	&=  \frac{n}{2} - |\cC_1\cap J_1^-|(1-e^{-4\tstar \Delta_1}),
	\label{eq:c1_ub_temp}
	\end{align}
	where 
	\begin{align}
	|\cC_1\cap J_1^-| & = \frac{n}{2}P(r_i^{(0)}<-s_1-\Delta_1 ) \mid \psi^{(0)})+O_P(\sqrt{n})    \notag\\
	& \geq  \frac{n}{2} \Phi\left(-\frac{s_1+\Delta_1}{\sigma_{\psi}}  \right) -  C'n\cdot \frac{\rho_{\psi}}{\sigma_{\psi}^3}-O_P(\sqrt{n}).
	\label{eq:size_c1j-}
	\end{align}
	\eqref{eq:c1_ub_temp} and~\eqref{eq:size_c1j-} give
	\begin{align}
	\sum_{i \in\cC_1} \psi_i^{(1)}& \leq \frac{n}{2} - \frac{n}{2}\Phi\left( -\frac{s_1+\Delta_1}{\sigma_{\psi}}  \right)(1-e^{-4\tstar\Delta_1})	\notag\\
	& \qquad   + C'n\cdot \frac{\rho_{\psi}}{\sigma_{\psi}^3}+ O_P(\sqrt{n})	\notag\\
	& \leq  \frac{n}{2} \left( 1-  \Phi\left(-\frac{s_1+\Delta_1}{\sigma_{\psi}}  \right) \right) + \frac{n}{2}e^{-4\tstar\Delta_1}	\notag\\
	& \qquad + C'n\cdot \frac{\rho_{\psi}}{\sigma_{\psi}^3}+ O_P(\sqrt{n}).
	\label{eq:c1_ub}
	\end{align}
	
\end{proof}

\begin{proof}[Proof of Corollary~\ref{cor:pq_noise}]

Let $\hat{t}, \hat{\lambda}$ be constants defined in the usual way in terms of $\hat{p}, \hat{q}$. First we observe using $\hat{p}, \hat{q}$ only replaces $t, \lambda$ with $\hat{t}, \hat{\lambda}$ everywhere in~\eqref{eq:update_knownpq_signalnoise}. Now 
\begin{align*}
    \hat{s}_1 & = (\pstar-\hat{\lambda})\sum_{i\in\cC_1} (\psi_i^{(0)}-1/2)+(\qstar-\hat{\lambda})\sum_{i\in\cC_2} (\psi_i^{(0)}-1/2)  \\
    \hat{s}_2 & = (\qstar-\hat{\lambda})\sum_{i\in\cC_1} (\psi_i^{(0)}-1/2)+(\pstar-\hat{\lambda})\sum_{i\in\cC_2} (\psi_i^{(0)}-1/2)
\end{align*}
We can check the rest of the analysis remains unchanged as long as 
\begin{enumerate}
\item $\frac{\pstar+\qstar}{2} > \hat{\lambda}$, 
\item $\hat{\lambda} - \qstar = \Omega(\rho_n) >0$.
\end{enumerate}

\end{proof}
\bk

\subsection{Proofs of results in Section~\ref{subsec:unknownpq}}

\begin{proof}[Proof of Proposition~\ref{prop:local_max}]
	That the described point is a stationary point is easy to verify, because of the presence of the $(\psi_i - \frac{1}{2})$ terms in the stationarity equations \eqref{eq:first_derivative}. Now, from \eqref{eq:second_derivative}, we see that the Hessian matrix at $ (\frac{1}{2}\bone, \frac{\bone^\top A \bone}{n(n - 1)}, \frac{\bone^\top A \bone}{n(n - 1)}, \frac{1}{2})$ is given by
	\[
	H = \begin{pmatrix}
	-4I & \bzero & \bzero \\
	\bzero^\top &  -\frac{n(n - 1)}{4 \hat{a} (1 - \hat{a})} & 0\\
	\bzero^\top & 0 &  -\frac{n(n - 1)}{4 \hat{a} (1 - \hat{a})}
	\end{pmatrix},
	\]
	where $\hat{a} = \frac{\bone^\top A \bone}{n(n - 1)}.$ Clearly, $H$ is negative definite. This completes the proof.
\end{proof}


\begin{proof}[Proof of Lemma~\ref{lem:unknownpupdate}]
	First note that conditioning on the true labels $Z$, $\E(A|Z)=\tilde{P}$. For notation simplicity, we omit the superscript of $\psi^{(0)}$. For the update of $\phat$, we have
	\ba{
		\phat=&\frac{\psi^T\tilde{P}\psi+(\bone-\psi)^T\tilde{P}(\bone-\psi)}{\psi^T(J-I)\psi+(\bone-\psi)^T(J-I)(\bone-\psi)}\nonumber\\
		 &\qquad\qquad+ \frac{\psi^T(A-\tilde{P})\psi+(\bone-\psi)^T(A-\tilde{P})(\bone-\psi)}{\psi^T(J-I)\psi+(\bone-\psi)^T(J-I)(\bone-\psi)},	
	\label{eq:p1_update} 
}
	where the first term can be written as 
	\bas{
		& \frac{\psi^T(\frac{\pstar+\qstar}{2}u_1u_1^T+\frac{\pstar-\qstar}{2}u_2u_2^T-\pstar I)\psi+(\bone-\psi)^T(\frac{\pstar+\qstar}{2}u_1u_1^T+\frac{\pstar-\qstar}{2}u_2u_2^T-\pstar I)(\bone-\psi)}{\psi^T(u_1u_1^T-I)\psi+(\bone-\psi)^T(u_1u_1^T-I)(\bone-\psi)}\\
		=& \frac{\frac{\pstar+\qstar}{2}n^2(\zeta_1^2 + (1-\zeta_1)^2)+n^2(\pstar-\qstar)\zeta_2^2-\pstar x}{\zeta_1^2 n^2 + (1-\zeta_1)^2 n^2-x}\\
		=& \frac{\pstar+\qstar}{2}+\frac{(\pstar-\qstar)(\zeta_2^2-x/2n^2)}{\zeta_1^2+(1-\zeta_1)^2-x/n^2},
	}
where $x=\psi^T\psi+(\bone-\psi)^T(\bone-\psi)\geq n/4$.
	The second term can be bounded by noting $\E( \psi^T(A-\tilde{P})\psi )=0$ and $\text{Var}( \psi^T(A-\tilde{P})\psi) \leq 2 n(n-1)\pstar$. By Chebyshev's inequality, $ \psi^T(A-\tilde{P})\psi = O_P(\sqrt{\rho_n}n)$. 
	
	This is because
	\begin{align*}
	\E_{\psi,A}[\psi^T(A-\tilde{P})\psi]=\E_{\psi}\E_A[\left.\psi^T(A-\tilde{P})\psi\right|\psi]=0,
	\end{align*}
	and
	\begin{align*}
	\var_{\psi,A}[\psi^T(A-\tilde{P})\psi]&=\E\var (\left.\psi^T(A-\tilde{P})\psi\right|\psi)+\var(\E[\left.\psi^T(A-\tilde{P})\psi\right|\psi])\\
	&=\E\var (\left.\psi^T(A-\tilde{P})\psi\right|\psi)\\
	&=4\E \sum_{i<j}\psi_i\psi_j  \var(A_{ij})\leq 2n(n-1)\pstar.
	\end{align*}


	$(1-\psi)^T(A-\tilde{P})(1-\psi)$ can be handled similarly, and 
	\bas{
		&\psi^T(J-I)\psi+(\bone-\psi)^T(J-I)(\bone-\psi)	\\
		= & \left( \sum_{i}\psi_i \right)^2 + \left(n-\sum_{i}\psi_i\right)^2-\psi^T\psi -(1-\psi)^T(1-\psi)	\\
		\geq & n^2/2-2n, 
	}
	since the first two terms are minimized at $\sum_{i}\psi_i = n/2$.
	
	The result for $q^{(1)}$ is proved analogously.
\end{proof}	

\begin{proof}[Proof of Proposition~\ref{prop:stationary_characterization}]
Let $\psi= \zeta_1u_1+\zeta_2u_2+w$, $w\in \spn\{u_1,u_2\}^\perp$, be a stationary point. We will consider the population version of all the updates and replace $A$ with $\E(A|Z) := \tilde{P}$ and $\rho_n\to 0$. By Lemma~\ref{lem:unknownpupdate}, 
	\begin{align}
	\tilde{p}&=\frac{\pstar+\qstar}{2}+\underbrace{\frac{(\pstar-\qstar)(\zeta_2^2-x/2n^2)}{\zeta_1^2+(1-\zeta_1)^2-x/n^2}}_{\epsilon'_1}, 	\nonumber\\
	\tilde{q}&=\frac{\pstar+\qstar}{2}-\underbrace{\frac{(\pstar-\qstar)(\zeta_2^2+y/2n^2)}{2\zeta_1(1-\zeta_1)-y/n^2}}_{\epsilon'_2}.
	\label{eq:pqupdate_pop}
	\end{align}
In this case, the update equation~\eqref{eq:bcavi_known_pq} becomes
\begin{align}
\xi & = 4\tilde{t} (\tilde{P}- \tilde{\lambda} (J - I))(\psi^{(s)} - \frac{1}{2}\bone)	\notag\\
 & = 4\tilde{t} n \left( \left(\zeta_1-\frac{1}{2}\right)\left(\frac{\pstar+\qstar}{2}-\tilde{\lambda}\right)u_1 + \frac{\pstar-\qstar}{2}\zeta_2u_2 \right)+ 4\tilde{t}(\tilde{\lambda}-\pstar)\left(\psi-\frac{1}{2}\bone\right)		\notag\\
  & := n\tilde{a} + \tilde{b}
 \label{eq:bcavi_stationary}
 \end{align}
where $\tilde{\lambda}$ and $\tilde{t}$ are defined in terms of $\tilde{p}$ and $\tilde{q}$. Since $\psi$ is a stationary point, the above update gives $\psi=g(\xi)$. 

We consider the following cases.

\textbf{Case 1:}  $\zeta_2^2=\Omega(1)$. Since $\zeta_1(1-\zeta_1)\geq\zeta_2^2$, it is easy to see that ~\eqref{eq:pqupdate_pop} implies that $\tilde{p} > \frac{\pstar+\qstar}{2} > \tilde{q}$, thus $\tilde{p}-\tilde{q} = \Omega(\rho_n)$, $\tilde{t}=\Omega(1)$, $\tilde{p} < \tilde{\lambda} < \tilde{q}$. It follows then $\tilde{b}_i=O(\rho_n)$, and $|\tilde{a}_i|=\Omega(\rho_n)$ for $i\in\cC_1$ or $i\in\cC_2$ (or both). In any of these cases, $\| w \| = O(\rho_n\sqrt{n})=o(\sqrt{n})$.

\textbf{Case 2:} $\zeta_2=o(1)$. Note that $\psi^T(\bone-\psi) \geq 0$ implies that $\zeta_1(1-\zeta_1) - \frac{\| w \|^2}{n} \geq \zeta_2^2$. If $\| w \|^2 = o(n)$, we are done. If $\| w \|^2 = \Omega(n)$,  $\zeta_1(1-\zeta_1) =\Omega(1)$. In this case, $\tilde{p}=\frac{\pstar+\qstar}{2}+ O(\rho_n\zeta_2^2)$, and similarly for $\tilde{q}$. It follows then that $\tilde{t}=O(\zeta_2^2)=o(1)$, $\tilde{\lambda}=\frac{\pstar+\qstar}{2}+o(\rho_n)$ (we defer the details to~\eqref{eq:log_ratio_bound}-~\eqref{eq:lambdalb}). Also note that $\tilde{b}_i=O(\rho_n \zeta_2^2)$. When $n|\tilde{a}_i| \gg \tilde{b}_i$, $g(\xi_i) = g(n\tilde{a}_i) + o(1)$. Since $g(n\tilde{a})\in \text{span}\{u_1,u_2\}$, this implies that $\| w \| = o(\sqrt{n})$. When $n|\tilde{a}_i| \asymp \tilde{b}_i$, $\xi_i=o(1)$, and so we have $\| w \| = o(\sqrt{n})$ again.   
 \end{proof}

\begin{proof}[Proof of Lemma~\ref{lem:iidpsi-unknown}]
	Let $\astar=(\pstar+\qstar)/2$. By \eqref{eq:main_decomp}, define $\kappa_1:=4t^{(1)}\left(\zeta_1-\frac{1}{2}\right)(\astar-\lambda^{(1)})$ and $\kappa_2=4t^{(1)}\zeta_2\frac{\pstar-\qstar}{2}$.
	Consider the initial distribution $\psi^{(0)}_i\stackrel{iid}{\sim} f_\mu$, where $f$ is a distribution supported on $(0,1)$ with mean $\mu$.
	Note that we have the following:
	\begin{align}\label{eq:zeta-eqmean-rand}
	\zeta_1&=\frac{(\psi^{(0)})^T \bone}{n}=\mu+O_P(1/\sqrt{n}),\\
	\zeta_2&=\frac{(\psi^{(0)})^T u_2}{n}=O_P(1/\sqrt{n}).\nonumber
	\end{align} 
	
	Now using \eqref{eq:pqupdate}, recall that
		\begin{align}
	\phat&=\frac{\pstar+\qstar}{2}+\underbrace{\underbrace{\frac{(\pstar-\qstar)(\zeta_2^2-x/2n^2)}{\zeta_1^2+(1-\zeta_1)^2-x/n^2}}_{\epsilon'_1} + O_P(\sqrt{\rho_n}/n)}_{\epsilon_1},	\nonumber\\
	\qhat&=\frac{\pstar+\qstar}{2}-\underbrace{\underbrace{\frac{(\pstar-\qstar)(\zeta_2^2+y/2n^2)}{2\zeta_1(1-\zeta_1)-y/n^2}}_{\epsilon'_2} - O_P(\sqrt{\rho_n}/n)}_{\epsilon_2}.
	\label{eq:pqupdateapp}
	\end{align}
	This gives
	\begin{align*}
	\epsilon_1&= \epsilon'_1 + O_P\left(\frac{\sqrt{\rho_n}}{n}\right)=O_P\left(\frac{\rho_n}{n}\right)+O_P\left(\frac{\sqrt{\rho_n}}{n}\right)=O_P\left(\frac{\sqrt{\rho_n}}{n}\right), \\
	\epsilon_2&=\epsilon'_2+ O_P\left(\frac{\sqrt{\rho_n}}{n}\right)=O_P\left(\frac{\sqrt{\rho_n}}{n}\right).
	\end{align*}
	
	We will use the following logarithmic inequalities for $a >\epsilon >0$:
	\begin{align}\label{eq:log_ratio_bound}
	\frac{2\epsilon}{a+\epsilon}\leq \log\frac{a+\epsilon}{a-\epsilon}\leq \frac{2\epsilon}{a-\epsilon}.
	\end{align}
	Now we have 
	\begin{align}
	t^{(1)}&=\frac{1}{2}\left(\log\left(\frac{\astar+\epsilon_1}{\astar-\epsilon_2}\right)+\log\left(\frac{1-\astar+\epsilon_2}{1-\astar-\epsilon_1}\right)\right)\nonumber,\\
	2t^{(1)}&\geq  \frac{\epsilon_1+\epsilon_2}{\astar+\epsilon_1}+\frac{\epsilon_1+\epsilon_2}{1-\astar+\epsilon_2}\geq\frac{(\epsilon_1+\epsilon_2)}{(\astar+\epsilon_1)(1-\astar+\epsilon_2)}\nonumber,\\
	2t^{(1)}&\leq \frac{(\epsilon_1+\epsilon_2)}{(\astar-\epsilon_2)(1-\astar-\epsilon_1)}.\label{eq:t}
	\end{align}
	For $\lambda^{(1)}$, if $\epsilon_1+\epsilon_2\geq 0$, we have
	\begin{align}
	\lambda^{(1)}&=\frac{\log \frac{1-\qhat}{1-\phat}}{\log\frac{\phat}{\qhat}+\log \frac{1-\qhat}{1-\phat}}\leq 
	\left.\frac{\epsilon_1+\epsilon_2}{1-\astar-\epsilon_1}\right/\left(\frac{\epsilon_1+\epsilon_2}{\astar+\epsilon_1}+\frac{\epsilon_1+\epsilon_2}{1-\astar-\epsilon_1}\right)=\astar+\epsilon_1.\\
	\lambda^{(1)}&\geq \left.\frac{\epsilon_1+\epsilon_2}{1-\astar+\epsilon_2}\right/\left(\frac{\epsilon_1+\epsilon_2}{\astar-\epsilon_2}+\frac{\epsilon_1+\epsilon_2}{1-\astar+\epsilon_2}\right)=\astar-\epsilon_2.
	\label{eq:lambdaub}
	\end{align}
	
	If $\epsilon_1+\epsilon_2\leq 0$,
	\begin{align}\label{eq:lambdalb}
	\lambda^{(1)}&=\frac{\log \frac{1-\qhat}{1-\phat}}{\log\frac{\phat}{\qhat}+\log \frac{1-\qhat}{1-\phat}}\geq 
	\left.\frac{\epsilon_1+\epsilon_2}{1-\astar-\epsilon_1}\right/\left(\frac{\epsilon_1+\epsilon_2}{\astar+\epsilon_1}+\frac{\epsilon_1+\epsilon_2}{1-\astar-\epsilon_1}\right)= \astar+\epsilon_1,\\
	\lambda^{(1)}&\leq \left.\frac{\epsilon_1+\epsilon_2}{1-\astar+\epsilon_2}\right/\left(\frac{\epsilon_1+\epsilon_2}{\astar-\epsilon_2}+\frac{\epsilon_1+\epsilon_2}{1-\astar+\epsilon_2}\right)=\astar-\epsilon_2.\nonumber
	\end{align}
The above analysis shows $t^{(1)}=O_P(\frac{1}{n\sqrt{\rho_n}})$, $|a-\lambda^{(1)}|=O_P(\frac{\sqrt{\rho_n}}{n})$.

We next try to generalize the above calculations for any iteration $s$. For convenience we assume $A$ has self loops, which makes no difference to the asymptotics. Note that, for some $|\xi'|<\xi$, since $g''(0)=0$, 
\begin{align*}
\psi=g(\xi)=\frac{1}{2}+\frac{1}{4}\xi+g'''(\xi') \frac{\xi^3}{3!}=\frac{1}{2}+\frac{1}{4}\xi+O(\xi^3)
\end{align*}
using  the fact that $g'''(\xi)=O(1)$ $\forall\xi$. Substituting, we have:
\ba{
\zeta_1^{(s)}&=\frac{1}{n}\left\langle\psi^{(s)},\bone\right\rangle
=\frac{1}{2}+\frac{1}{4n}\left\langle\xi^{(s)},\bone\right\rangle + O\left(\frac{\|(\xi^{(s)})^3\|_2}{\sqrt{n}}\right)	\nonumber\\
&=\frac{1}{2}+\frac{t^{(s)}}{n}\left\langle (A-\lambda^{(s)} J)(\psi^{(s-1)}-\frac{1}{2}\bone),\bone\right\rangle+O\left(\frac{\|(\xi^{(s)})^3\|_2}{\sqrt{n}}\right), 
}
using the update equation for $\xi^{(s)}$ in~\eqref{eq:bcavi_unknown_pq} and assuming $A$ has self loops for convenience. Here using the decomposition $A=P+(A-P)$,
\ba{
\left\langle (P-\lambda^{(s)}J)(\psi^{(s-1)}-\frac{1}{2}\bone), \bone \right\rangle & =  n^2\left(\frac{\pstar+\qstar}{2}-\lambda^{(s)}\right)(\zeta_1^{(s-1)}-1/2)	\nonumber\\
\left\langle (A-P)(\psi^{(s-1)}-\frac{1}{2}\bone), \bone \right\rangle & \leq \sqrt{n}\|A-P\|_{op} \|\psi^{(s-1)}-\frac{1}{2}\bone\|_2	\nonumber\\
& = O_P(\sqrt{n^2\rho_n}) \|\psi^{(s-1)}-\frac{1}{2}\bone\|_2,	\nonumber
}
where the first line follows from $P-\lambda^{(s)}J = \left(\frac{\pstar+\qstar}{2}-\lambda^{(s)}\right) \bone\bone^T + \frac{\pstar-\qstar}{2} u_2u_2^T$.
It follows then
\ba{
& |\zeta_1^{(s)}-\frac{1}{2}|		\nonumber\\
\leq & \frac{4|t^{(s)}|}{n}\left(n^2\left(\frac{\pstar+\qstar}{2}-\lambda^{(s)}\right) |\zeta_1^{(s-1)}-1/2|+O_P(\sqrt{n^2\rho_n})\|\psi^{(s-1)}-\frac{1}{2}\bone\|_2  \right) +O\left(\frac{\|(\xi^{(s)})^3\|_2}{\sqrt{n}}\right)\nonumber\\
= & 4|t^{(s)}|(n\left(\frac{\pstar+\qstar}{2}-\lambda^{(s)}\right)|\zeta_1^{(s-1)}-1/2|+O_P(\sqrt{\rho_n})\|\psi^{(s-1)}-\frac{1}{2}\bone\|_2)+O\left(\frac{\|\xi^{(s)}\|_2^3}{\sqrt{n}}\right)
\label{eq:zeta1_s}
}
since $\|v^3\|_2=\sqrt{\sum_i v_i^6}\leq \|v\|_2\|v\|_\infty^2\leq \|v\|_2^3$ for any $v$. Similarly, we have:
\ba{
	\zeta_2^{(s)}&=\frac{1}{n}\left\langle\psi^{(s)},u_2\right\rangle
	=\frac{1}{4n}\left\langle\xi^{(s)},u_2\right\rangle + O\left(\frac{\|(\xi^{(s)})^3\|_2}{\sqrt{n}}\right),\nonumber\\
	&=\frac{t^{(s)}}{n}\left\langle (A-\lambda^{(s)} J)(\psi^{(s-1)}-\frac{1}{2}\bone),u_2\right\rangle+O\left(\frac{\|(\xi^{(s)})^3\|_2}{\sqrt{n}}\right),\\
	|\zeta_2^{(s)}|&\leq \frac{|t^{(s)}|}{n}\left(\frac{n^2(\pstar-\qstar)}{2}|\zeta_2^{(s-1)}|+O_P(\sqrt{n^2\rho_n})\|\psi^{(s-1)}-\frac{1}{2}\bone\|_2\right) +O\left(\frac{\|(\xi^{(s)})^3\|_2}{\sqrt{n}}\right)\nonumber\\
	&=|t^{(s)}|(O(n\rho_n)|\zeta_2^{(s-1)}|+O_P(\sqrt{\rho_n})\|\psi^{(s-1)}-\frac{1}{2}\bone\|_2)+O\left(\frac{\|\xi^{(s)}\|^3_2}{\sqrt{n}}\right)
	\label{eq:zeta2_s}
}

For the norm of $\xi^{(s)}$,
\ba{
\|\xi^{(s)}\|_2&\leq 4|t^{(s)}|\left(n^{3/2}\left(\left|\left(\frac{\pstar+\qstar}{2}-\lambda^{(s)}\right)(\zeta_1^{(s-1)}-1/2)\right|+O(\rho_n)|\zeta_2^{(s-1)}|\right)	\right. \nonumber\\
	& \qquad\qquad	\left.+O_P(\sqrt{n\rho_n})\|\psi^{(s-1)}-\frac{1}{2}\bone\|_2\right)\nonumber\\
}
using the same eigen-decomposition on $P$.

To bound $t^{(s)}$, we can first define $\epsilon_1^{(s)}$ and $\epsilon_2^{(s)}$ in the same way as~\eqref{eq:pqupdateapp}, where the order terms come from the second part of~\eqref{eq:p1_update} (and an analogous equation for $q^{(1)}$, with general $\psi^{(s-1)}$ replacing $\psi$). Then provided $\zeta_1^{(s-1)}$ is bounded away from 0 and 1, and $\epsilon_1^{(s)}, \epsilon_2^{(s)} = o_P(\rho_n)$, by~\eqref{eq:t},
\ba{
|t^{(s)}|&= O_P(\zeta_2^{(s-1)})^2 + O(\frac{1}{n^2\rho_n})\left( (\psi^{(s-1)})^T(A-P)\psi^{(s-1)} + (\bone-\psi^{(s-1)})^T(A-P)(\bone-\psi^{(s-1)}) \right)	\nonumber\\
& \qquad \qquad + O_P(\frac{1}{n^2\rho_n})(\psi^{(s-1)})^T(A-P)(\bone-\psi^{(s-1)}),
\label{eq:xi_s}
}
where for any $\psi$,
\bas{
	\psi^T(A-P)\psi&=\frac{1}{4}\bone^T(A-P)\bone+\bone^T(A-P)(\psi-\frac{1}{2}\bone)+(\psi-\frac{1}{2}\bone)^T(A-P)(\psi-\frac{1}{2}\bone)\\
	&=O_P(\sqrt{n^2\rho_n})(1+\|\psi-\frac{1}{2}\bone\|_2)+O_P(\sqrt{n\rho_n})\|\psi-\frac{1}{2}\bone\|_2^2	\\
	&= O_P(\sqrt{n^2\rho_n})(1+\|\psi-\frac{1}{2}\bone\|_2)
}
since $\|\psi^{(s-1)}-\frac{1}{2}\bone\|\leq \sqrt{n}$. Similarly
\bas{
	\psi^T(A-P) \bone & = (\psi-\frac{1}{2}\bone)^T(A-P)\bone + \frac{1}{2} \bone^T(A-P)\bone	\\
	& = O_P(\sqrt{n^2\rho_n})(1+\|\psi-\frac{1}{2}\bone\|_2).
}
The upper bound on $t^{(s)}$ becomes:
\begin{align*}
|t^{(s)}|=O_P\left(|\zeta_2^{(s-1)}|^2\right)+O_P\left( \frac{1}{\sqrt{n^2\rho_n}}\right) (1+\|\psi^{(s-1)}-\frac{1}{2}\bone\|_2)
\end{align*}

In a similar way to bound $\lambda^{(s)}$, note that defining general $\epsilon_1^{(s)}, \epsilon_2^{(s)}$ in~\eqref{eq:t}-\eqref{eq:lambdalb}, as long as $\zeta_1^{(s-1)}$ is bounded away from 0 and 1, and $\epsilon_1^{(s)}, \epsilon_2^{(s)} = o_P(\rho_n)$, we have:
\bas{
	\left|\frac{\pstar+\qstar}{2}-\lambda^{(s)}\right|=O_P\left(\rho_n t^{(s)}\right)
}

Finally,
\ba{
\|\psi^{(s)}-\frac{1}{2}\bone\|_2&=\frac{1}{4}\|\xi^{(s)}\|_2+O\left(\|(\xi^{(s)})^3\|_2\right) \nonumber\\
& =\frac{1}{4}\|\xi^{(s)}\|_2 + O\left(\|(\xi^{(s)})\|^3_2\right) 
\label{eq:psi_half_s}
}

For $s=1$, we have the following:
\bas{
	&t^{(1)}=O_P(\frac{1}{n\sqrt{\rho_n}}), \frac{\pstar+\qstar}{2}-\lambda^{(1)}=O_P(\frac{\sqrt{\rho_n}}{n}),\\
	&\|\xi^{(1)}\|_2, 
	\|\psi^{(1)}-\frac{1}{2}\bone\|_2=O_P(1),\\ &|\zeta_1^{(1)}-1/2|,|\zeta_2^{(1)}|=O_P\left(\frac{1}{\sqrt{n}}\right)
}
where the second line follows from~\eqref{eq:xi_s},~\eqref{eq:psi_half_s}, noting $\zeta_1^{(0)}=O_P(1)$, $\zeta_2^{(0)}=O_P(n^{-1/2})$. The last line follows from~\eqref{eq:zeta1_s} and~\eqref{eq:zeta2_s}.

For $s=2$, note that the above bounds imply $\zeta_1^{(1)}$ is bounded away from 0 and 1, and $\epsilon_1^{(s)}, \epsilon_2^{(s)}=o_P(\rho_n)$. Using the same set of equations again, we have:
\bas{	&t^{(2)}=O_P(\frac{1}{n\sqrt{\rho_n}}),\frac{\pstar+\qstar}{2}-\lambda^{(2)}=O_P(\frac{\sqrt{\rho_n}}{n}) \\
&\|\xi^{(2)}\|_2, \|\psi^{(2)}-\frac{1}{2}\bone\|_2
=O_P(\sqrt{\rho_n})\\
&|\zeta_1^{(2)}-1/2|,|\zeta_2^{(2)}|
=O_P\left(\sqrt{\frac{\rho_n}{n}}\right)
}

In general, once $\|\psi^{(s-1)}-\frac{1}{2}\bone\|_2=O_P(1)$,  $|\zeta_1^{(s-1)}-1/2|$ and $|\zeta_2^{(s-1)}|=O_P(1/\sqrt{n})$ we have
$t^{(s)}=O_P(1/n\sqrt{\rho_n})$, $(\pstar+\qstar)/2-\lambda^{(s)}=O_P(\sqrt{\rho_n}/n)$,  $\|\xi^{(s)}\|_2=O_P(\sqrt{\rho_n})$, $|\zeta_1^{(s)}-1/2|,|\zeta_2^{(s)}|$ are both $o_P(1/\sqrt{n})$ and $\|\psi^{(s)}-\frac{1}{2}\bone\|_2=O_P(\sqrt{\rho_n})$. So for all $s\geq 2$, $\|\psi^{(s)}-\frac{1}{2}\bone\|_2=O_P(\sqrt{\rho_n})$.

\end{proof}

\begin{proof}[Proof of Lemma~\ref{lem:goodinitunknown}]
	In this setting, we write $\phat,\qhat$ as follows:
	\begin{align}
	\phat=\pstar-(\pstar-\qstar)\frac{\frac{\zeta_1^2+(1-\zeta_1)^2}{2}-\zeta_2^2}{\zeta_1^2+(1-\zeta_1)^2-x/n^2} + O_P(\sqrt{\rho_n}/n),	\notag\\
	\qhat=\qstar+(\pstar-\qstar)\frac{\zeta_1(1-\zeta_1)-\zeta_2^2-y/n^2}{2\zeta_1(1-\zeta_1)-y/n^2} + O_P(\sqrt{\rho_n}/n).
	\label{eq:pqupdate2}
	\end{align}
	From the proof of Lemma~\ref{lem:iidpsi-unknown}, \eqref{eq:pqupdateapp}, and \eqref{eq:pqupdate2}, we have: $\epsilon_1 , \epsilon_2 <\frac{\pstar+\qstar}{2}$. 

	\bk
	Also note that $\epsilon_1,\epsilon_2= \Omega_P(-(\pstar-\qstar)\zeta_2^2+\sqrt{\rho_n}/n)$.
	Hence, by the same argument as in Lemma~\ref{lem:iidpsi-unknown}, $|(\pstar+\qstar)/2-\lambda^{(1)}| \leq \max (|\epsilon_1|, |\epsilon_2|) = \frac{\pstar-\qstar}{2}+O_P(1/n)$ by~\eqref{eq:pqupdate2}. 
	
	Finally we see that $$t^{(1)}=\Theta(\frac{\epsilon_1+\epsilon_2}{\rho_n})=\Theta\left((\pstar-\qstar)\zeta_2^2/\rho_n\right).$$
	
	In addition, condition~\eqref{eq:museparation} implies that $\zeta_2^2=\Omega_P(1)$,  we see that $t^{(1)}=\Omega_P(1)$ using ~\eqref{eq:t}. 
	
	Next define 
	\begin{align}\label{eq:kappa}
	\kappa_1&=4t^{(1)}(\zeta_1-\frac{1}{2})(\astar-\lambda^{(1)}) \nonumber\\
	\kappa_2&=4t^{(1)}\zeta_2\frac{(\pstar-\qstar)}{2}.
	\end{align}
	Using \eqref{eq:zeta-eqmean} and ~\eqref{eq:kappa}, 
	\begin{align*}
	\kappa_1 + \kappa_2  &= 4t^{(1)}\left( \frac{\mu_1+\mu_2-1}{2}\left(\frac{\pstar+\qstar}{2}-\lambda^{(1)}\right) + \frac{(\mu_1-\mu_2)(\pstar-\qstar)}{4}+O_P(\rho_n/\sqrt{n})\right),\\
	\kappa_1 - \kappa_2 & = 4t^{(1)}\left( \frac{\mu_1+\mu_2-1}{2}\left(\frac{\pstar+\qstar}{2}-\lambda^{(1)}\right) - \frac{(\mu_1-\mu_2)(\pstar-\qstar)}{4} + O_P(\rho_n/\sqrt{n})\right).
	\end{align*}
	
	From \eqref{eq:main_decomp} and adding the noise term from the sample version of the update, 
	\begin{align}
	\xi_i^{(1)}=n(\kappa_1+\sigma_i\kappa_2)+b_i^{(0)} + nr_i^{(0)},
	\label{eq:kappa_r}
	\end{align}
	In~\eqref{eq:kappa_r}, $b_i^{(0)}$ is of smaller order than the other terms and it suffices to consider $n(\kappa_1+\sigma_i\kappa_2+r_i^{(0)})$. 
	Since $|r_i^{(0)}|=O_P\left(\sqrt{\frac{\rho_n\log^2 n}{n}}\right)$ (see proof of Theorem~\ref{thm:samp_known_pq}),  for any  pair $i\in C_1$ and $j\in C_2$ we have 
	\begin{align*}
	(\kappa_1+\kappa_2+r_i^{(0)})&(\kappa_1-\kappa_2+r_j^{(0)})\\
	\leq &(\kappa_1^2-\kappa_2^2)+ O\left(\max(|r_i^{(0)}|,|r_j^{(0)}|)\max(|\kappa_1|,|\kappa_2|)\right)\\
	= & (\kappa_1^2-\kappa_2^2)+O_P\left((\pstar-\qstar)\sqrt{\frac{\rho_n\log^2 n}{n}}\right)\\
	= & (t^{(1)})^2(\pstar-\qstar)^2\left( (\mu_1+\mu_2-1)^2 - (\mu_1-\mu_2)^2 +O_P\left(\frac{1}{\pstar-\qstar}\sqrt{\frac{\rho_n\log^2 n}{n}}\right) \right)<0.
	\end{align*}
	Thus $n(\kappa_1+\kappa_2+r_i^{(0)})$ and $n(\kappa_1-\kappa_2+r_j^{(0)})$, for $i,j$ in different blocks, have opposite signs. We will now check if $n(\kappa_1+\sigma_i\kappa_2+r_i^{(0)})\rightarrow \infty$, and it suffices to lower bound $n(|\kappa_2|-|\kappa_1|-\max_i|r_i^{(0)}|)$.  Since $|\mu_1-\mu_2|\geq 2|\mu_1+\mu_2-1|+O_P\left(\frac{\sqrt{\rho_n\log^2 n/n}}{\pstar-\qstar}\right)$, 
	\begin{align*}
	 n(|\kappa_2|-|\kappa_1|-\max_i|r_i^{(0)}|)
	 &\geq nt^{(1)}\left(|\mu_1-\mu_2|(\pstar-\qstar)-|\mu_1+\mu_2-1|(\pstar-\qstar)-O_P\left(\sqrt{\frac{\rho_n\log^2 n}{n}}\right)\right)\\
	 &\geq nct^{(1)}(\pstar-\qstar) |\mu_1-\mu_2|=\Theta\left(|\mu_1-\mu_2|^3 n\frac{(\pstar-\qstar)^2}{\rho_n}\right),
	\end{align*} 
	for some constant $c$, so as long as $|\mu_1-\mu_2|\geq \left(\frac{\rho_n\log n}{n(\pstar-\qstar)^2}\right)^{1/3}$.
	
	Thus $\kappa_1+\sigma_i\kappa_2 + r_i^{(0)}$ is growing to infinity with an order bounded below by $\Omega_P(\log n)$.
	

	If $n(\kappa_1+\kappa_2 + r_i^{(0)})>0$, since $\psi^{(1)}_i= g(n(\kappa_1+\sigma_i\kappa_2)+b_i^{(0)}  + nr_i^{(0)})$, we have $\psi^{(1)}= \icone+O_P(\exp(-\Omega(\log n)))$. The case $\kappa_1+\kappa_2+ r_i^{(0)}<0$ is similar.
	
\end{proof}

\bibliographystyle{plain}

\begin{thebibliography}{10}
	
	\bibitem{awasthi2015some}
	Pranjal Awasthi and Andrej Risteski.
	\newblock On some provably correct cases of variational inference for topic
	models.
	\newblock In {\em Advances in Neural Information Processing Systems}, pages
	2098--2106, 2015.
	
	\bibitem{bickel2013asymptotic}
	Peter Bickel, David Choi, Xiangyu Chang, and Hai Zhang.
	\newblock Asymptotic normality of maximum likelihood and its variational
	approximation for stochastic blockmodels.
	\newblock {\em The Annals of Statistics}, pages 1922--1943, 2013.
	
	\bibitem{blei2017variational}
	David~M. Blei, Alp Kucukelbir, and Jon~D. McAuliffe.
	\newblock Variational inference: A review for statisticians.
	\newblock {\em Journal of the American Statistical Association},
	112(518):859--877, 2017.
	
	\bibitem{Blei:2003:LDA}
	David~M. Blei, Andrew~Y. Ng, and Michael~I. Jordan.
	\newblock Latent {D}irichlet allocation.
	\newblock {\em J. Mach. Learn. Res.}, 3:993--1022, March 2003.
	
	\bibitem{dempster1977maximum}
	Arthur~P. Dempster, Nan~M. Laird, and Donald~B. Rubin.
	\newblock Maximum likelihood from incomplete data via the {EM} algorithm.
	\newblock {\em Journal of the royal statistical society. Series B
		(methodological)}, pages 1--38, 1977.
	
	\bibitem{erdos1945lemma}
	Paul Erd{\"o}s.
	\newblock On a lemma of littlewood and offord.
	\newblock {\em Bulletin of the American Mathematical Society}, 51(12):898--902,
	1945.
	
	\bibitem{ghorbani2018instability}
	Behrooz Ghorbani, Hamid Javadi, and Andrea Montanari.
	\newblock An instability in variational inference for topic models.
	\newblock {\em arXiv preprint arXiv:1802.00568}, 2018.
	
	\bibitem{giordano2017covariances}
	Ryan Giordano, Tamara Broderick, and Michael~I. Jordan.
	\newblock Covariances, robustness, and variational {B}ayes.
	\newblock {\em arXiv preprint arXiv:1709.02536}, 2017.
	
	\bibitem{hofman2008bayesian}
	Jake~M. Hofman and Chris~H. Wiggins.
	\newblock Bayesian approach to network modularity.
	\newblock {\em Physical review letters}, 100(25):258701, 2008.
	
	\bibitem{holland1983stochastic}
	Paul~W. Holland, Kathryn~Blackmond Laskey, and Samuel Leinhardt.
	\newblock Stochastic blockmodels: First steps.
	\newblock {\em Social networks}, 5(2):109--137, 1983.
	
	\bibitem{Jaakkola:1999:IMF:308574.308663}
	Tommi~S. Jaakkola and Michael~I. Jordon.
	\newblock Improving the mean field approximation via the use of mixture
	distributions.
	\newblock In {\em Learning in Graphical Models}, pages 163--173. MIT Press,
	Cambridge, MA, USA, 1999.
	
	\bibitem{jin2016local}
	Chi Jin, Yuchen Zhang, Sivaraman Balakrishnan, Martin~J. Wainwright, and
	Michael~I. Jordan.
	\newblock Local maxima in the likelihood of gaussian mixture models: Structural
	results and algorithmic conse quences.
	\newblock In {\em Advances in Neural Information Processing Systems}, pages
	4116--4124, 2016.
	
	\bibitem{Jordan:1999:VM}
	Michael~I. Jordan, Zoubin Ghahramani, Tommi~S. Jaakkola, and Lawrence~K. Saul.
	\newblock An introduction to variational methods for graphical models.
	\newblock {\em Mach. Learn.}, 37(2):183--233, November 1999.
	
	\bibitem{lei2015consistency}
	Jing Lei, Alessandro Rinaldo, et~al.
	\newblock Consistency of spectral clustering in stochastic block models.
	\newblock {\em The Annals of Statistics}, 43(1):215--237, 2015.
	
	\bibitem{mei:nonconvex}
	Song Mei, Yu~Bai, and Andrea Montanari.
	\newblock The landscape of empirical risk for non-convex losses.
	\newblock 07 2016.
	
	\bibitem{ng2002spectral}
	Andrew~Y. Ng, Michael~I. Jordan, and Yair Weiss.
	\newblock On spectral clustering: Analysis and an algorithm.
	\newblock In {\em Advances in neural information processing systems}, pages
	849--856, 2002.
	
	\bibitem{pati2017statistical}
	Debdeep Pati, Anirban Bhattacharya, and Yun Yang.
	\newblock On statistical optimality of variational {B}ayes.
	\newblock {\em arXiv preprint arXiv:1712.08983}, 2017.
	
	\bibitem{rohe2011spectral}
	Karl Rohe, Sourav Chatterjee, and Bin Yu.
	\newblock Spectral clustering and the high-dimensional stochastic blockmodel.
	\newblock {\em The Annals of Statistics}, pages 1878--1915, 2011.
	
	\bibitem{vu17162}
	Flemming Tops{\o}e.
	\newblock Some bounds for the logarithmic function.
	\newblock {\em Research report collection}, 7(2), 2004.
	
	\bibitem{wainwright2008graphical}
	Martin~J. Wainwright and Michael~I. Jordan.
	\newblock Graphical models, exponential families, and variational inference.
	\newblock {\em Found. Trends Mach. Learn.}, 1(1-2):1--305, January 2008.
	
	\bibitem{wang2004convergence}
	Bo~Wang and D.~M. Titterington.
	\newblock Convergence and asymptotic normality of variational {B}ayesian
	approximations for exponential family models with missing values.
	\newblock In {\em Proceedings of the 20th conference on Uncertainty in
		artificial intelligence}, pages 577--584. AUAI Press, 2004.
	
	\bibitem{wang2005inadequacy}
	Bo~Wang and D.~M. Titterington.
	\newblock Inadequacy of interval estimates corresponding to variational
	{B}ayesian approximations.
	\newblock In {\em AISTATS}, 2005.
	
	\bibitem{wang2006convergence}
	Bo~Wang and D.~M. Titterington.
	\newblock Convergence properties of a general algorithm for calculating
	variational {B}ayesian estimates for a normal mixture model.
	\newblock {\em Bayesian Analysis}, 1(3):625--650, 2006.
	
	\bibitem{wang2017frequentist}
	Yixin Wang and David~M. Blei.
	\newblock Frequentist consistency of variational {B}ayes.
	\newblock {\em arXiv preprint arXiv:1705.03439}, 2017.
	
	\bibitem{westling2015beyond}
	Ted Westling and Tyler~H. McCormick.
	\newblock Beyond prediction: A framework for inference with variational
	approximations in mixture models.
	\newblock {\em arXiv preprint arXiv:1510.08151}, 2015.
	
	\bibitem{xu2016global}
	Ji~Xu, Daniel~J. Hsu, and Arian Maleki.
	\newblock Global analysis of expectation maximization for mixtures of two
	gaussians.
	\newblock In D.~D. Lee, M.~Sugiyama, U.~V. Luxburg, I.~Guyon, and R.~Garnett,
	editors, {\em Advances in Neural Information Processing Systems 29}, pages
	2676--2684. Curran Associates, Inc., 2016.
	
	\bibitem{zhang2017theoretical}
	Anderson~Y. Zhang and Harrison~H. Zhou.
	\newblock Theoretical and computational guarantees of mean field variational
	inference for community detection.
	\newblock {\em arXiv preprint arXiv:1710.11268}, 2017.
	
	\bibitem{zhang2017convergence}
	Fengshuo Zhang and Chao Gao.
	\newblock Convergence rates of variational posterior distributions.
	\newblock {\em arXiv preprint arXiv:1712.02519}, 2017.
	
\end{thebibliography}

\end{document}